\documentclass[10pt]{article}

\usepackage{latexsym,amsfonts,amsmath,amsthm,amssymb, epsfig}
\usepackage[a4paper,total={170mm,250mm},vcentering,dvips]{geometry}
\usepackage{graphicx}%
\usepackage{mathrsfs}
\usepackage[normalem]{ulem}
\usepackage[final]{changes}
 
\usepackage{yhmath}
\usepackage{dsfont}
\usepackage{enumitem}
\usepackage{csquotes}
\usepackage{bm}
\usepackage[pdftex,dvipsnames]{xcolor}
\usepackage{float}
\usepackage{caption}
\usepackage{parskip} 
\usepackage{algpseudocode}
\usepackage{algorithm}
\usepackage{subcaption}

\usepackage[hidelinks]{hyperref}
\usepackage[noabbrev,capitalize]{cleveref}
\usepackage{tikz}
\usetikzlibrary{positioning}

\renewcommand{\autoref}{\cref}

\newtheorem{thm}{Theorem}[section]
\newtheorem{cor}[thm]{Corollary}

\newcommand{\R}{\mathbb{R}}

\newcommand{\N}{\mathbb{N}}

\newcommand{\Zdpos}{\Z^d_{+\!\!>0}}
\newcommand{\bit}{\begin{itemize}}
\newcommand{\eit}{\end{itemize}}

\newcommand{\connie}[1]{{{\color{magenta}{#1}}}}

\newcommand{\rev}[1]{{\color{blue}{#1}}}

\providecommand{\U}[1]{\protect\rule{.1in}{.1in}}
\numberwithin{equation}{section}
\newtheorem{theorem}{Theorem}[section]

\newtheorem{corollary}[theorem]{Corollary}

\newtheorem{definition}[theorem]{Definition}
\newtheorem{lemma}[theorem]{Lemma}

\newtheorem{proposition}[theorem]{Proposition}
\newtheorem*{proposition*}{Proposition}
\newtheorem{remark}[theorem]{Remark}
\newtheorem{example}[theorem]{Example}

\newtheorem{defn}[theorem]{Definition}
\newtheorem{prop}[theorem]{Proposition}

\newcommand{\bpr}{\begin{prop}}
\newcommand{\beq}{\begin{equation}}
\newcommand{\bd}{\begin{defn}}
\newcommand{\bex}{\begin{example}}
\newcommand{\bc}{\begin{cor}}
\newcommand{\bl}{\begin{lemma}}
\newcommand{\bt}{\begin{theorem}}
\newcommand{\br}{\begin{remark}}
\newcommand{\epr}{\end{prop}}
\newcommand{\eeq}{\end{equation}}
\newcommand{\ed}{\end{defn}}
\newcommand{\eex}{\end{example}}
\newcommand{\ec}{\end{cor}}
\newcommand{\el}{\end{lemma}}
\newcommand{\et}{\end{theorem}}
\newcommand{\er}{\end{remark}}

\newcommand{\C}{\mathbb C}

\newcommand{\Z}{ \mathbb{Z} }

\begin{document}

\title{Schauder and Riesz Bases for Higher-Dimensional Lebesgue Spaces Inspired by ReLU Neural Networks}
\author{
Julian Billner\footnote{Friedrich-Alexander-Universit\"at Erlangen-N\"urnberg, Mathematics Department, Cauerstr. 11, 91058 Erlangen, Germany.
Email: \href{mailto:julian.jb.billner@fau.de}{julian.jb.billner@fau.de}}
 \  and  
Cornelia Schneider\footnote{Friedrich-Alexander-Universit\"at Erlangen-N\"urnberg, Mathematics Department, Cauerstr. 11, 91058 Erlangen, Germany.
Email: \href{mailto:cornelia.schneider@fau.de}{cornelia.schneider@fau.de}}
}
\maketitle

\vskip1.5cm

\begin{abstract}
\noindent Motivated by the role of piecewise linear functions in ReLU neural networks, we study systems of dilated functions in the Lebesgue spaces $L_q([0,1]^d)$, $q\in(1,\infty)$. For a sequence $\tau\in\ell_1$ we consider the operator $T_\tau g(x)=\sum_{k=1}^\infty \tau_k g(kx)$ and the function system $\mathcal Z^\tau$ obtained by applying it to the trigonometric system. We show that $\tau\mapsto T_\tau$ is an isomorphism of algebras from $\ell_1$ with the Dirichlet convolution as multiplication onto an algebra of operators and connect the inverse of $T_\tau$ to the inverse of $\tau$ with respect to the Dirichlet convolution. We give explicit conditions on $\tau$, under which the resulting system $\mathcal Z^\tau$ is a Schauder basis of $L_q([0,1]^d)$ and, for $q=2$, a Riesz basis for which we estimate the Riesz constants in terms of $\tau$. We characterise the Riesz bases of this type in terms of a sequence space of Dirichlet convolution multipliers, and we treat mixed systems of cosine and sine functions with different sequences. Finally, we apply these results to the system generated by two piecewise linear functions inspired by ReLU networks and improve the known lower Riesz constant to $2/3$, independently of the dimension. \\[0.1cm]
\noindent{\em Key Words:} Schauder basis, Riesz basis, dilation systems, Dirichlet convolution, ReLU neural networks, piecewise linear functions, trigonometric series, Lebesgue spaces.\\
{\em 2020 Mathematics Subject Classification: 
Primary 46A35, 46B15, 11M41;
Secondary 42A16, 42A32, 42C15, 68T07.} 
\end{abstract}



\section{Introduction}\label{sect-1}

Since the middle of the twentieth century artificial neural networks have been the subject of intensive research, and in recent years they have become omnipresent in machine learning applications such as image and audio processing, where they often outperform conventional algorithms. A rigorous mathematical explanation of this success is, however, still lacking. A first step is to understand which functions can be represented or approximated efficiently by networks of a given size, and how this depends on the depth. For the ReLU activation function $x\mapsto\max\{x,0\}$ a key observation is that compositions of piecewise linear functions generate functions with a rapidly growing number of linear pieces. Iterated hat functions (sawtooth functions) were used in \cite{Telgarsky2016} to construct functions which deep networks represent with few parameters while shallow networks cannot, and in \cite{Yarotsky2017} to approximate the square function, whence products, and thereby to derive approximation rates for Sobolev functions by deep ReLU networks. We refer to \cite{DeVoreHaninPetrova2021} for a survey on neural network approximation.

Building on these ideas, Daubechies, DeVore, Foucart, Hanin and Petrova \cite{mathcalCS} studied a sequence of piecewise linear functions that can be reproduced by ReLU networks and lead to a ReLU analogue of classical Fourier analysis. This line of research was continued in \cite{2,4}, also for multivariate functions. To describe these results we recall some basic notions.

\begin{definition}[\textbf{Schauder and Riesz basis}] \label{Def RS} \
\begin{enumerate}[label=(\roman*)]
    \item A sequence $(x_n)_{n \in \N}$ of elements of a real Banach space $(X, \|\cdot\|)$ is called Schauder basis (of $X$) if for all $x \in X$ there is a unique sequence $(\alpha_n)_{n \in \N}\subset \mathbb{R}$  such that $$x= \sum \limits_{n=1}^\infty \alpha_n x_n,$$ where the convergence of the series is to be considered with respect to $\|\cdot \|.$
    \item A Schauder basis $(x_n)_{n \in \N}$ of a real Hilbert space $(H, \|\cdot\|)$ is called a Riesz basis (of $H$) if there are  Riesz constants $A\leq B \in (0, \infty)$ such that $$A \|\alpha\|_{\ell_2}^2 = A \sum \limits_{n=1}^\infty \alpha_n^2 \leq \left\|\sum \limits_{n=1}^\infty \alpha_n x_n \right\|^2 \leq B \sum \limits_{n=1}^\infty \alpha_n^2 =B \|\alpha\|_{\ell_2}^2$$ holds for all square-summable sequences $\alpha \in \ell_2.$
\end{enumerate}
\end{definition}

Next we introduce a pair of piecewise linear functions that build the base for the system introduced in \cite{DeVoreHaninPetrova2021}.

\begin{definition}[\textbf{$\mathcal C, \mathcal S$}]\label{Zacken}
    We denote by $\mathcal C$ and $\mathcal S$ the 1-periodic extensions to $\R$ of
    $$\mathcal C, \mathcal S: [0,1) \to \R, \quad \mathcal C(x):=4\left|x-\frac 1 2\right|-1, \quad \mathcal S(x):= \left|2-4\left|x-\frac 1 4\right|\right|-1.$$
\end{definition}

\begin{figure}[h]
\begin{subfigure}[c]{0.5\textwidth}
\centering
\includegraphics[width=0.7\textwidth]{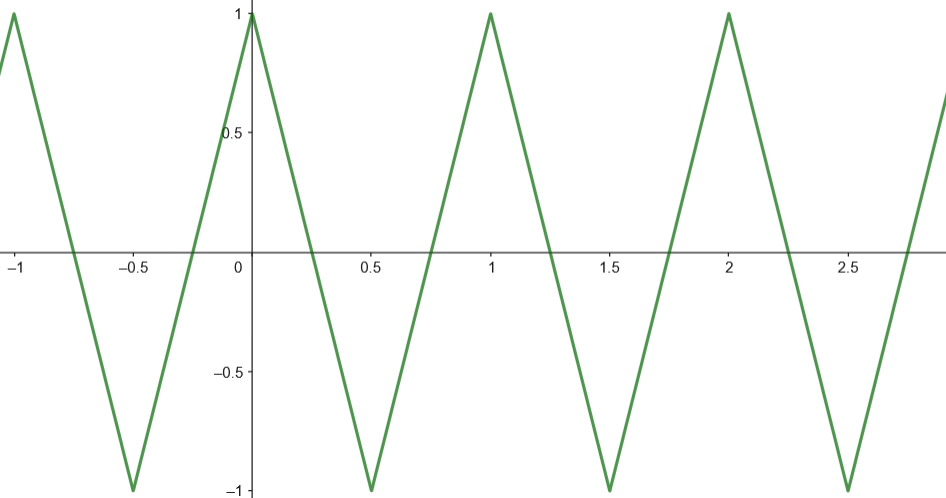}
\end{subfigure}
\begin{subfigure}[c]{0.5\textwidth}
\centering
\includegraphics[width=0.7\textwidth]{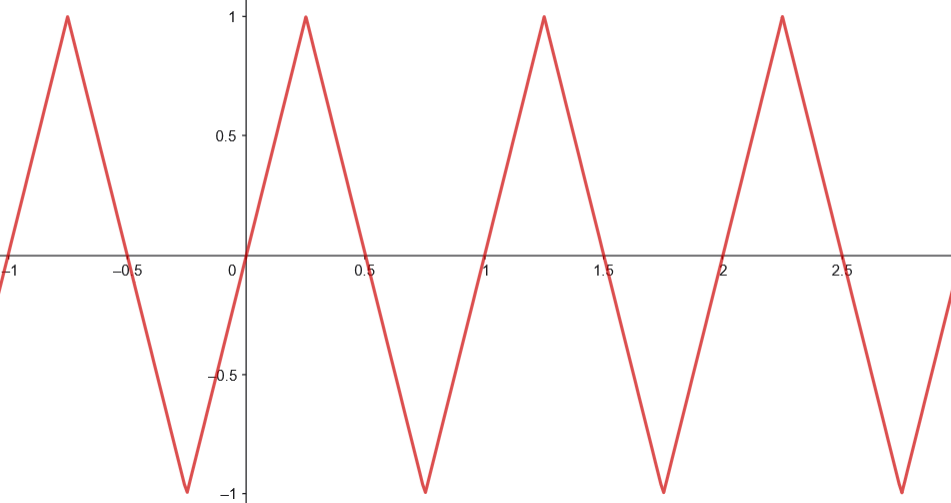}
\end{subfigure}
\caption{Plots of $\mathcal C$ (green) and $\mathcal S$ (red)}
\end{figure}

As was done with $\mathcal C$ and $\mathcal S$ in the previous definition, we extend any function $f$ defined on $[0,1)^d$ 1-periodically to $\R^d$ by setting $f(x_1, \dots, x_d):=f(x_1 - \lfloor x_1 \rfloor, \dots, x_d - \lfloor x_d \rfloor)$. Further, given a map $f:[0,1) \to \R$ (extended to $\R$ in this fashion) and a vector $\alpha \in \Z^d$ we denote by $f_\alpha$ the map \begin{equation} \label{subvector}
    f_\alpha: \R^d \to \R, \ f_\alpha(x):=f(\alpha^Tx),
\end{equation} where $\alpha^Tx = \sum_{i=1}^d \alpha_i x_i$ is the Euclidean scalar product on $\R^d.$ With these conventions we consider the function system \begin{equation} \label{Rd}
    \mathcal R_d := \{1\} \cup \{\sqrt 3 \mathcal C_J, \sqrt 3 \mathcal S_J: J \in \Zdpos\},
\end{equation}
where $\Zdpos:=\{J\in\Z^d\setminus\{0\}: J_k>0 \text{ for } k=\min\{i:J_i\neq 0\}\}$ denotes the set of integer vectors whose first non-zero entry is positive.

\textbf{Known results for $\mathcal R_d$.} The system $\mathcal R_1$ was introduced in \cite{mathcalCS}, where it was shown to be a Riesz basis of $L_2([0,1])$ with $A=\frac 12$ and $B=\frac 32$. With a different method of proof, the analogous statement for $\mathcal R_d$ in $L_2([0,1]^d)$ was established in \cite{2}. Remarkably, the multivariate system still consists of piecewise linear functions and the Riesz constants do not depend on the dimension, which avoids the curse of dimensionality. Both papers rely on the Hilbert space structure of $L_2([0,1]^d)$ and on lengthy computations of Fourier coefficients. In \cite{4} a third approach was proposed: $\mathcal R_d$ is written as the image of the trigonometric system under a bounded linear operator with bounded inverse. This approach extends to Banach spaces and shows that $\mathcal R_d$ is a Schauder basis of $L_q([0,1]^d)$ for all $q\in(1,\infty)$. Moreover, it improves the lower Riesz constant to $\frac{96}{\pi^4}\big(2-\frac{\pi^2}8\big)^2 \approx 0.5787$, again independently of $d$.

\textbf{Aim of this paper.} The approach of \cite{4} is not restricted to $\mathcal C$ and $\mathcal S$. The aim of this paper is to develop it into a general theory, which answers the question for which pairs of functions, the system of all their integer dilations is a Schauder basis of $L_q([0,1]^d)$ or a Riesz basis of $L_2([0,1]^d)$. To this end, given a sequence $\tau \in \ell_1$, we consider the linear operator \begin{equation}\label{DefTtau}
    T_\tau: L_q([0,1]^d) \to L_q([0,1]^d), \quad T_\tau g(x) := \sum \limits_{k=1}^\infty \tau_k g(kx)
\end{equation}
and the function system \begin{equation}\label{mathcalZ} \mathcal Z^\tau := \{\tau_0\} \cup \{C^\tau_J, S^\tau_J: J \in \Zdpos\} \subset L_q([0,1]^d), \end{equation} where \begin{equation} \label{Funktionentau} \tau_0=\sum \limits_{k=1}^\infty \tau_k, \quad C^\tau := \sqrt 2 \sum \limits_{k=1}^\infty \tau_k \cos( 2 \pi k\cdot), \quad \text{and} \quad S^\tau := \sqrt 2 \sum \limits_{k=1}^\infty \tau_k \sin( 2 \pi k\cdot). \end{equation}
Here, $kx=(kx_1,\dots,kx_d)$ is the usual scalar multiplication, functions on $[0,1]^d$ are extended $1$-periodically as is described above, and $C^\tau_J(x)=C^\tau(J^Tx)$, $S^\tau_J(x)=S^\tau(J^Tx)$ are defined in the sense of \eqref{subvector}. With $c(x):=\cos(2\pi x)$, $s(x):=\sin(2\pi x)$ and $e:=(1,0,0,\dots)$ we have $\tau_0=1$, $C^e_J=\sqrt 2c_J$ and $S^e_J=\sqrt 2s_J$, so $\mathcal Z^e$ is the trigonometric system, and $\mathcal Z^\tau$ is its image under $T_\tau$. The basis properties of $\mathcal Z^\tau$ are therefore determined by properties of the operator $T_\tau$. The system $\mathcal R_d$ is of this type, except that its cosine and sine parts are generated by two different sequences.
Lastly, for all $n \in \N$ we introduce the shorthand notation $G_n$ for $T_{e_n}=:G_n,$ where $(e_n)_k:=\delta_{n,k},$ that is \begin{equation} \label{Gk}
    G_n: L_q([0,1]^d) \to L_q([0,1]^d), \quad G_n g(x)=g(nx).
\end{equation} For $\tau \in \ell_1$ the operator $T_\tau$ is now equivalently given by $T_\tau = \sum \limits_{k=1}^\infty \tau_k G_k.$

For $d>1$ there is no natural way to order the system $\mathcal Z^\tau$ in order to arrive at a sequence. To deal with this issue we consider series that are indexed by $J\in \Zdpos$ and converge in the  Pringsheim sense and pick an enumeration of the underlying index set accordingly. We denote by $\ell_2(\Zdpos)$ the space of square-summable sequences with indices in $\Zdpos$ and set $\|a\|_{\ell_2}=\Big(\sum_{J \in \Zdpos} a_J^2\Big)^{1/2}.$

\textbf{Main results.} Our first result is algebraic: if $\ell_1$ is equipped with the Dirichlet convolution as multiplication, then $\tau\mapsto T_\tau$ is an isomorphism of algebras onto the abelian algebra $\{T_\tau:\tau\in\ell_1\}$ (Theorem~\ref{AlgTtau}). Consequently, if $\tau_1\neq0$ and the Dirichlet inverse $\tau^{-1}$ belongs to $\ell_1$, then $T_\tau$ is invertible with $T_\tau^{-1}=T_{\tau^{-1}}$ (Theorem~\ref{InvTtau}). This connects the question of bases to multiplicative number theory and leads to criteria on $\tau$ that are easy to verify. The following theorem summarises the main consequences. 

\begin{theorem}[Main result, informal version]\label{MainInformal}
    Let $q\in(1,\infty)$, $d\in\N$ and $\tau\in\ell_1$ such that $\|\tau\|_{\ell_1}<2|\tau_1|$, or such that $\tau_1\neq0$ and $\frac1{\tau_1}\tau$ is completely multiplicative. Let $\tau^{-1}$ denote the Dirichlet inverse of $\tau$.
    \begin{enumerate}[label=(\roman*)]
        \item $\mathcal Z^\tau$ is a Schauder basis of $L_q([0,1]^d)$. If $q=2$, it is a Riesz basis with Riesz constants $A=\|\tau^{-1}\|_{\ell_1}^{-2}$ and $B=\|\tau\|_{\ell_1}^2.$
        \item If $\lambda$ is another sequence with the same properties, with Riesz constants $A_\lambda,B_\lambda$, and $k_0\neq0$, then the mixed system $\{k_0\}\cup\{C^\tau_J,S^\lambda_J:J\in\Zdpos\}$ is a Schauder basis of $L_q([0,1]^d)$, and for $q=2$ a Riesz basis with lower constant $\min\{k_0^2,A,A_\lambda\}$ and upper constant $\max\{k_0^2,B,B_\lambda\}$.
        \item In particular, $\mathcal R_d$ is a Schauder basis of $L_q([0,1]^d)$, and a Riesz basis of $L_2([0,1]^d)$ with Riesz constants $A=\frac23$ and $B=\frac32$, independently of $d$.
    \end{enumerate}
\end{theorem}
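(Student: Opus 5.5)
The plan is to realize every system in the statement as the image of the (real, suitably normalized) trigonometric system under a bounded invertible operator on $L_q([0,1]^d)$. Basis properties then transfer from the trigonometric system, and the norms of the operator and its inverse give the Riesz constants.

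\textbf{Step 1: the operators $G_n$ and $T_\tau$.} For $n\in\N$ the map $x\mapsto nx$ on the torus $\R^d/\Z^d$ is measure preserving. Hence $\|G_n g\|_{L_q}=\|g\|_{L_q}$, so the series $T_\tau=\sum_k\tau_kG_k$ converges absolutely in operator norm and $\|T_\tau\|\le\|\tau\|_{\ell_1}$. Since $G_mG_n=G_{mn}$, we get
\[
T_\tau T_\lambda=\sum_{m,n}\tau_m\lambda_nG_{mn}=T_{\tau*\lambda},
\]
where $*$ is the Dirichlet convolution; this is Theorem~\ref{AlgTtau}. Moreover $\|\tau*\lambda\|_{\ell_1}\le\|\tau\|_{\ell_1}\|\lambda\|_{\ell_1}$, so $(\ell_1,*)$ is a unital Banach algebra with unit $e$. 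By Theorem~\ref{InvTtau}, whenever $\tau^{-1}\in\ell_1$ we have $T_\tau^{-1}=T_{\tau^{-1}}$ and $\|T_\tau^{-1}\|\le\|\tau^{-1}\|_{\ell_1}$.

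\textbf{Step 2: the two hypotheses give $\tau^{-1}\in\ell_1$.} If $\|\tau\|_{\ell_1}<2|\tau_1|$, write $\tau=\tau_1(e-\sigma)$ with $\|\sigma\|_{\ell_1}<1$. The Neumann series $\tau_1^{-1}\sum_j\sigma^{*j}$ then converges in $\ell_1$ and is the Dirichlet inverse. If $\tau_1^{-1}\tau$ is completely multiplicative, its Dirichlet inverse is $\mu(n)\tau_n/\tau_1$ (with $\mu$ the M\"obius function). This is dominated termwise by $|\tau_n/\tau_1|$, so it lies in $\ell_1$.

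\textbf{Step 3: proof of (i).} A direct computation gives
\[
T_\tau(\sqrt2c_J)(x)=\sqrt2\sum_k\tau_k\cos(2\pi kJ^Tx)=C^\tau_J,
\]
and likewise $T_\tau(\sqrt2s_J)=S^\tau_J$ and $T_\tau1=\tau_0$. The real trigonometric system, with the enumeration compatible with Pringsheim (rectangular) partial sums, is a Schauder basis of $L_q([0,1]^d)$ for $1<q<\infty$ by the boundedness of the rectangular Fourier partial sum projections (M.~Riesz). An isomorphism maps Schauder bases to Schauder bases, which gives the first claim. For $q=2$ the trigonometric system is orthonormal, so with $f=\sum\alpha_n x_n$ in that basis,
\[
\|T_\tau f\|\le\|\tau\|_{\ell_1}\|\alpha\|_{\ell_2},\qquad \|\alpha\|_{\ell_2}=\|f\|\le\|\tau^{-1}\|_{\ell_1}\|T_\tau f\|.
\]
These are exactly the Riesz constants $A=\|\tau^{-1}\|_{\ell_1}^{-2}$ and $B=\|\tau\|_{\ell_1}^2$.

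\textbf{Step 4: proof of (ii).} Use the decomposition $L_q=\mathrm{span}\{1\}\oplus E_0\oplus O$, where $E_0$ consists of the even functions with mean zero and $O$ of the odd functions. The projections onto these pieces are bounded, since they are built from the reflection $x\mapsto-x$ and the mean. Each $G_n$ preserves parity and mean zero, so $T_\tau$ and $T_\lambda$ leave $E_0$ and $O$ invariant. Define
\[
U:=k_0P_1+T_\tau P_{E_0}+T_\lambda P_O .
\]
This is bounded, with inverse $k_0^{-1}P_1+T_{\tau^{-1}}P_{E_0}+T_{\lambda^{-1}}P_O$, and it maps the trigonometric system onto the mixed system. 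For $q=2$ the three pieces are mutually orthogonal and preserved by $U$, so the Riesz constants are the minimum and maximum of the constants of the individual pieces.

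\textbf{Step 5: proof of (iii).} The Fourier expansions of the two functions are
\[
\mathcal C(x)=\frac8{\pi^2}\sum_{k\text{ odd}}\frac{\cos 2\pi kx}{k^2},\qquad
\mathcal S(x)=\frac8{\pi^2}\sum_{k\text{ odd}}(-1)^{(k-1)/2}\frac{\sin 2\pi kx}{k^2}.
\]
Hence $\sqrt3\,\mathcal C=C^\tau$ and $\sqrt3\,\mathcal S=S^\lambda$ with $\tau_k=\frac{4\sqrt6}{\pi^2}\frac{\mathbf 1_{k\text{ odd}}}{k^2}$ and $\lambda_k=\chi_4(k)\tau_k$, where $\chi_4$ is the nontrivial character mod $4$. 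Both normalized sequences are completely multiplicative. We have $\|\tau\|_{\ell_1}=\|\lambda\|_{\ell_1}=\frac{4\sqrt6}{\pi^2}\cdot\frac{\pi^2}8=\frac{\sqrt6}2$, so $B=\frac32$. The inverses have equal norms:
\[
\|\tau^{-1}\|_{\ell_1}=\frac{\pi^2}{4\sqrt6}\prod_{p\text{ odd}}(1+p^{-2})=\frac{\pi^2}{4\sqrt6}\cdot\frac{\zeta(2)}{\zeta(4)}\cdot\frac45=\frac{\pi^2}{4\sqrt6}\cdot\frac{12}{\pi^2}=\frac3{\sqrt6},
\]
so $A=\frac23$. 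With $k_0=1$, part (ii) gives the constants $\min\{1,\tfrac23\}=\tfrac23$ and $\max\{1,\tfrac32\}=\tfrac32$.

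\textbf{Expected obstacle.} The main delicate point is not the algebra but the Schauder basis claim for $d>1$. One must fix an enumeration of $\{1\}\cup\{c_J,s_J\}$ for which the partial sum projections are uniformly bounded on $L_q$, via the rectangular partial sums. One must also check that pairing $c_J$ and $s_J$ for $J\in\Zdpos$ is compatible with these projections, so that the real system inherits the basis property of the complex exponentials. I would handle this by grouping $c_J,s_J$ together with $e^{\pm2\pi iJ^Tx}$ and using the boundedness of the multidimensional rectangular partial sums.
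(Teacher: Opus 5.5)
Your proposal is correct. For (i) and (iii) it follows the paper's argument: the isometries $G_n$, the identity $T_\tau T_\lambda=T_{\tau*\lambda}$, the Neumann series of Remark~\ref{DirichletNeumann}, the inverse $\tau_1^{-2}\mu\cdot\tau$, transfer of the trigonometric basis as in Lemma~\ref{Mainbases}, and the same Fourier coefficients of $\sqrt3\,\mathcal C$ and $\sqrt3\,\mathcal S$. For the $d$-dimensional Schauder basis property you flag as the obstacle, the paper simply cites \cite{4}. Your Euler product $\prod_{p>2}(1+p^{-2})=\frac45\cdot\frac{\zeta(2)}{\zeta(4)}$ is a shortcut for the paper's detour via the Liouville function.

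Part (ii) is where your route genuinely differs. The paper's proof of Theorem~\ref{MixedZystems} builds no operator. It splits $g=g_0+g_++g_-$ and expands $g_+$ in $\mathcal Z^\tau$ and $g_-$ in $\mathcal Z^\lambda$. It then symmetrises with $R$ to discard the unwanted sine (resp.\ cosine) terms and gets uniqueness from the coefficient formula of Proposition~\ref{Coeff}(ii). The Riesz bounds come from the orthogonality of constants, even functions and odd functions.

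You instead use that $T_\tau$ and $T_\lambda$ commute with the mean and with $R$. Hence $U=k_0P_1+T_\tau P_{E_0}+T_\lambda P_O$ is block diagonal, with explicit inverse $k_0^{-1}P_1+T_{\tau^{-1}}P_{E_0}+T_{\lambda^{-1}}P_O$. For $k_0=\tau_0$ this $U$ is exactly the operator $T_{\frac{\tau+\lambda}2}+T^R_{\frac{\tau-\lambda}2}$ of Lemma~\ref{MixOp}, and your inverse is what Lemma~\ref{MixAlg}(iii) produces. The paper develops those lemmas but does not use them in its proof of (ii).

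Your route reduces (ii) to the same transfer principle as (i), works uniformly in every $L_q$, and makes the coefficient functionals explicit. The price is that you need $\tau^{-1},\lambda^{-1}\in\ell_1$, i.e.\ bijectivity of $T_\tau$ and $T_\lambda$. The paper's argument only uses that $\mathcal Z^\tau$ and $\mathcal Z^\lambda$ are Schauder (resp.\ Riesz) bases. Under the hypotheses of the informal theorem this makes no difference.
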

The precise statements are Theorems~\ref{Basiscrit}, \ref{MixedZystems} and~\ref{Rdbasis}. Part~(iii) improves the lower Riesz constants $\frac12$ from \cite{mathcalCS,2} and $0.5787\dots$ from \cite{4}.

We complement these sufficient conditions by necessary ones (Lemmas~\ref{SurjTtau} and~\ref{Hilbert}), and for $q=2$ we characterise the Riesz bases of this type by means of the space $\hat\ell_d$ of Dirichlet convolution multipliers (Proposition~\ref{ellhat}).

\textbf{Related work.} Systems of dilated functions $\{\varphi(nx)\}_{n\in\N}$ have been studied extensively in connection with Dirichlet series. For $\varphi\in L_2(0,1)$, extended to $\R$ as an odd $2$-periodic function, Hedenmalm, Lindqvist and Seip \cite{Seip} showed that $\{\varphi(nx)\}_{n\in\N}$ is a Riesz basis of $L_2(0,1)$ if and only if the Dirichlet series formed by the sine coefficients of $\varphi$ and its reciprocal are multipliers of the Hilbert space $\mathcal H^2$ of Dirichlet series with square-summable coefficients. Since these multipliers are exactly the bounded analytic Dirichlet series in the half-plane $\operatorname{Re}s>0$, the condition can be expressed by the size of a function in that half-plane. Antezana, Carando and Scotti \cite{ACS21} split this condition into separate conditions for the upper and the lower Riesz bound and treated dilation systems $\{\varphi(mx,ny)\}_{m,n\in\N}$ in two variables. In \cite{Nikolski2012}, the completeness of dilation systems is related to cyclic vectors in Hardy spaces on the infinite-dimensional polydisc, and in a non-periodic version to the Riemann hypothesis. On the algebraic side, $\ell_1$ with the Dirichlet convolution is isomorphic to the Banach algebra of absolutely convergent Dirichlet series. By a theorem of Hewitt and Williamson \cite[Thm. 1]{HewittWilliamson1957}, which is the analogue of Wiener's theorem on absolutely convergent Fourier series, $\tau\in\ell_1$ has a Dirichlet inverse in $\ell_1$ if and only if $\sum_{n}\tau_n n^{-s}$ is bounded away from zero in absolute value on $\operatorname{Re}s\geq0$. Together with Theorem~\ref{InvTtau}, $T_\tau$ is therefore invertible whenever this Dirichlet series is bounded away from zero, see Corollary~\ref{CorHW}, and the conditions in Theorem~\ref{MainInformal} are easily verifiable sufficient conditions for this.

Compared with these works, our setting differs in four respects. 
First, we work with $1$-periodic generating functions and use two of them, a cosine-type function $C^\tau$ and a sine-type function $S^\tau$, together with a constant, whereas in \cite{Seip} the dilations of a single function, extended as an odd $2$-periodic function, are considered. The two settings are related by a change of variables, see Remark~\ref{CompHLS}.
Second, the multivariate systems are indexed by $\Zdpos$ and dilate along integer vectors $J$, that is $\varphi(J^Tx)$, which is different from the tensor-type dilations $\varphi(mx,ny)$. Third, our approach works in $L_q([0,1]^d)$ for all $q\in(1,\infty)$, where Hilbert space methods are not available. Fourth, it applies to mixed systems in which the cosine and the sine part are generated by different sequences, which is necessary to cover $\mathcal R_d$. For $d=1$ and $q=2$ the sine part of our characterisation reduces to the result of \cite{Seip}, see Remark~\ref{CompHLS}. 



\textbf{Organisation of the paper.} In Section~\ref{sect-2} we develop the general theory for the operators $T_\tau$ and the function systems $\mathcal Z^\tau$, including necessary conditions, mixed systems and examples as well as an connection to holomorphic functions. In Section~\ref{sect-app} we apply this theory to the system $\mathcal R_d$ and prove Theorem~\ref{MainInformal}(iii).

\section{Riesz and Schauder bases in Lebesgue spaces}\label{sect-2}

The guiding principle of this section is that basis properties are preserved under bounded linear operators with bounded inverse. Since $\mathcal Z^\tau$ is the image of the trigonometric system $\mathcal Z^e$ under $T_\tau$, it is a Schauder basis of $L_q([0,1]^d)$ whenever $T_\tau$ is bijective, and for $q=2$ a Riesz basis with Riesz constants $\|T_\tau^{-1}\|^{-2}$ and $\|T_\tau\|^2$. The main task is therefore to decide when $T_\tau$ is invertible and to estimate the norms of $T_\tau$ and $T_\tau^{-1}$. At multiple points of our research the Dirichlet convolution has made an appearance, for example, composing two operators of the type as in \eqref{DefTtau} corresponds to the Dirichlet convolution of the defining sequences, that is $T_\tau T_\lambda=T_{\tau*\lambda}$. For this reason we first recall the definition and most important properties of the Dirichlet convolution and introduce a $d$-dimensional version of it in Subsection~\ref{subsect-dirichlet}; for more details and proofs we refer to \cite[Ch.~2.6 and 2.7]{Numertheory} and \cite[Ch.~2.4]{Dirichlet}. In Subsection~\ref{subsect-T} we show that $\tau\mapsto T_\tau$ is an isomorphism of algebras and derive criteria for the invertibility of $T_\tau$. Subsection~\ref{subsect-Z} translates these into basis criteria for $\mathcal Z^\tau$. We then ask to what extent these criteria are also necessary and study the Hilbert case $q=2$ in more detail (Subsection~\ref{subsect-nec}). After that we extend our theory to mixed systems, which is needed for the investigation of $\mathcal R_d$ (Subsection~\ref{subsect-mixed}). Subsection~\ref{subsect-ex} concludes with examples and a holomorphic interpretation of the framework.


\subsection{The Dirichlet convolution}\rev{\label{subsect-dirichlet}} 

For any two real-valued sequences $a=(a_n)_{n \in \N}, b=(b_n)_{n \in \N}$ the Dirichlet convolution is usually defined as $$(a*b)_n:=\sum \limits_{\substack{c,d \in \N: \\ cd=n}} a_c b_d= \sum \limits_{d|n} a_\frac n d b_d.$$

If equipped with pointwise addition and Dirichlet multiplication, the set of complex-valued sequences forms a factorial ring, where $e$ defined by $e_n:=\delta_{1,n}$ is the neutral element and $a$ is a unit if, and only if, $a_1 \neq 0$. For our purposes it is enough to stick to real-valued sequences and to know that $\tau*a=0$ implies $a=0$ or $\tau=0$.

Let $\mu$ be the Möbius function $\mu: \N \to \R$ that is defined as follows: One sets $\mu_1:=1.$ For any $n \in \N_{\geq 2}$ determine the prime factorisation $n=\prod_{j=1}^k p_j^{a_j},$ where $p_j$ are prime numbers and $a_j$ are natural numbers for all $j=1, ..., k.$ If $a_j>1$ holds for some $j=1,...,k,$ then $\mu(n)=0.$ Else $\mu(n):=(-1)^k.$ Let $a: \N \to \R$ be fully multiplicative, that is let $a_{mn}=a_m a_n$ for all $m,n \in \N$ and $a \not\equiv 0,$ then $\mu \cdot a$ is the Dirichlet inverse of $a.$ 

When we investigate higher dimensions, the ordinary Dirichlet convolution will not suffice. Therefore we need to introduce a similar concept for sequences $(a_N)_{N \in \Zdpos}$ which will be prepared in the following lemma. We fix some more notation: For all $J \in \Zdpos$ we set $J_0:=\gcd(J_1, J_2, ..., J_d)$ and for any $k \in \N$ we put $\textbf{k}:=(k,k,...,k) \in \Zdpos.$

\begin{lemma}[\textbf{Solutions of} $kJ=N$] \label{kJ=N}
    Let $N \in \Zdpos,$ then $$\{(k,J) \in \N \times \Zdpos: kJ=N\} = \{(k,J) \in \N \times \Zdpos: k|N_0, J=k^{-1} N\}.$$
\end{lemma}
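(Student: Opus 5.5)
We prove the set equality by showing both inclusions, using only the definition of $\Zdpos$ and the elementary fact that an integer $k$ divides every entry of $N$ if and only if $k \mid N_0=\gcd(N_1,\dots,N_d)$.

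\emph{Inclusion ``$\subseteq$''.} Let $(k,J)\in\N\times\Zdpos$ with $kJ=N$. Then $N_i=kJ_i$ for every $i$. Since $J_i\in\Z$, $k$ divides each $N_i$, and hence $k\mid N_0$. Moreover $J=k^{-1}N$, because $k\neq 0$.

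\emph{Inclusion ``$\supseteq$''.} Let $k\in\N$ with $k\mid N_0$ and put $J:=k^{-1}N$. Because $k$ divides every $N_i$, $J$ is an integer vector. Since $N\neq 0$, also $J\neq 0$, and $J$ has the same zero pattern as $N$. Hence the index $m=\min\{i:J_i\neq0\}$ coincides with $\min\{i:N_i\neq 0\}$. Then $J_m=N_m/k>0$, because $N_m>0$ and $k>0$. Thus $J\in\Zdpos$, and clearly $kJ=N$.

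No step is a genuine obstacle. The only point requiring care is checking that the defining condition of $\Zdpos$ (first nonzero entry positive) is preserved under division by a positive integer. This holds because dividing by a positive integer changes neither the zero pattern nor the signs of the entries.
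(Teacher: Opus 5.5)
Your proof is correct and follows essentially the same route as the paper. The paper gets $k\mid N_0$ from the identity $N_0=\gcd(kJ_1,\dots,kJ_d)=kJ_0$, whereas you use that a common divisor of the entries divides their gcd; this is the same elementary fact. Your check that $k^{-1}N\in\Zdpos$ is not needed for the set equality as literally written, since the right-hand set already requires $J\in\Zdpos$. It is, however, exactly what makes the sum $\sum_{k\mid N_0}\tau_k a_{k^{-1}N}$ in the definition of $*_d$ well-defined, so it is worth keeping.
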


\begin{proof}
    First let $kJ=N,$ then $N_0=\gcd(kJ_1,...,kJ_d)=k \gcd(J_1,...,J_d)=kJ_0$ shows $k|N_0.$ Now $k$ divides every entry of $N,$ whence $J=k^{-1}N \in \Zdpos.$ Further, $k k^{-1}N=N$ clearly holds for all divisors $k$ of $N_0$. 
\end{proof}

Because of this lemma, the following notion of a $d$-dimensional Dirichlet convolution is well-defined.

\begin{definition}[$d$\textbf{-Dirichlet convolution}]
    Let $\tau: \N \to \R$ and $a:\Zdpos \to \R$ be sequences, then $\tau*_da: \Zdpos \to \R$ is called the $d$-Dirichlet convolution and is defined for all $N \in \Zdpos$ as $$(\tau*_da)_N:=\sum \limits_{\substack{(k,J) \in \N \times \Zdpos \\ kJ=N}} \tau_k a_J= \sum \limits_{k| N_0} \tau_k a_{k^{-1}N}.$$
\end{definition}

This definition extends the usual notion of the Dirichlet convolution to sequences on $\Zdpos$ because it holds $\Z^1_+=\N$ and $*_1=*.$ Our next goal is to investigate the relation between $*$ and $*_d, d>1$ in order to make the theory regarding $*$ available to arbitrary dimensions. Whenever we do not mention the dimension $d$ explicitly, we assume $d=1,$ denote $*_1$ as $*$ and refer to it as the (ordinary) Dirichlet convolution. Also, whenever we speak of the Dirichlet inverse of a sequence, we implicitly assume $d=1.$

\begin{lemma}[\textbf{Properties of the $d$-Dirichlet convolution}]\label{dDirichlet}
Let $\tau, \lambda: \N \to \R$ and $a: \Zdpos \to \R.$ Fix $N \in \Zdpos$ and put $\frac N{N_0}:=N_0^{-1}N \in \Zdpos$ as well as $a':\N \to \R, \ a'_k:=a_{k \frac N {N_0}}.$
\begin{enumerate}[label=(\roman*)]
    \item It holds $(\tau *_d a)'=(\tau*a')$ and $(\tau*_da)_N=(\tau*a')_{N_0}.$
    \item For all $d \in \N$ the $d$-Dirichlet convolution is pseudo-associative, that is it holds $(\tau * \lambda) *_da=\tau*_d(\lambda*_da).$
    \item It holds $e*_da=a.$
    \item It holds $\tau*_da=0,$ if and only if, it holds $a=0$ or $\tau=0.$
\end{enumerate}
\end{lemma}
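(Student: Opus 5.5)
The approach is to reduce everything to the ordinary Dirichlet convolution along the ray through $N$, using Lemma~\ref{kJ=N}. The key observation is that every $M\in\Zdpos$ can be written uniquely as $M=M_0\frac{M}{M_0}$, where $\frac M{M_0}$ is primitive, i.e.\ has gcd $1$. Consequently, $\Zdpos$ is the disjoint union of the rays $\{k P: k\in\N\}$, with $P$ running over the primitive vectors of $\Zdpos$. By Lemma~\ref{kJ=N}, every solution $(k,J)$ of $kJ=N$ has $J$ on the same ray as $N$.

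\textbf{(i)} Fix $N$ and write $P:=\frac N{N_0}$, so $a'_m=a_{mP}$. For $n\in\N$ the vector $nP$ has gcd $n$, since $\gcd(P)=1$. Lemma~\ref{kJ=N} then gives
$$(\tau*_da)_{nP}=\sum_{k|n}\tau_k a_{k^{-1}nP}=\sum_{k|n}\tau_k a'_{n/k}=(\tau*a')_n .$$
This is exactly $(\tau*_da)'=\tau*a'$, where the prime is taken with respect to the same $P$; note that $\frac{nP}{n}=P$, so $P$ is unchanged along the ray. Taking $n=N_0$ yields the second identity in (i).

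\textbf{(ii)} Fix $N$ and set $b:=\lambda*_da$. By (i), $b'=\lambda*a'$. Applying (i) twice more and using the associativity of the ordinary Dirichlet convolution, we get
$$(\tau*_d(\lambda*_da))_N=(\tau*b')_{N_0}=(\tau*(\lambda*a'))_{N_0}=((\tau*\lambda)*a')_{N_0}=((\tau*\lambda)*_da)_N .$$

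\textbf{(iii)} By (i), $(e*_da)_N=(e*a')_{N_0}=a'_{N_0}=a_N$, because $e$ is the neutral element of $*$.

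\textbf{(iv)} The direction "$\Leftarrow$" is immediate from the definition. For "$\Rightarrow$", suppose $\tau*_da=0$ and $\tau\neq0$. For each primitive $P$, part (i) gives $\tau*a'=(\tau*_da)'=0$ on $\N$. The stated property of the ordinary Dirichlet convolution, namely that $\tau*a=0$ implies $a=0$ or $\tau=0$, then forces $a'=0$, i.e.\ $a$ vanishes on the whole ray through $P$. Since the rays cover $\Zdpos$, we conclude $a=0$.

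The only delicate point is the bookkeeping in (i): one must check that the primitive direction $P$ is the same for every point $nP$ of the ray, so that the primed sequence of $\tau*_da$ is taken along the same ray as $a'$. Once this is settled, (ii)–(iv) follow formally from the one-dimensional theory.
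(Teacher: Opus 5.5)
Your proof is correct and follows essentially the same route as the paper: (i) reduces $*_d$ to the ordinary Dirichlet convolution along the ray through $\frac{N}{N_0}$, and (ii)--(iv) then follow from the associativity of $*$, the neutral element $e$, and the fact that $*$ has no zero divisors, exactly as in the paper's proof. You also make explicit that $(nP)_0=n$ for primitive $P$, which the paper uses tacitly in (i), and your order of the factors in (ii) avoids the paper's harmless appeal to the commutativity of $*$.
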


\begin{proof}
\begin{enumerate}[label=(\roman*)]
    \item We have $(\tau *_d a)_k'=(\tau*_da)_{k \frac N {N_0}} = \sum \limits_{j|k} \tau_j a_{\frac k j \frac N {N_0}} = \sum \limits_{j|k}\tau_j a_{\frac k j}'=(\tau*a')_k$ for all $k \in \N.$ Further, it holds $(\tau*_da)_N = \sum \limits_{k|N_0} \tau_k a_{\frac {N_0} k \frac N {N_0}}=\sum \limits_{k|N_0} \tau_k a'_{\frac{N_0} k}=(\tau*a')_{N_0}.$ 
    \item We use the associativity of $*$. Let $N \in \Zdpos$, then it holds $$((\tau*\lambda)*_da)_N= ((\lambda*\tau)*a')_{N_0}=(\lambda*(\tau*a'))_{N_0}=(\lambda*(\tau*_da)')_{N_0}=(\lambda*_d(\tau*_da))_N.$$
    \item For any $N \in \Zdpos$ we calculate $(e*_da)_N=(e*a')_{N_0}=(a')_{N_0}=a_N$.
    \item For the if part we calculate $(0*_da)_N=(0*a')_{N_0}=0$ as well as $(\tau*_d0)_N=(\tau*0')_{N_0}=(\tau*0)_{N_0}=0$. Regarding the only if part let $\tau \neq 0.$ For any $N \in \Zdpos$ we have $0=0'=(\tau*_da)'=(\tau*a'),$ showing $a'=0$ and in particular $0=a_{N_0}'=a_N.$
\end{enumerate}
\end{proof}

\subsection{The map $T_{\tau}$ and its properties}\rev{\label{subsect-T}}
We now turn to the  class of linear operators $T_{\tau}$ from \eqref{DefTtau} and show that they have a very nice structure. 

\begin{theorem}[\textbf{Sum and scale operators $T_\tau$}] \label{Ttau}
    Let $q \in (1,\infty), \ d \in \N$ and $\tau \in \ell_1.$ Consider the operator $T_\tau$ from \eqref{DefTtau}.  Then it holds:
    \begin{enumerate}[label=(\roman*)]
        \item $T_\tau$ is a well-defined linear operator and bounded, where $\|T_\tau\|\leq\|\tau\|_{\ell_1}.$
        \item Assume additionally $\|\tau\|_{\ell_1} < 2|\tau_1|,$ then $T_\tau$ is bijective and it holds $\|T_\tau^{-1}\| \leq \frac{1}{2|\tau_1|-\|\tau\|_{\ell_1}}.$
        \item Assume $\|\tau\|_{\ell_1}<2|\tau_j|$ for some $j \in \N,$ then $T_\tau$ is bijective onto a closed subspace and $\|T_\tau^{-1}\| \leq \frac 1 {2|\tau_j|-\|\tau\|_{\ell_1}}$.
        \item Let $h,f \in L_q([0,1])$ such that $T_\tau h = f,$ then $T_\tau (h_\alpha) = f_\alpha$ holds for all $\alpha \in \Zdpos.$ 
    \end{enumerate} 
\end{theorem}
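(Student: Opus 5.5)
The proof will rest on one fact: every dilation operator $G_n$ from \eqref{Gk} is an isometry of $L_q([0,1]^d)$. For a $1$-periodic $g$ and $n\in\N$, the substitution $y=nx$ maps $[0,1]^d$ onto $[0,n]^d$. This cube consists of $n^d$ translates of $[0,1]^d$, and the Jacobian is $n^{-d}$. Hence
\[\int_{[0,1]^d}|g(nx)|^q\,dx=n^{-d}\int_{[0,n]^d}|g(y)|^q\,dy=\int_{[0,1]^d}|g(y)|^q\,dy,\]
so $\|G_n g\|_q=\|g\|_q$. Measurability of $g(n\cdot)$ and independence of the representative are clear, since dilation maps null sets to null sets. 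Linearity of $G_n$ is obvious.

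For (i), I write $T_\tau=\sum_k\tau_kG_k$. The series converges absolutely in operator norm, because $\sum_k|\tau_k|\,\|G_k\|=\|\tau\|_{\ell_1}<\infty$. So $T_\tau$ is a well-defined bounded linear operator with $\|T_\tau\|\le\|\tau\|_{\ell_1}$. Convergence in $L_q$ also gives a subsequence converging pointwise a.e., so this agrees with the pointwise definition \eqref{DefTtau}.

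For (ii) and (iii), I fix $j$ with $\|\tau\|_{\ell_1}<2|\tau_j|$. The plan is to bound $T_\tau$ from below using the reverse triangle inequality and the isometry property:
\[\|T_\tau g\|_q\ge|\tau_j|\,\|G_jg\|_q-\sum_{k\neq j}|\tau_k|\,\|G_kg\|_q=(2|\tau_j|-\|\tau\|_{\ell_1})\|g\|_q.\]
This gives injectivity, closed range and the stated bound on the inverse on the range, which proves (iii). For (ii), with $j=1$, I still need surjectivity. Here $G_1=I$, so I write $T_\tau=\tau_1(I-R)$ with $R=-\tau_1^{-1}\sum_{k\ge2}\tau_kG_k$. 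Then
\[\|R\|\le\frac{\|\tau\|_{\ell_1}-|\tau_1|}{|\tau_1|}<1,\]
so the Neumann series shows that $T_\tau$ is invertible. Its inverse has norm at most
\[\frac{1}{|\tau_1|(1-\|R\|)}=\frac{1}{2|\tau_1|-\|\tau\|_{\ell_1}}.\]
This matches the lower bound above.

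For (iv), I take $h,f\in L_q([0,1])$ and $\alpha\in\Zdpos$. First, the map $h\mapsto h_\alpha$ is bounded from $L_q([0,1])$ to $L_q([0,1]^d)$, in fact isometric. To see this, choose an index $i$ with $\alpha_i\neq0$ and integrate first over $x_i$. For fixed other coordinates, $x_i\mapsto\alpha^Tx$ runs over $|\alpha_i|$ periods, so the same periodicity argument as above applies. Second, I check the identity pointwise: $(G_kh)_\alpha(x)=h(k\alpha^Tx)=h(\alpha^T(kx))=G_k(h_\alpha)(x)$. Summing over $k$ and using the boundedness of $h\mapsto h_\alpha$ to interchange the map with the $L_q$-convergent series gives $T_\tau(h_\alpha)=(T_\tau h)_\alpha=f_\alpha$.

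The main obstacle is the isometry of $G_n$ and of $h\mapsto h_\alpha$, which rests on periodicity and change of variables; everything else is a standard perturbation argument. One further point needs care. In (iii) the bound $\|T_\tau^{-1}\|$ should be read as the norm of the inverse defined on the closed range of $T_\tau$.
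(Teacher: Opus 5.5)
Your proposal is correct and follows essentially the same route as the paper: the isometry of $G_k$ via change of variables and periodicity, a Neumann series for $T_\tau=\tau_1(\mathrm{Id}-R)$ in (ii), the reverse triangle inequality for the lower bound in (iii), and the identity $G_k(h_\alpha)=(G_kh)_\alpha$ in (iv), where your isometry of $h\mapsto h_\alpha$ makes rigorous the interchange that the paper carries out pointwise. The only slip is in (ii): since you only have an upper bound on $\|R\|$, the step $\frac{1}{|\tau_1|(1-\|R\|)}=\frac{1}{2|\tau_1|-\|\tau\|_{\ell_1}}$ should read $\leq$.
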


\begin{proof}
\begin{enumerate}[label=(\roman*)]
    \item Put $I:=[0,1]^d.$ For each $g \in L_q(I)$ it holds \begin{equation}\label{1} \|G_k g\|_q^q =\int_I |g(kx)|^q \ \mathrm d x = \frac 1 {k^d} \int_{[0,k]^d} |g(x)|^q \ \mathrm d x = \frac 1 {k^d} \sum \limits_{j \in \{0, ..., k-1\}^d} \int_{j+I} |g(x)|^q \ \mathrm d x =\int_I |g(x)|^q \ \mathrm d x =\|g\|_q^q \end{equation} and consequently $$\|T_\tau g\|_q \leq \sum \limits_{k=1}^\infty |\tau_k| \|g\|_q = \|\tau\|_{\ell_1} \|g\|_q.$$ In particular, $T_\tau$ is a well-defined linear operator bounded by $\|\tau\|_{\ell_1}.$
    \item Using (i), we set $\kappa=\tau - (\tau_1, 0,0, ...) \in \ell_1$ and derive $$\|T_\tau-\tau_1 \text{Id}\| = \|T_\kappa\| \leq \|\kappa\|_{\ell_1}=\|\tau\|_{\ell_1}-|\tau_1|<|\tau_1|.$$ Now $T_\tau= \tau_1 \text{Id} - (\tau_1 \text{Id}-T_\tau)$ is boundedly invertible by \cite[Ex. 5.7.1]{AppliedFuncAna} and from the proof in combination with \cite[Prop 3.4]{FuncAnaAppl} we arrive at the estimate $$ \|T_\tau^{-1}\| \leq \frac 1{|\tau_1|- \|T_\tau-\tau_1 \text{Id}\|} \leq \frac{1}{2|\tau_1|-\|\tau\|_{\ell_1}}.$$
    \item Due to \eqref{1} we calculate that $$\|T_\tau g\|_{L_q([0,1]^d)} \geq |\tau_j| \|g\|_{L_q([0,1]^d)} - \sum \limits_{k=1, k \neq j}^\infty |\tau_k| \|g\|_{L_q([0,1]^d)} = (2|\tau_j|-\|\tau\|_{\ell_1})\|g\|_{L_q([0,1]^d)}$$ holds for all $g \in L_q([0,1]^d).$ This shows injectivity and $\|T_\tau^{-1}\| \leq (2|\tau_j|-\|\tau\|_{\ell_1})^{-1},$ where $T_\tau^{-1}$ is to be understood as the inverse of $T_\tau: L_q([0,1]^d) \to \operatorname{ran}(T_\tau).$ 

    Regarding closedness of the image, let $y_n=T_\tau x_n$ be a sequence converging to $y \in L_q([0,1]^d),$ then $(y_n)_{n \in \N}$ is a Cauchy sequence. It follows $\|x_n-x_m\|=\|T_\tau^{-1}(y_n-y_m)\| \leq \|T_\tau^{-1}\|\|y_n-y_m\| \to 0$ if $n,m \to \infty.$ Whence $(x_n)_{n \in \N}$ is itself a Cauchy sequence and converges to some $x \in L_q([0,1]^d).$ Continuity of $T_\tau$ shows $y=T_\tau x \in \text{ran}(T_\tau).$ 
    \item For all $x \in I$ it holds $$T_\tau (h_\alpha)(x)=\sum \limits_{k=1}^\infty \tau_k h_\alpha(kx)=\sum \limits_{k=1}^\infty \tau_kh(\alpha^T(kx)) = \sum \limits_{k=1}^\infty \tau_k h(k(\alpha^Tx)) = T_\tau h (\alpha^Tx) = f(\alpha^Tx) = f_\alpha(x).$$
\end{enumerate}
\end{proof}

Note that in the inequality $\|\tau\|_{\ell_1} < 2|\tau_j|$ both sides depend on $|\tau_j|.$ In order to avoid this, one can use the equivalent inequality $\sum \limits_{k=1, k \neq j}^\infty |\tau_k| < |\tau_j|.$

The operators $T_\tau$ interact nicely with each other.

\begin{theorem}[\textbf{Algebraic structure of $\{T_\tau:\tau \in \ell_1\}$}] \label{AlgTtau}

Let $\mathcal A:=\{T_\tau: \tau \in \ell_1\} \subset \mathcal L(L_q([0,1]^d),$ then $\mathcal A$ is an abelian sub-algebra of $\mathcal L(L_q([0,1]^d).$ Further, the map $A: \ell_1 \to \mathcal A$, $\tau \mapsto T_{\tau}$, is an isomorphism of algebras with $\|A\|=1,$ if $\ell_1$ is equipped with the Dirichlet convolution as multiplication. 
\end{theorem}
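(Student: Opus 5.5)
The plan is to verify three things for the map $A:\tau\mapsto T_\tau$: linearity, multiplicativity with respect to $*$, and injectivity. Then $\mathcal A=A(\ell_1)$ is automatically a subalgebra, it is abelian because $*$ is commutative, and $A$ is an algebra isomorphism onto $\mathcal A$. The norm statement follows from Theorem~\ref{Ttau}(i) together with one test vector.

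\textbf{Linearity and norm.} Linearity $T_{\alpha\tau+\beta\lambda}=\alpha T_\tau+\beta T_\lambda$ is immediate from $T_\tau=\sum_k\tau_kG_k$. The series converges absolutely in operator norm, since $\|G_k\|=1$ by \eqref{1}. Theorem~\ref{Ttau}(i) gives $\|T_\tau\|\le\|\tau\|_{\ell_1}$, so $\|A\|\le1$. For equality I take $\tau=e$: then $T_e=\mathrm{Id}$, so $\|T_e\|=1=\|e\|_{\ell_1}$.

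\textbf{Multiplicativity.} First I check $G_nG_m=G_{nm}$ directly, since $G_n(G_mg)(x)=(G_mg)(nx)=g(mnx)$. Next I expand the composition. Both series converge absolutely in $\mathcal L(L_q([0,1]^d))$, because $\sum_{n,m}|\tau_n||\lambda_m|=\|\tau\|_{\ell_1}\|\lambda\|_{\ell_1}<\infty$. Hence the double series can be multiplied out and regrouped along $nm=k$:
\[
T_\tau T_\lambda=\sum_{n,m}\tau_n\lambda_mG_{nm}=\sum_{k=1}^\infty\Big(\sum_{nm=k}\tau_n\lambda_m\Big)G_k=T_{\tau*\lambda}.
\]
This computation also shows that $\tau*\lambda\in\ell_1$, with $\|\tau*\lambda\|_{\ell_1}\le\|\tau\|_{\ell_1}\|\lambda\|_{\ell_1}$. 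So $\ell_1$ is closed under $*$ and $(\ell_1,+,*)$ is an algebra. Commutativity of $*$ then gives $T_\tau T_\lambda=T_\lambda T_\tau$.

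\textbf{Injectivity.} This is the main step. Suppose $T_\tau=0$. I apply it to the trigonometric function $g=c_{e_1}$, i.e.\ $g(x)=\cos(2\pi x_1)$. Then
\[
T_\tau g(x)=\sum_k\tau_k\cos(2\pi kx_1)
\]
is an absolutely and uniformly convergent cosine series. Its $k$-th Fourier coefficient, obtained by integrating against $\cos(2\pi kx_1)$ over $[0,1]^d$ and using uniform convergence to swap sum and integral, equals $\tau_k/2$. Hence $\tau=0$, and $A$ is injective. Surjectivity onto $\mathcal A$ holds by definition, so $A$ is bijective.
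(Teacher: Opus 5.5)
Your proof is correct and matches the paper's argument essentially step for step. You get $\|A\|=1$ from Theorem~\ref{Ttau}(i) together with $T_e=\mathrm{Id}$, multiplicativity from $G_nG_m=G_{nm}$ and regrouping the absolutely convergent double series along $nm=k$, and injectivity by reading off $\tau_k/2$ as a Fourier coefficient of $T_\tau$ applied to a single cosine; the only cosmetic difference is that you test with $\cos(2\pi x_1)$, i.e.\ $J=(1,0,\dots,0)$, where the paper uses $\cos(2\pi v^Tx)$ with $v=(1,\dots,1)$, and both choices work equally well.
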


\begin{proof}
Let $\tau, \lambda \in \ell_1.$ Clearly, for any $k \in \R$ we have $T_\tau + k T_\lambda = T_{\tau + k \lambda} \in \mathcal A,$ showing that $\mathcal A$ is a subspace and $A$ is linear. We will now prove that $\tau * \lambda \in \ell_1$ and $T_\tau \circ T_\lambda=T_{\tau*\lambda}$. It holds \begin{equation} \label{Submult}
    \|a*b\|_{\ell_1}=\sum \limits_{k=1}^\infty |(a*b)_k| \leq \sum \limits_{k=1}^\infty \sum \limits_{\substack{c,d \in \N \\ cd=k}}|a_c| |b_d| = \left(\sum \limits_{c=1}^\infty |a_c|\right) \left(\sum \limits_{d=1}^\infty |b_d|\right) = \|a\|_{\ell_1}\|b\|_{\ell_1},
\end{equation} whence $\ell_1$ is closed under multiplication. 

From \eqref{1} it follows that $G_k$ is an isometry for all $k \in \N$, whence $\|G_k\|=1.$ Obviously $G_k \circ G_j=G_{kj}$ holds and we obtain $$ T_\tau \circ T_\lambda = \left(\sum \limits_{k=1}^\infty \tau_k G_k\right) \circ \left(\sum \limits_{j=1}^\infty \lambda_j G_j\right) = \sum \limits_{k=1}^\infty \sum \limits_{j=1}^\infty \tau_k \lambda_j G_{kj}.$$ The  series on the right hand side converges absolutely because of $\sum \limits_{k=1}^\infty \sum \limits_{j=1}^\infty |\tau_k| |\lambda_j|= \sum \limits_{k=1}^\infty |\tau_k| \|\lambda\|_{\ell_1} = \|\tau\|_{\ell_1} \|\lambda\|_{\ell_1}.$ Thus, we may rearrange this expression to $\sum \limits_{n=1}^\infty \left(\sum \limits_{\substack{j,k \in \N: \\ jk=n}} \tau_k \lambda_j\right)G_n = \sum \limits_{n=1}^\infty (\tau*\lambda)_n G_n= T_{\tau * \lambda}.$ 
In particular, because $*$ is commutative it follows $T_\tau \circ T_\lambda=T_{\tau*\lambda}=T_{\lambda*\tau}=T_\lambda \circ T_\tau$ and $\mathcal A$ is an abelian sub-algebra.

We have already seen that $\|T_\tau\| \leq \|\tau_1\|_{\ell_1}$ holds for all $\tau \in \ell_1,$ showing that $\|A\| \leq 1.$ Because $\|T_e\|=\|\text{Id}\|=1$ and $\|e\|_{\ell_1}=1,$ we get $\|A\| \geq 1$ and whence $\|A\|=1.$ 

It is clear that $A$ is a surjective homomorphism of algebras. In order to prove injectivity let $v:=(1,...,1)$ be the vector of ones, then $kv=(k,...,k)$ for all $k \in \N.$ We notice that $T_\tau(\cos(2 \pi v^T\cdot))=\sum \limits_{k=1}^\infty \tau_k \cos( 2 \pi (kv)^T\cdot),$ hence $\int_{[0,1]^d} \cos(2 \pi (kv)^Tx) T_\tau(\cos(2 \pi v^T\cdot)) \mathrm d x=\frac{\tau_k} 2.$ Now $T_\tau=T_\lambda$ implies $$\tau_k=2\int_{[0,1]^d} \cos(2 \pi (kv)^Tx) T_\tau(\cos(2 \pi v^T\cdot)) \mathrm d x=2\int_{[0,1]^d} \cos(2 \pi (kv)^Tx) T_\lambda(\cos(2 \pi v^T\cdot)) \mathrm d x=\lambda_k$$ 
for all $k \in \N,$ which in turn implies $\tau=\lambda$ and proves that $A$ is injective and consequently bijective.
\end{proof}

Theorem \ref{AlgTtau} shows that the algebraic interplay of the operators $T_\tau$ is governed by that of the sequences $\tau$. For our purposes the crucial question is when $T_\tau$ is invertible, because then it maps the trigonometric system onto a Schauder basis (see Lemma~\ref{Mainbases} below). The following theorem characterises injectivity of $T_\tau$ and shows that a summable Dirichlet inverse $\tau^{-1}$ yields a bounded inverse $T_\tau^{-1}=T_{\tau^{-1}}$.

\begin{theorem}[\textbf{Invertibility of }$T_\tau$] \label{InvTtau}
    Let $q \in (1,\infty), d \in \N$ and $\tau \in \ell_1.$
    \begin{enumerate}[label=(\roman*)]
        \item The operator $T_\tau: L_q([0,1]^d) \to L_q([0,1]^d)$ is injective if, and only if, $\tau_0 \neq 0$.
        \item Let $\tau_1 \neq 0$ and $\tau^{-1}$ be its Dirichlet inverse. Assume $\tau^{-1} \in \ell_1,$ then $T_\tau$ is bijective and $T_\tau^{-1}=T_{\tau^{-1}}.$
    \end{enumerate}
\end{theorem}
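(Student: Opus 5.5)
For part (i), the plan is to compute the Fourier coefficients of $T_\tau g$ in terms of those of $g$ and then use the cancellation law from Lemma~\ref{dDirichlet}(iv). We write $\hat g(N)=\int_{[0,1]^d} g(x)e^{-2\pi i N^Tx}\,\mathrm dx$ for $N\in\Z^d$, which is well defined because $L_q([0,1]^d)\subset L_1([0,1]^d)$.

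The necessity direction is immediate. The constant function $1$ satisfies $T_\tau 1=\sum_k\tau_k\cdot 1=\tau_0$. Hence if $\tau_0=0$, the nonzero function $1$ lies in the kernel and $T_\tau$ is not injective.

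For sufficiency, assume $\tau_0\neq 0$; in particular $\tau\neq 0$. The series $T_\tau g=\sum_k\tau_kG_kg$ converges absolutely in $L_q$, hence in $L_1$, so we may integrate termwise against $e^{-2\pi i N^Tx}$. For a single dilation, the change of variables used in \eqref{1} gives
\[
\widehat{G_kg}(N)=\hat g(N/k)\ \text{ if } k \text{ divides every entry of } N, \qquad \widehat{G_kg}(N)=0 \text{ otherwise}.
\]
For $N\neq 0$, the condition "$k$ divides every entry of $N$" is exactly $k\mid N_0$. Summing over $k$ therefore yields
\[
\widehat{T_\tau g}(0)=\tau_0\hat g(0),\qquad \widehat{T_\tau g}(N)=\sum_{k\mid N_0}\tau_k\hat g(k^{-1}N)=(\tau*_d a)_N \quad (N\in\Zdpos),
\]
where $a_J:=\hat g(J)$ for $J\in\Zdpos$. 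The same formula holds on $-\Zdpos$ with $b_J:=\hat g(-J)$. Since $\Z^d\setminus\{0\}=\Zdpos\cup(-\Zdpos)$, this accounts for all frequencies; to stay within real sequences, one can apply the argument to the real and imaginary parts of $a$ and $b$ separately.

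Now suppose $T_\tau g=0$. Then $\tau_0\hat g(0)=0$, and $\tau*_d a=0$, $\tau*_d b=0$. Because $\tau\neq0$, Lemma~\ref{dDirichlet}(iv) gives $a=b=0$, and $\tau_0\neq0$ gives $\hat g(0)=0$. So every Fourier coefficient of $g$ vanishes, and uniqueness of Fourier coefficients for $L_1$ functions yields $g=0$. The only step requiring care is the termwise computation of Fourier coefficients and the identification of the index set $\{k: k\mid N_j \ \forall j\}$ with the divisors of $N_0$, which is the content of Lemma~\ref{kJ=N}.

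Part (ii) is purely algebraic and follows from Theorem~\ref{AlgTtau}. Since $\tau^{-1}\in\ell_1$ by assumption, $T_{\tau^{-1}}$ is a bounded operator. By commutativity of the algebra $\mathcal A$,
\[
T_\tau T_{\tau^{-1}}=T_{\tau*\tau^{-1}}=T_e=\mathrm{Id}=T_{\tau^{-1}}T_\tau .
\]
Hence $T_\tau$ is bijective with bounded inverse $T_\tau^{-1}=T_{\tau^{-1}}$. I expect no real obstacle here.
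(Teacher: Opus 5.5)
Your proof is correct and follows the paper's argument. The necessity direction and part (ii) are identical to the paper's, and for sufficiency both show that the nonconstant coefficients of $T_\tau g$ are $\tau*_d a$ and $\tau*_d b$, then conclude with Lemma~\ref{dDirichlet}(iv) and $\tau_0\neq0$. The paper expands $h$ in the real trigonometric Schauder basis and invokes Proposition~\ref{Coeff}(i); you instead compute complex Fourier coefficients by termwise integration of the $L_1$-convergent series $\sum_k\tau_kG_kg$ and finish with $L_1$ uniqueness, which avoids the regrouping of the double series over $(k,J)$ that the paper leaves implicit.
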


\begin{proof}
\begin{enumerate}[label=(\roman*)]
    \item First let $\tau_0=0,$ then $T_\tau(1)=0=T_\tau(0)$ and $T_\tau$ is not injective. Thus, assume $\tau_0 \neq 0.$
    
    Let $h \in \ker(T_\tau),$ that is, $h \in L_q([0,1]^d)$ with $T_\tau(h)=0.$ We write $h=h_0 +\sum \limits_{J \in \Zdpos}a_J \sqrt 2 c_J + b_J \sqrt 2 s_J$ from which $$0=T_\tau h = \tau_0 h_0 + \sum \limits_{J \in \Zdpos} (\tau *_d a)_J \sqrt 2 c_J + (\tau *_d b)_J \sqrt 2 s_J$$ and $\tau *_d a=0_{\Zdpos}=\tau *_d b$ as well as $\tau_0 h_0=0$ follow. See Proposition \ref{Coeff}(i) below for a detailed proof of these formulae. 
    This implies $h_0=\frac {\tau_0 h_0}{\tau_0}=0$ as well as $a=0=b$ because $\tau \neq 0.$ We conclude $h=0,$ whence $\ker(T_\tau)=\{0\}$ and consequently shows that $T_\tau$ is injective.  
    \item It holds $T_{\tau^{-1}} \in \mathcal L(L_q([0,1]^d))$ and $T_\tau \circ T_{\tau^{-1}}=T_e=\text{Id}=T_{\tau^{-1}} \circ T_\tau.$
\end{enumerate}
\end{proof}

We will now discuss these results. A sequence $\tau$ admits a Dirichlet inverse if, and only if, $\tau_1 \neq 0$ holds. Because of Theorem \ref{AlgTtau} 
we might conjecture that for $\tau \in \ell_1$ the operator $T_\tau$ is bijective if, and only if, $\tau$ is invertible with respect to the Dirichlet convolution which is equivalent to $\tau_1 \neq 0.$ The first assertion of the foregoing theorem however states that $\tau_0 \neq 0$ is equivalent to injectivity of $T_\tau.$ Further, following its proof we see that the kernel of $T_\tau$ contains only constant functions. If we instead interpret $T_\tau$ as an operator on $L_q([0,1]^d)_0:=\{f \in L_q([0,1]^d):\int_{[0,1]^d} f(x) \mathrm d x=0\},$ we would see that $T_\tau$ is injective unless $\tau=0.$ In particular, surjectivity and bijectivity of $T_\tau$ would be equivalent. Later, we will see, that if $\tau_1 = 0,$ then $T_\tau$ is not surjective.

We investigate the second criterion a bit more. Let $\tau \in \ell_1$ be completely multiplicative, then the Dirichlet inverse of $\tau$ is given by $\mu \cdot \tau,$ which is summable, because $\mu$ is bounded by 1. If $\tau=c \cdot \lambda,$ where $\lambda$ is completely multiplicative and $c \neq 0,$ then $\tau^{-1}= \frac 1 c \cdot\lambda^{-1}= \frac 1 c \cdot \mu \cdot \tau,$ which is again summable.

If we combine the two statements, we see that for any $\tau \in \ell_1$ where $\tau_1 \neq 0=\tau_0$ the Dirichlet inverse $\tau^{-1}$ cannot be summable. If $\tau$ and $\tau^{-1}$ would both be summable, $T_\tau$ would have to be bijective and injective in particular, contradicting $\tau_0=0$. Thus, for the sequence $(1,-1,0,0,...) \in \ell_1$ we know that the Dirichlet inverse exists, but is not an element of $\ell_1.$ In particular, the assumption that both $\tau$ and $\tau^{-1}$ are summable is crucial!


We will now establish a connection between Theorem \ref{Ttau}(ii) and Theorem \ref{InvTtau}(ii).

\begin{remark}[\textbf{The Neumann series of a Dirichlet inverse}] \label{DirichletNeumann}
    Given $\tau \in \ell_1$ with $\|\tau\|_{\ell_1}<2|\tau_1|$ we used the Neumann series 
    in Theorem \ref{Ttau}(ii) to prove that $T_\tau$ is bijective. We will now show that under this condition the Dirichlet inverse of $\tau$ is summable and $\|\tau^{-1}\|_{\ell_1} \leq \frac 1 {2|\tau_1|-\|\tau\|_{\ell_1}}$. 
    
    Because $|\tau_1|>\frac{\|\tau\|_{\ell_1}}{2} \geq 0$ we know that $\tau_1 \neq 0$ and that the 
    Dirichlet inverse must exist. It holds $$\tau=\tau_1 \cdot e-(\tau_1 \cdot e-\tau) = \tau_1 \cdot (e-(e-\tau_1^{-1} \cdot \tau)).$$ Further, $\|e-\tau_1^{-1}\cdot\tau\|_{\ell_1}=|\tau_1^{-1}| \|\tau_1e-\tau\|_{\ell_1}=\frac{\|\tau\|_{\ell_1}-|\tau_1|}{|\tau_1|}<1$ holds true. For all $n \in \N$ and $\tau:\N \to \R$ we write $\tau^{*n}:=\tau * \tau * ... * \tau$ where the right hand side consists of $n$ factors and $\tau^{*0}=e.$ Because of \eqref{Submult} it holds $\|\kappa^{*n}\|_{\ell_1} \leq \|\kappa\|^n_{\ell_1}$ for all $n \in \N_0$ and $\kappa \in \ell_1.$ Since $\ell_1$ is complete, the series $\tau_1^{-1} \sum \limits_{n=0}^\infty (e-\tau_1^{-1} \cdot \tau)^{*n}$ converges in $\ell_1$ and it is bounded by $|\tau_1^{-1}| \sum \limits_{n=0}^\infty \|e-\tau_1^{-1} \cdot \tau\|^n_{\ell_1} \leq |\tau_1^{-1}|\sum \limits_{n=0}^\infty \left(\frac{\|\tau\|_{\ell_1}-|\tau_1|}{|\tau_1|}\right)^n=\frac 1 {|\tau_1|} \frac 1 {1-\frac{\|\tau\|_{\ell_1}-|\tau_1|}{|\tau_1|}}=\frac 1 {2|\tau_1|-\|\tau\|_{\ell_1}}.$ 
    One can now proceed as in \cite[Ex. 5.7.1]{AppliedFuncAna} 
    in order to see that the limit is in fact the Dirichlet inverse.


    More interestingly the equation 
    $\tau^{-1}=\tau_1^{-1} \sum \limits_{n=0}^\infty (e-\tau_1^{-1} \cdot \tau)^{*n}$ holds true in a more general sense for all $\tau:\N \to \R$ as long as $\tau_1 \neq 0.$ Letting $S_m := \tau_1^{-1} \sum \limits_{n=0}^m(e-\tau_1^{-1}\tau)^{*n}$ it holds $S_m \to \tau^{-1}$ pointwise. To see this we calculate $$\tau S_m=S_m \tau = S_m(\tau_1 \cdot (e-(e-\tau_1^{-1} \cdot \tau)))=e-(e-\tau_1^{-1} \tau)^{*(m+1)}.$$ Now we prove $(e-\tau_1^{-1}\tau)^{*m}_k=0$ for all $1 \leq k \leq m$ inductively. The case $m=1$ is clear. 
    Assuming the statement is true for $m \in \N$ we calculate $$(e-\tau_1^{-1}\tau)^{*(m+1)}_k=((e-\tau_1^{-1}\tau)^{*m}*(e-\tau_1^{-1}\tau))_k = \sum \limits_{j|k}(e-\tau_1^{-1}\tau)^{*m}_j (e-\tau_1^{-1}\tau)_{\frac k j}.$$
    First let $k \leq m,$ then because of $j \leq k \leq m$ for every divisor $j$ of $k,$ we get $(e-\tau_1^{-1}\tau)^{*m}_j=0$ and the right hand side sum vanishes. Similarly in case $k=m+1$ all summands vanish, except possibly 
    $j=k=m+1.$ But then $(e-\tau_1^{-1}\tau)_1=0.$ From this we see the pointwise convergence of $S_m$ as well as the limit being $\tau^{-1}.$
\end{remark}

\subsection{The function system $\mathcal Z^\tau$ and its basis properties}\label{subsect-Z}

We return to the original question of bases of $L_q([0,1]^d)$. Let $c, s: \R \to \R, \ c(x):= \cos (2 \pi x), \ s(x):=\sin(2 \pi x).$

The starting point is the trigonometric system $\mathcal Z^e$, which is an orthonormal basis of $L_2([0,1]^d)$ and a Schauder basis of $L_q([0,1]^d)$ for $q\in(1,\infty)$ as seen in \cite{4}. A bounded bijective operator maps a Schauder basis to a Schauder basis and an orthonormal basis to a Riesz basis, see Lemma \ref{Mainbases}. 
The system $\mathcal Z^\tau$ is by construction the image of $\mathcal Z^e$ under $T_\tau$, whence the properties of $T_\tau$ from the previous subsection can be used to decide when $\mathcal Z^\tau$ is a basis, and to estimate the Riesz constants by $\|T_\tau\|$ and $\|T_\tau^{-1}\|$. In this subsection we first make this precise (Lemma~\ref{ZystemSRBasis}) and derive criteria in terms of $\tau$ (Theorem~\ref{Basiscrit}). We then show how the coefficients with respect to these systems can be computed with the $d$-Dirichlet convolution (Proposition~\ref{Coeff}). Necessary conditions, the Hilbert space case and mixed systems are treated in the two following subsections.

\begin{lemma}[\textbf{The main bases}] \label{Mainbases}
    Let $q \in (1,\infty), d \in \N$ and $T \in \mathcal L(L_q([0,1]^d))$ be bijective, then $$\{T(1)\} \cup \{T(\sqrt 2 c_J), T(\sqrt 2 s_J): J \in \Zdpos\} \subset L_q([0,1]^d)$$ is a Schauder basis and 
    in case $q=2$ a Riesz basis with optimal Riesz constants $A=\|T^{-1}\|^{-2}$ and $B=\|T\|^2$. 
\end{lemma}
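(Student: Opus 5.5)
The plan is to transport the basis property of the trigonometric system $\mathcal Z^e$ through $T$. I will take as given that $\mathcal Z^e$ is a Schauder basis of $L_q([0,1]^d)$, $q\in(1,\infty)$, enumerated in the Pringsheim-compatible order fixed before, and an orthonormal basis of $L_2([0,1]^d)$, as recalled from \cite{4}. Since $T$ is bounded and bijective, the open mapping theorem gives $T^{-1}\in\mathcal L(L_q([0,1]^d))$; this is the only analytic input beyond the trigonometric system.

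For the Schauder part, write $(x_n)_n$ for the enumeration of $\mathcal Z^e$ and $y_n:=Tx_n$. Given $f\in L_q$, I expand $T^{-1}f=\sum_n\alpha_n x_n$ in $\mathcal Z^e$. Applying the continuous operator $T$ to the partial sums gives $f=\sum_n\alpha_n y_n$ with convergence in $L_q$. For uniqueness, suppose $\sum_n\beta_n y_n=0$. Applying the continuous $T^{-1}$ to the partial sums yields $\sum_n\beta_n x_n=0$, so uniqueness of the expansion in $\mathcal Z^e$ forces $\beta=0$, and hence uniqueness in general by linearity.

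For $q=2$, let $\alpha\in\ell_2$ and put $g=\sum_n\alpha_n x_n$. By Parseval, $\|g\|_2=\|\alpha\|_{\ell_2}$, and by continuity $Tg=\sum_n\alpha_n y_n$. Then $\|Tg\|_2\le\|T\|\,\|\alpha\|_{\ell_2}$, and $\|\alpha\|_{\ell_2}=\|T^{-1}Tg\|_2\le\|T^{-1}\|\,\|Tg\|_2$. Squaring both inequalities gives the Riesz bounds with $A=\|T^{-1}\|^{-2}$ and $B=\|T\|^2$.

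Optimality is the one step needing care. The map $\alpha\mapsto g$ is a unitary isomorphism $\ell_2\to L_2$, so the optimal upper constant is $\sup_{\|g\|_2=1}\|Tg\|_2^2=\|T\|^2$. Likewise, the optimal lower constant is $\inf_{\|g\|_2=1}\|Tg\|_2^2$, and I must check this equals $\|T^{-1}\|^{-2}$. Substituting $h=Tg$, which ranges over all of $L_2$ by surjectivity, gives $\inf_g \|Tg\|/\|g\|=\big(\sup_h\|T^{-1}h\|/\|h\|\big)^{-1}=\|T^{-1}\|^{-1}$. So no smaller $B$ and no larger $A$ are admissible, which gives the claimed optimal constants.
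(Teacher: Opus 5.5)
Your proposal is correct and takes essentially the same approach as the paper: transport the basis properties of the trigonometric system $\mathcal Z^e$ through the bounded bijection $T$, whose inverse is bounded by the open mapping theorem. The paper only cites the standard textbook facts behind this, whereas you prove them by hand, including the optimality of $A=\|T^{-1}\|^{-2}$ and $B=\|T\|^2$ via the unitary identification $\alpha\mapsto\sum_n\alpha_n x_n$.
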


\begin{proof}
    In 
    \cite[Thm. 2.6, Prop. 3.7]{4} it was observed that the trigonometric system \begin{equation} \mathcal \{1\} \cup \{\sqrt 2 \cos(2 \pi J^T\cdot), \sqrt 2 \sin(2 \pi J^T \cdot): J \in  \Zdpos \} \end{equation} \label{ONB} forms an orthonormal basis of $L_2([0,1]^d)$ and a Schauder basis of $L_q([0,1]^d),$ whence the statement follows from \cite[p.75]{SetsofSpecialFunctions} and \cite[Prop. 3.6.8]{Basen}.
\end{proof}

The function systems $\mathcal Z^\tau$ are exactly of this form, if we choose $T=T_\tau$ for an appropriate $\tau \in \ell_1.$ Setting $\tau=e$ we regain the orthonormal basis from \eqref{ONB}. Another instance of such a function system is the system $\mathcal R_d$ from \eqref{Rd} which will be studied more closely in Section~\ref{sect-app}. 

\begin{lemma}[\textbf{Properties of $\mathcal Z^\tau$}] \label{ZystemSRBasis}
    Let $q \in (1,\infty), d \in \N$ and $\tau \in \ell_1.$ Consider $T_\tau:L_q([0,1]^d) \to L_q([0,1]^d).$
    \begin{enumerate}[label=(\roman*)]
        \item Let $C^\tau,S^\tau$ be defined as in \eqref{Funktionentau}, then it holds $C^\tau, S^\tau \in L_q([0,1]) \cap C([0,1])$.
        \item If $T_\tau$ is bijective, $\mathcal Z^\tau$ is a 
        Schauder basis of $L_q([0,1]^d)$ and a Riesz basis of $L_2([0,1]^d)$ with Riesz constants $A=\|T_\tau^{-1}\|^{-2}$ and $B=\|T_\tau\|^2.$ 
    \end{enumerate}
\end{lemma}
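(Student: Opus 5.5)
For part (i), the plan is to note that $\tau\in\ell_1$ and that $|\tau_k\cos(2\pi kx)|\le|\tau_k|$ and $|\tau_k\sin(2\pi kx)|\le|\tau_k|$ for all $x$. The Weierstrass M-test then shows that the series defining $C^\tau$ and $S^\tau$ in \eqref{Funktionentau} converge absolutely and uniformly on $\R$. Each partial sum is continuous and $1$-periodic, so the uniform limits $C^\tau$ and $S^\tau$ are continuous and $1$-periodic, and in particular lie in $C([0,1])$. Continuous functions on the compact interval $[0,1]$ are bounded, hence belong to $L_q([0,1])$ for every $q\in(1,\infty)$. The same uniform bound $\sqrt2\|\tau\|_{\ell_1}$ carries over to the compositions $C^\tau_J$ and $S^\tau_J$, so these are bounded continuous functions on $[0,1]^d$ as well.

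For part (ii), I would identify $\mathcal Z^\tau$ with the image of the trigonometric system under $T_\tau$ and then invoke Lemma~\ref{Mainbases} with $T=T_\tau$. By Theorem~\ref{Ttau}(i), $T_\tau\in\mathcal L(L_q([0,1]^d))$, and it is bijective by assumption. It remains to verify three identities.
\begin{enumerate}[label=(\alph*)]
\item $T_\tau(1)=\sum_k\tau_k\cdot 1=\tau_0$.
\item $T_\tau(\sqrt2 c_J)=C^\tau_J$.
\item $T_\tau(\sqrt2 s_J)=S^\tau_J$.
\end{enumerate}
For (b), write $\sqrt2c_J=h_J$ with $h=\sqrt2c\in L_q([0,1])$. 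A direct computation gives
\[
T_\tau h(x)=\sqrt2\sum_k\tau_k\cos(2\pi kx)=C^\tau(x),
\]
where the series converges both in $L_q$ and pointwise, by the uniform convergence from (i), so the two limits agree. Theorem~\ref{Ttau}(iv) then transfers this identity to $T_\tau(h_J)=C^\tau_J$ for all $J\in\Zdpos$. The sine case (c) is identical. With these identities in hand, Lemma~\ref{Mainbases} yields at once that $\mathcal Z^\tau$ is a Schauder basis of $L_q([0,1]^d)$. For $q=2$ it yields a Riesz basis with constants $A=\|T_\tau^{-1}\|^{-2}$ and $B=\|T_\tau\|^2$.

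The only delicate point is making sure that the operator-level series $\sum_k\tau_kG_k$ applied to $h$, which converges in $L_q$ norm, coincides with the pointwise-defined function $C^\tau$. This is settled by the uniform convergence from (i): the $L_q$ limit and the uniform limit of the partial sums must agree almost everywhere. Everything else is a direct application of earlier results.
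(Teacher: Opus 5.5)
Your proposal is correct and follows the paper's proof essentially step by step. For (i) you use the Weierstrass M-test with the bound $\sqrt2\|\tau\|_{\ell_1}$, and for (ii) you compute $T_\tau(1)=\tau_0$, $T_\tau(\sqrt2 c)=C^\tau$ and $T_\tau(\sqrt2 s)=S^\tau$, transfer these to all $J\in\Zdpos$ via Theorem~\ref{Ttau}(iv), and then apply Lemma~\ref{Mainbases}. Your extra remark, that the $L_q$-limit of $\sum_k\tau_kG_kh$ agrees almost everywhere with the uniform limit $C^\tau$, is left implicit in the paper and is handled correctly.
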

\begin{proof}
\begin{enumerate}[label=(\roman*)]
    \item For all $k \in \N$ the maps $\cos(2 \pi k \cdot), \sin(2 \pi k \cdot)$ are continuous and bounded by 1. Thus, for all $x \in [0,1]$ $$|C^\tau(x)|,|S^\tau(x)| \leq \sqrt 2 \sum \limits_{k=1}^\infty |\tau_k| = \sqrt 2 \|\tau\|_{\ell_1} < \infty$$ holds and the M-test of Weierstraß ensures uniform convergence and continuity of $C^\tau, S^\tau$. In particular, this implies Lebesgue measurability and $C^\tau, S^\tau \in L_q([0,1])$ as both functions are bounded (by $\sqrt 2 \|\tau\|_{\ell_1}).$
    \item This follows directly from combining Theorem \ref{Ttau}(i) and (iv) with Lemma \ref{Mainbases} and the calculation \begin{equation}
        T_\tau(1)=\sum \limits_{k=1}^\infty \tau_k=\tau_0, \quad T_\tau(\sqrt 2 c)=\sqrt 2 \sum \limits_{k=1}^\infty \tau _k c_k=C^\tau, \quad T_\tau(\sqrt 2 s)=\sqrt 2 \sum \limits_{k=1}^\infty \tau _k s_k=S^\tau.
    \end{equation} 
\end{enumerate}
\end{proof}

We are now able to give some handy criteria for when $\mathcal Z^\tau$ results in a basis of the Lebesgue spaces $L_q([0,1]^d)$. In case $q=2$ we achieve a Riesz basis and are able to estimate the Riesz constants in terms of $|\tau_1|$ and $\|\tau\|_{\ell_1}$. Note that we always may increase the upper Riesz constant and decrease the lower one.

\begin{theorem}[Basis criteria] \label{Basiscrit}
    Let $q \in (1,\infty), d \in \N$ and $\tau \in \ell_1.$
    \begin{enumerate}[label=(\roman*)]
        \item Let $\|\tau\|_{\ell_1}<2|\tau_1|,$ then $\mathcal Z^\tau$ is a Schauder basis of $L_q([0,1]^d).$ If $q=2$ it is a Riesz basis with Riesz constants $A=\frac 1 {\|\tau^{-1}\|_{\ell_1}^2} \geq (2|\tau_1|-\|\tau\|_{\ell_1})^2$ and $B=\|\tau\|_{\ell_1}^2.$
        \item Let $\tau_1 \neq 0$ and $\frac 1 {\tau_1} \cdot \tau$ be completely multiplicative. Then $\mathcal Z^\tau$ is a Schauder basis of $L_q([0,1]^d).$ If $q=2$ it is a Riesz basis with Riesz constants $A=\frac 1 {\|\tau^{-1}\|_{\ell_1}^2}
        =\frac{|\tau_1|^4}{\|\mu \cdot \tau\|_{\ell_1}^2} \geq\frac{|\tau_1|^4}{\|\tau\|_{\ell_1}^2}$ and $B=\|\tau\|_{\ell_1}^2,$ where $\mu$ is the Möbius function. 
    \end{enumerate}
\end{theorem}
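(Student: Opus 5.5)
Both parts follow one pattern: first show that the Dirichlet inverse $\tau^{-1}$ lies in $\ell_1$ and bound its norm, then invoke Theorem~\ref{InvTtau}(ii) and Lemma~\ref{ZystemSRBasis}(ii). By Theorem~\ref{InvTtau}(ii), $T_\tau$ is bijective with $T_\tau^{-1}=T_{\tau^{-1}}$. Lemma~\ref{ZystemSRBasis}(ii) then shows that $\mathcal Z^\tau$ is a Schauder basis of $L_q([0,1]^d)$, and for $q=2$ a Riesz basis with constants $\|T_\tau^{-1}\|^{-2}$ and $\|T_\tau\|^2$. Since one may always decrease the lower and increase the upper constant, it suffices to use Theorem~\ref{Ttau}(i) twice. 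It gives $\|T_\tau\|\le\|\tau\|_{\ell_1}$, so $B=\|\tau\|_{\ell_1}^2$ is admissible. It also gives $\|T_\tau^{-1}\|=\|T_{\tau^{-1}}\|\le\|\tau^{-1}\|_{\ell_1}$, so $A=\|\tau^{-1}\|_{\ell_1}^{-2}$ is admissible.

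For (i), Remark~\ref{DirichletNeumann} already shows that $\|\tau\|_{\ell_1}<2|\tau_1|$ forces $\tau_1\ne0$. It also yields $\tau^{-1}\in\ell_1$ with $\|\tau^{-1}\|_{\ell_1}\le (2|\tau_1|-\|\tau\|_{\ell_1})^{-1}$. Inverting and squaring gives $A\ge(2|\tau_1|-\|\tau\|_{\ell_1})^2$. The only point to check is that the Neumann series limit in $\ell_1$ coincides with the algebraic Dirichlet inverse. This holds because the $\ell_1$ limit is also the pointwise limit, and the remark identifies the pointwise limit of the partial sums $S_m$ as $\tau^{-1}$.

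For (ii), write $\tau=\tau_1\lambda$ with $\lambda:=\tau_1^{-1}\tau$ completely multiplicative; note $\lambda_1=1$, so $\lambda\not\equiv0$. By the discussion after Theorem~\ref{InvTtau}, $\lambda^{-1}=\mu\cdot\lambda$, hence $\tau^{-1}=\tau_1^{-1}\mu\cdot\lambda=\tau_1^{-2}\,\mu\cdot\tau$. Since $|\mu|\le1$, this lies in $\ell_1$. Its norm is $\|\tau^{-1}\|_{\ell_1}=|\tau_1|^{-2}\|\mu\cdot\tau\|_{\ell_1}\le|\tau_1|^{-2}\|\tau\|_{\ell_1}$. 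Hence $A=\|\tau^{-1}\|_{\ell_1}^{-2}=|\tau_1|^4/\|\mu\cdot\tau\|_{\ell_1}^2\ge|\tau_1|^4/\|\tau\|_{\ell_1}^2$, and $\|\mu\cdot\tau\|_{\ell_1}>0$ because $\mu_1\tau_1\neq0$. The identity $\lambda*(\mu\lambda)=e$ is the classical fact quoted earlier. If desired, it can be verified directly: $(\lambda*\mu\lambda)_n=\lambda_n\sum_{d\mid n}\mu_d=\lambda_n\delta_{1,n}$ by complete multiplicativity.

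There is no real obstacle here. The most delicate point is the identification of the Neumann limit with $\tau^{-1}$ in (i), which Remark~\ref{DirichletNeumann} handles.
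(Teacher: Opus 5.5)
Your proposal is correct and follows the paper's proof: you show $\tau^{-1}\in\ell_1$ with the stated norm bound (via Remark~\ref{DirichletNeumann} in (i), via $\tau^{-1}=\tau_1^{-2}\,\mu\cdot\tau$ in (ii)), then conclude with Theorem~\ref{InvTtau}(ii), Theorem~\ref{Ttau}(i) and Lemma~\ref{ZystemSRBasis}(ii). The only difference is that in (i) you get bijectivity from Theorem~\ref{InvTtau}(ii) rather than from the operator Neumann series of Theorem~\ref{Ttau}(ii), and this makes explicit the identity $T_\tau^{-1}=T_{\tau^{-1}}$, which the paper's terse proof leaves implicit but which is needed for the constant $A=\|\tau^{-1}\|_{\ell_1}^{-2}$.
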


\begin{proof}
    \begin{enumerate}[label=(\roman*)]
        \item This follows from the combination of Lemma \ref{ZystemSRBasis}(ii), Remark \ref{DirichletNeumann} and Theorem \ref{Ttau}(ii) and (iii).
        \item By \cite[Thm. 2.17]{Numertheory} the Dirichlet inverse of $\frac 1 {\tau_1} \cdot \tau$ is given by $\mu \cdot \frac 1 {\tau_1} \cdot \tau,$ therefore we may calculate the inverse of $\tau$ via $\tau^{-1}=(\tau_1 \cdot \frac 1 {\tau_1} \cdot \tau)^{-1}=\frac 1 {\tau_1^2} \cdot \mu \cdot \tau,$ for which it holds $$\|\tau^{-1}\|_{\ell_1}= \sum \limits_{k=1}^\infty \frac 1 {|\tau_1|^2} \cdot |\mu_k| \cdot |\tau_k| = \frac 1 {|\tau_1^2|} \|\mu \cdot \tau\|_{\ell_1} \leq \frac 1 {|\tau_1|^2} \|\tau\|_{\ell_1}< \infty$$ because $\mu$ is bounded by 1. Whence, $\tau$ and $\tau^{-1}$ are summable. By Theorem \ref{InvTtau}(ii) the operator $T_\tau$ is bijective and $T_\tau^{-1}=T_{\tau^{-1}}$ gives $\|T_\tau^{-1}\|\leq \|\tau^{-1}\|_{\ell_1} = \frac{\|\mu \cdot \tau\|_{\ell_1}}{|\tau_1|^2} \leq \frac{\|\tau\|_{\ell_1}}{|\tau_1|^2}.$ The rest follows from Lemma \ref{ZystemSRBasis}(ii).
    \end{enumerate}
\end{proof}

The two criteria of Theorem \ref{Basiscrit} are special cases of a characterisation of the sequences $\tau\in\ell_1$ that are invertible with respect to the Dirichlet convolution, which is due to Hewitt and Williamson. For $\tau\in\ell_1$ we consider the Dirichlet series $F_\tau(s):=\sum_{n=1}^\infty\tau_nn^{-s}$ for $s\in\C$ with $\operatorname{Re}s\geq0$ for which it converges absolutely.

\begin{corollary}[\textbf{A Wiener-type basis criterion}]\label{CorHW}
Let $q\in(1,\infty)$, $d\in\N$, and $\tau\in\ell_1$. The following two assertions are equivalent.
\begin{enumerate}[label=(\roman*)]
    \item $\inf_{\operatorname{Re}s\geq0}|F_\tau(s)|>0$.
    \item $\tau_1\neq0$ and the Dirichlet inverse $\tau^{-1}$ belongs to $\ell_1$.
\end{enumerate}
In case either assertion holds true the operator $T_\tau:L_q([0,1]^d) \to L_q([0,1]^d)$ is bijective with $T_\tau^{-1}=T_{\tau^{-1}}$, $\mathcal Z^\tau$ is a Schauder basis of $L_q([0,1]^d)$, and for $q=2$ it is a Riesz basis with Riesz constants $A=\|\tau^{-1}\|_{\ell_1}^{-2}$ and $B=\|\tau\|_{\ell_1}^2$.
\end{corollary}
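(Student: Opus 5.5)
The equivalence of (i) and (ii) is the Hewitt--Williamson theorem \cite[Thm. 1]{HewittWilliamson1957}. We will not reprove it; instead we identify it with our setting. First we note that $\tau\mapsto F_\tau$ is an algebra isomorphism from $(\ell_1,+,*)$ onto the algebra of absolutely convergent Dirichlet series on $\operatorname{Re}s\geq 0$. Indeed, for $\tau,\lambda\in\ell_1$ the absolute convergence from \eqref{Submult} allows rearranging $F_\tau(s)F_\lambda(s)=\sum_{k,j}\tau_k\lambda_j (kj)^{-s}=F_{\tau*\lambda}(s)$, and $F_e\equiv 1$.

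For (ii)$\Rightarrow$(i), the implication is elementary. If $\tau^{-1}\in\ell_1$, then $F_\tau F_{\tau^{-1}}=F_e=1$ on $\operatorname{Re}s\geq0$. Since $|n^{-s}|=n^{-\operatorname{Re}s}\leq 1$, we have $|F_{\tau^{-1}}(s)|\leq\|\tau^{-1}\|_{\ell_1}$. Hence $|F_\tau(s)|\geq \|\tau^{-1}\|_{\ell_1}^{-1}>0$ uniformly.

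For (i)$\Rightarrow$(ii), we first get $\tau_1\neq0$ by letting $s=\sigma\to+\infty$ along the real axis. Dominated convergence gives $F_\tau(\sigma)\to\tau_1$, so $|\tau_1|\geq\inf|F_\tau|>0$. Thus the Dirichlet inverse $\tau^{-1}$ exists as a sequence. The nontrivial part, namely that it lies in $\ell_1$, is exactly the content of \cite[Thm. 1]{HewittWilliamson1957}, which states that an element of the Banach algebra $(\ell_1,*)$ is invertible in $\ell_1$ if and only if its Dirichlet series is bounded away from zero on $\operatorname{Re}s\ge0$. This is the Gelfand-theoretic step: the characters of $(\ell_1,*)$ correspond to completely multiplicative functions of modulus at most one, and their values can be approximated by $n^{-s}$, $\operatorname{Re}s\geq0$, via Kronecker-type density on the primes. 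This step is the main obstacle, and we simply quote it. The $\ell_1$-inverse it provides coincides with the Dirichlet inverse by uniqueness of inverses in the ring of sequences.

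The remaining assertions follow from (ii). By Theorem~\ref{InvTtau}(ii), $T_\tau$ is bijective with $T_\tau^{-1}=T_{\tau^{-1}}$. Then Lemma~\ref{ZystemSRBasis}(ii) shows that $\mathcal Z^\tau$ is a Schauder basis of $L_q([0,1]^d)$, and for $q=2$ a Riesz basis with constants $\|T_\tau^{-1}\|^{-2}$ and $\|T_\tau\|^2$. Theorem~\ref{Ttau}(i) gives $\|T_\tau\|\leq\|\tau\|_{\ell_1}$ and $\|T_{\tau^{-1}}\|\leq\|\tau^{-1}\|_{\ell_1}$, so $A=\|\tau^{-1}\|_{\ell_1}^{-2}$ and $B=\|\tau\|_{\ell_1}^2$ are admissible Riesz constants, since one may always decrease $A$ and increase $B$.
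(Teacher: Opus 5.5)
Your proposal is correct and follows the paper's proof: both quote Hewitt--Williamson \cite[Thm.~1]{HewittWilliamson1957} for the equivalence and then combine Theorem~\ref{InvTtau}(ii), Lemma~\ref{ZystemSRBasis}(ii) and Theorem~\ref{Ttau}(i), and your additions (the elementary proofs of (ii)$\Rightarrow$(i) and of $\tau_1\neq0$ from (i)) are correct. One caveat on your heuristic sketch of the quoted step, which does not affect your proof since you only cite the theorem: the characters $(p^{-s})_p$ with $\operatorname{Re}s\geq0$ are not dense in $\overline{\mathbb D}^\infty$, because their moduli $p^{-\operatorname{Re}s}$ all depend on a single parameter (so, e.g.\ the character with $h(2)=0$, $h(3)=1$ is not approximated), hence Kronecker's theorem alone does not reach every character and the actual Hewitt--Williamson argument needs more than density.
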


\begin{proof}
The equivalence of (i) and (ii) is \cite[Thm. 1]{HewittWilliamson1957}. Note that the inverse in $\ell_1$ is the Dirichlet inverse, which is real if $\tau$ is real, and that $\tau_1\neq0$ is equivalent to its existence. If (ii) holds, Theorem \ref{InvTtau}(ii) shows that $T_\tau$ is bijective with $T_\tau^{-1}=T_{\tau^{-1}}$. By Lemma \ref{ZystemSRBasis}(ii), $\mathcal Z^\tau$ is a Schauder basis of $L_q([0,1]^d)$ and for $q=2$ a Riesz basis with constants $\|T_\tau^{-1}\|^{-2}$ and $\|T_\tau\|^2$. Since $\|T_\tau\|\leq\|\tau\|_{\ell_1}$ and $\|T_\tau^{-1}\|=\|T_{\tau^{-1}}\|\leq\|\tau^{-1}\|_{\ell_1}$ by Theorem \ref{Ttau}(i), the constants $A=\|\tau^{-1}\|_{\ell_1}^{-2}$ and $B=\|\tau\|_{\ell_1}^2$ are admissible.
\end{proof}

The hypothesis of Theorem \ref{Basiscrit}(i) directly implies condition (i) of Corollary \ref{CorHW}: If $\|\tau\|_{\ell_1}<2|\tau_1|$, then $|F_\tau(s)|\geq|\tau_1|-\sum_{n\geq2}|\tau_n|=2|\tau_1|-\|\tau\|_{\ell_1}>0$ for $\operatorname{Re}s\geq0$. Condition (i) is often easy to verify when $F_\tau$ has a product representation, for instance an Euler product, which is the case if $\frac1{\tau_1}\tau$ is completely multiplicative.

\begin{remark}{\textbf{(Repeated transforms of $\mathcal Z^\tau$).}}
    In order to obtain Riesz and Schauder bases of the Lebesgue spaces we chose the orthonormal basis $\mathcal Z^e= \{1\} \cup \{\sqrt 2 \cos(2 \pi J^T\cdot), \sqrt 2 \sin(2 \pi J^T \cdot): J \in 
    \Zdpos\}$ as a starting point and transformed this system using a bijective operator $T_\tau$ resulting in the system $\mathcal Z^\tau.$ Since Riesz and Schauder bases also are suitable starting points for a transformation, one might consider to transform $\mathcal Z^\tau$ again with another bijective operator $T_\lambda$ in order to achieve new systems unobtainable from $\mathcal Z^e$ in a single transformation. However, due to Theorem \ref{AlgTtau} this 
    is not necessary as the transformation of $\mathcal Z^\tau$ with $T_\lambda$ results in the transformation of $\mathcal Z^e$ with the map $T_{\tau*\lambda}.$ However, additional restrictions like $2|\tau_1|>\|\tau\|_{\ell_1}$ may be easier met by an application of multiple operators rather than a single composite operator.
\end{remark}

We next investigate the coefficients of expressions 
within the system $\mathcal Z^\tau.$

\begin{proposition}[\textbf{Coefficients}] \label{Coeff}
    Let $q \in (1,\infty), d \in \N, \tau \in \ell_1$ and $a,b:\Zdpos \to \R.$
    \begin{enumerate}[label=(\roman*)]
        \item If $\sum \limits_{J \in \Zdpos} a_J \sqrt 2 c_J + b_J \sqrt 2 s_J$ converges in $L_q([0,1]^d),$ it holds \begin{equation}
            T_\tau \Bigg(\sum \limits_{J \in \Zdpos} a_J \sqrt 2 c_J + b_J \sqrt 2 s_J \Bigg) = \sum \limits_{J \in \Zdpos} (\tau *_da)_J \sqrt 2 c_J + (\tau*_db)_J \sqrt 2 s_J.
        \end{equation}
        \item Let $\tau_1 \neq 0.$ If $f=\sum \limits_{J \in \Zdpos} a_J C^\tau_J + b_J S^\tau_J$ converges in $L_q([0,1]^d),$ it holds $a=\tau^{-1} *_d \hat a$ and $b=\tau^{-1}*_d \hat b$ where $\hat a_J:=\int_{[0,1]^d} f(x) \sqrt 2 c_J(x) \mathrm d x$ and $\hat b_J:=\int_{[0,1]^d} f(x) \sqrt 2 s_J(x) \mathrm d x$ for all $J \in \Zdpos.$
    \end{enumerate}
\end{proposition}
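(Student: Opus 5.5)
Part (i) rests on two facts: $T_\tau$ is bounded (Theorem~\ref{Ttau}(i)) and it acts diagonally-by-dilation on single trigonometric functions. Since $G_k(c_J)(x)=\cos(2\pi J^T(kx))=c_{kJ}(x)$ and likewise $G_k s_J=s_{kJ}$, we get $T_\tau c_J=\sum_k\tau_k c_{kJ}$ and $T_\tau s_J=\sum_k\tau_k s_{kJ}$, with absolute convergence in $L_q$ because $\|c_{kJ}\|_q\le 1$.

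Let $f=\sum_J a_J\sqrt2 c_J+b_J\sqrt2 s_J$. Continuity of $T_\tau$ lets us apply it termwise to the (Pringsheim) partial sums $f_M$. This gives
\[
T_\tau f=\lim_M \sum_{J\in F_M}\sum_{k}\tau_k\bigl(a_J\sqrt2 c_{kJ}+b_J\sqrt2 s_{kJ}\bigr).
\]
The right-hand side cannot simply be regrouped, because the double sum need not converge absolutely in $L_q$. I would therefore identify coefficients instead of rearranging. For each fixed $N\in\Zdpos$, pair $T_\tau f$ with $\sqrt2 c_N$ and $\sqrt2 s_N$; these are bounded functions, so the pairing is a continuous functional on $L_q$. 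By orthogonality of the trigonometric system, together with the fact that $c_{kJ}=c_N$ exactly when $kJ=N$ (using $J,N\in\Zdpos$ and Lemma~\ref{kJ=N}), the pairing of the $M$-th partial sum equals $\sum_{k\mid N_0,\;k^{-1}N\in F_M}\tau_k a_{k^{-1}N}$. This sum has finitely many terms and stabilises to $(\tau*_d a)_N$ once $M$ is large. The sine coefficients give $(\tau*_d b)_N$ in the same way, and the constant coefficient vanishes.

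So $T_\tau f$ is an $L_q$ function whose trigonometric coefficients are $(\tau*_da)_N$ and $(\tau*_db)_N$, with mean $0$. Since $\mathcal Z^e$ is a Schauder basis of $L_q([0,1]^d)$ (Lemma~\ref{Mainbases} and \cite{4}), its expansion converges, and by uniqueness the coefficients in that expansion are exactly these pairings. This is precisely the claimed identity, with the right-hand series converging in $L_q$. The main subtlety is this step: we avoid rearranging the double series and instead use that basis expansions are determined by the coefficient functionals. This needs the coefficient functionals of the trigonometric Schauder basis to coincide with the $L_2$ inner products, which holds because the trigonometric system is biorthogonal to itself.

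For (ii), write $f=\sum_J a_J C^\tau_J+b_J S^\tau_J$. By Theorem~\ref{Ttau}(iv) and Lemma~\ref{ZystemSRBasis}, $C^\tau_J=T_\tau(\sqrt2 c_J)$ and $S^\tau_J=T_\tau(\sqrt2 s_J)$. The partial sums of $f$ are therefore $T_\tau$ applied to partial sums $g_M$ of $g:=\sum a_J\sqrt2c_J+b_J\sqrt2 s_J$. We cannot assume $g$ converges, since $T_\tau^{-1}$ need not be bounded. Instead, compute coefficients directly: pairing $f$ with $\sqrt2 c_N$ is continuous, and on the partial sums the pairing is the finite sum computed above. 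Letting $M\to\infty$ gives $\hat a=\tau*_d a$ and $\hat b=\tau*_d b$ pointwise. Finally, apply $\tau^{-1}*_d$ and use Lemma~\ref{dDirichlet}(ii),(iii) to get $\tau^{-1}*_d\hat a=(\tau^{-1}*\tau)*_d a=e*_d a=a$; here $\tau^{-1}$ exists since $\tau_1\neq0$, and all sums involved are finite. The same argument gives $b=\tau^{-1}*_d\hat b$.
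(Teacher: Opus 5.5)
Your proposal is correct. Part (ii) is essentially the paper's argument: you pair $f$ with the bounded functions $\sqrt2c_N$ and $\sqrt2s_N$, pass to the limit along partial sums to get $\hat a=\tau*_da$ and $\hat b=\tau*_db$, and invert with Lemma~\ref{dDirichlet}(ii),(iii). You also make explicit why this cannot go through $T_\tau^{-1}$: only $\tau_1\neq0$ is assumed. Part (i) is where you take a genuinely different route. The paper applies $T_\tau$ termwise and writes $T_\tau f$ as the double series $\sum_k\sum_J\tau_k(a_J\sqrt2c_{kJ}+b_J\sqrt2s_{kJ})$. It then simply regroups all terms with $kJ=N$ into $\sum_N(\tau*_da)_N\sqrt2c_N+(\tau*_db)_N\sqrt2s_N$. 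This is a short formal computation, but the regrouping is not justified there. The double family is in general not absolutely summable in $L_q$, and the step tacitly assumes that the regrouped series converges. You avoid any rearrangement. You identify the trigonometric coefficients of $T_\tau f$ through continuous functionals, and then use that $\mathcal Z^e$ is a Schauder basis of $L_q([0,1]^d)$ whose coefficient functionals are the inner products, by biorthogonality. The only extra input is the basis property the paper already uses in Lemma~\ref{Mainbases}. In return you get a rigorous proof that the right-hand series converges in $L_q$, in the basis enumeration, and sums to $T_\tau f$. The paper's regrouping is immediate, for instance, when $a,b\in\ell_1$, but needs an argument in general.
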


\begin{proof}
    \begin{enumerate}[label=(\roman*)]
        \item Because $T_\tau$ is linear and bounded, by Theorem \ref{Ttau}(i) we get \begin{align*} T_\tau\Bigg(\sum \limits_{J \in \Zdpos} a_J \sqrt 2 c_J + b_J \sqrt 2 s_J\Bigg) &= \sum \limits_{J \in \Zdpos} a_J \sqrt 2 T_\tau(c_J) + b_J \sqrt 2 T_\tau(s_J) \\ &= \sum \limits_{k=1}^\infty \sum \limits_{J \in \Zdpos} \tau_k a_J \sqrt 2 c_{kJ} + \tau_k b_J \sqrt 2 s_{kJ}\\&=\sum \limits_{N \in \Zdpos} \left(\sum \limits_{\substack{(k,J) \in \N \times \Zdpos \\ kJ=N}} \tau_k a_J \right) \sqrt 2 c_N + \left(\sum \limits_{\substack{(k,J) \in \N \times \Zdpos \\ kJ=N}} \tau_k b_J \right) \sqrt 2 s_N \\ &= \sum \limits_{N \in \Zdpos} (\tau*_da)_N \sqrt 2 c_N + (\tau*_db)_J \sqrt 2 s_N. \end{align*}
        \item The sequences $\hat a, \hat b:\Zdpos\rightarrow \mathbb{R}$ are well-defined because every integral converges by Hölder's inequality and Lemma \ref{ZystemSRBasis}(i) applied to the conjugate exponent $q'.$ The Dirichlet inverse of $\tau$ exists because $\tau_1 \neq 0$ is assumed. We calculate \begin{align*} \hat a_J=\int_{[0,1]^d} f(x) \sqrt 2 c_J(x) \mathrm d x &= \sum \limits_{N \in \Zdpos} a_N \int_{[0,1]^d} C^\tau_N(x) \sqrt 2c_J(x) \mathrm d x \\ &= \sum \limits_{N \in \Zdpos} \sum \limits_{k=1}^\infty a_N \tau_k \int_{[0,1]^d} \sqrt 2 c_{kN}(x) \sqrt 2 c_J(x) \mathrm d x = \sum \limits_{\substack{(k,N) \in \N \times \Zdpos \\ kN=J}} \tau_k a_N = (\tau *_da)_J. \end{align*} Using Lemma \ref{dDirichlet}(ii),(iii) we deduce $$a=e*_da=(\tau^{-1}*\tau)*_da=\tau^{-1}*_d(\tau*_da)=\tau^{-1}*_d\hat a.$$ Switching $a$ with $b, c$ with $s$ as well as $C^\tau$ with $S^\tau$ shows $\hat b_J=(\tau *_d b)_J$ in the same way.
    \end{enumerate}
\end{proof}

We shortly highlight another way to calculate $a$ and $b$ than first computing $\tau^{-1}$ and afterwards evaluating $\tau^{-1}*_d\hat a$ and $\tau^{-1}*_d\hat b$. The formula $\hat a = \tau *_d a$ can be written as $\int_{[0,1]^d} f(x) \sqrt 2 c_J(x) \mathrm d x=\sum \limits_{k|J_0} \tau_k a_{k^{-1}J}$ from which we subtract $\sum \limits_{k|J_0, k \neq 1} \tau_k a_{k^{-1}J}$ and afterwards divide by $\tau_1 \neq 0$ to get $$a_J=\frac{\int_{[0,1]^d} f(x) \sqrt 2 c_J(x) \mathrm d x-\sum \limits_{k|J_0, k \neq 1} \tau_k a_{k^{-1}J}}{\tau_1}.$$ 
Similarly, one sees that $$b_J=\frac{\int_{[0,1]^d} f(x) \sqrt 2 s_J(x) \mathrm d x-\sum \limits_{k|J_0, k \neq 1} \tau_k b_{k^{-1}J}}{\tau_1}.$$ Both formulas can be evaluated recursively starting with all $J \in \Zdpos$ where $J_0=1$ and afterwards evaluating those $J \in \Zdpos$ where $J_0$ is prime, a product of two, three, four, ... primes.

\subsection{Necessary conditions and the sequence spaces $\hat\ell_d$}\rev{\label{subsect-nec}}
So far we have given sufficient conditions on $\tau$ for $\mathcal Z^\tau$ to be a basis. In this subsection we turn to necessary conditions. 
We 
first turn to the question of surjectivity of the operator $T_\tau.$

\begin{lemma}[\textbf{Surjectivity of $T_\tau$}] \label{SurjTtau}
    Let $q \in (1,\infty), d \in \N$ and $\tau \in \ell_1.$ Let $T_\tau:L_q([0,1]^d) \to L_q([0,1]^d)$ be surjective, then $\mathcal Z^\tau$ is a Schauder basis of $L_q([0,1]^d)$ and it holds $\tau_1 \neq 0 \neq \tau_0.$ 
\end{lemma}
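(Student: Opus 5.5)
We must show three things: $\tau_1\neq0$, $\tau_0\neq0$, and that $\mathcal Z^\tau$ is a Schauder basis. Once $T_\tau$ is shown to be bijective, the basis property follows directly from Lemma~\ref{ZystemSRBasis}(ii). So the plan is to extract $\tau_1\neq 0$ and $\tau_0\neq0$ from surjectivity, and then obtain injectivity from Theorem~\ref{InvTtau}(i).

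\textbf{Step 1: $\tau_1\neq0$.} Suppose $\tau_1=0$ and let $v=(1,0,\dots,0)\in\Zdpos$, so that $v_0=1$. By surjectivity there is $h\in L_q([0,1]^d)$ with $T_\tau h=\sqrt2 c_v$. Expand $h$ in the trigonometric Schauder basis, $h=h_0+\sum_J a_J\sqrt2 c_J+b_J\sqrt2 s_J$, where the convergence is in $L_q$ in the chosen ordering. Apply Proposition~\ref{Coeff}(i) and note that $T_\tau 1=\tau_0$. Then the coefficient of $\sqrt2 c_v$ in $T_\tau h$ equals $(\tau*_d a)_v=\sum_{k\mid 1}\tau_k a_{v}=\tau_1 a_v=0$. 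To justify reading off this coefficient, pair $T_\tau h$ with $\sqrt2 c_v\in L_{q'}$. Each partial sum is a trigonometric polynomial, and $T_\tau$ is continuous, so the coefficient functional is continuous on $L_q$. Hence $\int T_\tau h\,\sqrt2 c_v\,\mathrm dx=(\tau*_da)_v=0$. On the other hand, $\int \sqrt2 c_v\sqrt2 c_v\,\mathrm dx=1$. This is a contradiction, so $\tau_1\neq0$.

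\textbf{Step 2: $\tau_0\neq0$.} The same argument applied to the constant function $1$ gives the claim. The functions $T_\tau(\sqrt2 c_J)$ and $T_\tau(\sqrt2 s_J)$ are series in $c_{kJ}$ and $s_{kJ}$, each with zero mean. By continuity, the mean of $T_\tau h$ is therefore $\tau_0 h_0$. If $T_\tau h=1$, then $\tau_0h_0=1$, so $\tau_0\neq0$.

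\textbf{Step 3: conclusion.} Since $\tau_0\neq0$, Theorem~\ref{InvTtau}(i) shows that $T_\tau$ is injective. Together with surjectivity, $T_\tau$ is bijective, and Lemma~\ref{ZystemSRBasis}(ii) gives that $\mathcal Z^\tau$ is a Schauder basis. By the bounded inverse theorem, $T_\tau^{-1}$ is bounded, as Lemma~\ref{Mainbases} requires.

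The only delicate point is the justification in Step 1 that coefficients commute with $T_\tau$ and with the $L_q$-convergent expansion. I handle this by using continuity of $T_\tau$ together with continuity of the functionals $g\mapsto\int g\sqrt2c_N$ on $L_q$, exactly as in Proposition~\ref{Coeff}(i).
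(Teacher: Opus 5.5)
Your proof is correct, but your route to $\tau_1\neq0$ differs from the paper's.

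\textbf{The paper's route.} It first shows $\tau_0\neq0$, because the constant $1$ is not in the range. From this it deduces that $T_\tau$ is bijective and that $\mathcal Z^\tau$ is a Schauder basis. Only then does it assume $\tau_1=0$ and expand $\sqrt2c_{\mathbf 1}$ in $\mathcal Z^\tau$. Proposition~\ref{Coeff}(ii) removes the sine coefficients, the mean removes the constant, and orthogonality ($kJ\neq\mathbf 1$ for $k\geq2$) yields $1=0$.

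\textbf{Your route.} You use surjectivity directly. The functional $\phi(g)=\int g\,\sqrt2c_v$ satisfies $\phi(T_\tau h)=\tau_1\phi(h)$ for all $h$. So if $\tau_1=0$, the range of $T_\tau$ lies in $\ker\phi$ and misses $\sqrt2c_v$. This runs exactly parallel to your $\tau_0$-argument, where $\phi(g)=\int g$ and the factor is $\tau_0$. In both cases $1$ and $c_v$ are eigenvectors of the adjoint $T_\tau^*$ with eigenvalues $\tau_0$ and $\tau_1$, and surjectivity of $T_\tau$ forbids a zero eigenvalue.

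\textbf{What your route gains.}
\begin{itemize}
\item The two necessary conditions are obtained independently of each other.
\item You do not need the basis property of $\mathcal Z^\tau$ first.
\item You avoid Proposition~\ref{Coeff}(ii), whose stated hypothesis is $\tau_1\neq0$, the very claim being proved. The paper's use of it only needs $\hat b=\tau*_db$ together with Lemma~\ref{dDirichlet}(iv), but your argument sidesteps the issue entirely.
\end{itemize}

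\textbf{A small imprecision.} $T_\tau$ applied to a partial sum is not itself a trigonometric polynomial. The identity at the level of partial sums really rests on the norm convergence of $T_\tau=\sum_k\tau_kG_k$. You can even skip the expansion of $h$: for $k\geq2$, $G_kh$ is $\frac1k$-periodic in $x_1$, so $\int G_kh\,c_v=0$ for every $h\in L_q([0,1]^d)$. Summing over $k$ gives $\phi\circ T_\tau=\tau_1\phi$ at once.
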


\begin{proof}
    Assume $\tau_0=0.$ Any $f \in L_q([0,1]^d)$ can be written as $f=f_0+\sum \limits_{J \in \Zdpos} a_J \sqrt 2 c_J + b_J \sqrt 2 s_J$ for some $a,b:\Zdpos \to \R.$ According to Proposition \ref{Coeff}(i) it holds $T_\tau f = \sum \limits_{J \in \Zdpos} (\tau*_da)_J \sqrt 2 c_J + (\tau*_db)_J \sqrt 2 s_J,$ whence 1 is not in the image of $T_\tau,$ a contradiction to $T_\tau$ being surjective. 
    If $\tau_0\neq 0$, it follows from Theorem \ref{InvTtau}(i)  that $T_\tau$ is injective and consequently bijective. From Lemma \ref{ZystemSRBasis}(ii) we infer that $\mathcal Z^\tau$ is a Schauder basis. Now assume $\tau_1=0$ and consider $g=\sqrt 2c_\textbf{1}.$ Because $\mathcal Z^\tau$ is a Schauder basis we may write $g=g_0 \tau_0+\sum \limits_{J \in \Zdpos} a_J C^\tau_J + b_J S^\tau_J,$ where $g_0 \in \R$ and $a,b:\Zdpos \to \R.$ Using Proposition \ref{Coeff}(ii) we calculate $b=0$ and from $0=\int_{[0,1]^d}g(x) \mathrm d x = g_0 \tau_0$ we see $g_0=0.$ Using that $g$ is bounded we arrive at the contradiction \begin{align*}
        1=\int_{[0,1]^d} g(x) g(x) \mathrm d x = \sum \limits_{J \in \Zdpos} a_J \int_{[0,1]^d} g(x) C_J^\tau(x) \mathrm d x = \sum \limits_{J \in \Zdpos} \sum \limits_{k=2}^\infty a_J \tau_k \int_{[0,1]^d} \sqrt 2 c_\textbf{1}(x) \sqrt 2 c_{kJ}(x) \mathrm d x = 0,
    \end{align*} because $kJ \neq \textbf{1}$ holds for all $k \in \N_{\geq 2}$ and $J \in \Zdpos$ according to Lemma \ref{kJ=N}. Whence $\tau_1 \neq 0.$
\end{proof}

When investigating $L_q([0,1]^d)_0$ instead we need to modify the argumentation a little bit. One would get from Theorem \ref{InvTtau}(i) that $T_\tau$ is bijective and a similar 
reasoning as in Lemma \ref{ZystemSRBasis}(ii) 
gives that  $\mathcal Z^\tau \backslash \{\tau_0\}$ is a Schauder basis of $L_q([0,1]^d)_0.$ The remaining part of the proof would then show $\tau_1 \neq 0.$ However, in this case $\tau_0$ might be zero.

We give another necessary condition in the Hilbert space case. We use the fact that if $\mathcal Z^\tau$ is a Schauder basis, then $\text{span}(\mathcal Z^d) ^\perp=L_2([0,1]^d)^\perp=\{0\}.$

\begin{lemma}[\textbf{The Hilbert case}] \label{Hilbert}
    Let $d \in \N$ and $\tau \in \ell_1.$ If $T_\tau:L_2([0,1]^d) \to L_2([0,1]^d)$ is surjective, then $$\bigg\{a \in \ell_2(\Zdpos):\sum \limits_{i=1}^\infty a_{iJ} \tau_i = 0 \text{ for all } J \in \Zdpos \bigg\}=\{0\}.$$
\end{lemma}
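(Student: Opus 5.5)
Given $a\in\ell_2(\Zdpos)$ with $\sum_{i=1}^\infty a_{iJ}\tau_i=0$ for all $J\in\Zdpos$, I build an $L_2$ function orthogonal to every element of $\mathcal Z^\tau$ and use completeness. Put
\[
h:=\sum_{N\in\Zdpos} a_N\sqrt2\, c_N\in L_2([0,1]^d),
\]
which converges because $a\in\ell_2(\Zdpos)$ and the trigonometric system \eqref{ONB} is orthonormal. The aim is to show that $h$ is orthogonal to $\mathcal Z^\tau$; completeness will then force $h=0$, and hence $a=0$.

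First, $T_\tau$ is surjective by assumption. By Lemma~\ref{SurjTtau} (with $q=2$), $\mathcal Z^\tau$ is a Schauder basis of $L_2([0,1]^d)$, so its span is dense and $\operatorname{span}(\mathcal Z^\tau)^\perp=\{0\}$.

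Second, I check orthogonality element by element:
\begin{itemize}
\item $\langle h,\tau_0\rangle=0$, since each $c_N$ has mean zero for $N\neq0$.
\item $\langle h,S^\tau_J\rangle=0$ for every $J\in\Zdpos$, since cosines and sines of the orthonormal system are mutually orthogonal.
\item For $C^\tau_J$, write $C^\tau_J=\sqrt2\sum_{i}\tau_i c_{iJ}$; this series converges uniformly by Lemma~\ref{ZystemSRBasis}(i) and hence in $L_2$. Continuity of the inner product then gives
\[
\langle h,C^\tau_J\rangle=\sum_{i=1}^\infty \tau_i\langle h,\sqrt2 c_{iJ}\rangle=\sum_{i=1}^\infty \tau_i a_{iJ}=0 .
\]
\end{itemize}
Here $iJ\in\Zdpos$, so $\langle h,\sqrt2c_{iJ}\rangle=a_{iJ}$ by orthonormality.

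Therefore $h\in\operatorname{span}(\mathcal Z^\tau)^\perp=\{0\}$, so $h=0$ and all its Fourier coefficients $a_N$ vanish. This gives $a=0$. The argument also yields the analogous statement with sines.

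The only subtle point is interchanging the sum with the inner product. The uniform (hence $L_2$) convergence from Lemma~\ref{ZystemSRBasis}(i), together with the absolute convergence of $\sum_i\tau_i a_{iJ}$ (since $\tau\in\ell_1$ and $a$ is bounded), handles it.
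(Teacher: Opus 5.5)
Your proposal is correct and follows essentially the same route as the paper: build $h=\sum_{N} a_N\sqrt2\,c_N$, verify that it is orthogonal to the constant and to every $C^\tau_J$ and $S^\tau_J$, and conclude $h=0$ because surjectivity makes $\mathcal Z^\tau$ a Schauder basis (Lemma~\ref{SurjTtau}). Your justification of exchanging the sum over $i$ with the inner product, via uniform convergence of $C^\tau_J$, is in fact spelled out more carefully than in the paper.
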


\begin{proof}
     Let $a \in \ell_2(\Zdpos)$ such that $\sum \limits_{i=1}^\infty a_{iJ} \tau_i = 0 \text{ for all } J \in \Zdpos$. Then $f:= \sum \limits_{K \in \Zdpos}^\infty a_K \sqrt 2 c_K$ converges in $L_2([0,1]^d).$ Further, we  calculate $$\int_{[0,1]^d} f(x) C^\tau_J(x) \mathrm d x = \sum \limits_{K \in \Zdpos}^\infty \sum \limits_{i=1}^\infty a_K \tau_i \int_{[0,1]^d} \sqrt 2 c_K(x) \sqrt 2 c_{iJ}(x) \mathrm d x= \sum \limits_{i=1}^\infty a_{iJ} \tau_i=0$$ as well as $\int_{[0,1]^d} f(x) \mathrm d x=0=\int_{[0,1]^d} f(x) S^\tau_J(x) \mathrm d x.$ Thus, $f=0$ and $a=0$ follow. 
\end{proof}

\begin{remark}[\textbf{The case} $q \neq 2$]
     The case $q \neq 2$ can be studied in a similar fashion. 
     Let $T_\tau:L_q([0,1]^d)\rightarrow L_q([0,1]^d)$  be surjective.
     Using the Hahn-Banach theorem and the characterisation of the dual space of $L_q([0,1]^d)$, one sees that the only $f \in L_{q'}([0,1]^d)$ such that $\int_{[0,1]^d} f(x) \mathrm d x=\int_{[0,1]^d} f(x) S^\tau_J(x) \mathrm d x=\int_{[0,1]^d} f(x) C^\tau_J(x) \mathrm d x=0$ for all $J \in \Zdpos$ is the zero function. Here, $q'=
     \frac{q}{q-1} \in (1,\infty)$ is the dual exponent. Since $\mathcal Z^e$ is a Schauder basis of $L_{q'}([0,1]^d)$ we can do an analogous calculation as before, however, one does not know which sequences to check because this depends on the convergence of the corresponding series in $L_{q'}([0,1]^d).$
 \end{remark}

Proposition \ref{Coeff} also allows a different approach to our setting in the Hilbert space case. 

\begin{proposition}[\textbf{The sequence spaces} $\hat\ell_d$] \rev{\label{ellhat}}
Let $$\hat\ell_d :=\left\{\tau: \N \to \R \vert \ 
\exists c=c(\tau) \in [0,\infty) \text{ such that } \|\tau*_da\|_{\ell_2} \leq c \|a\|_{\ell_2} \ 
\forall\, a \in \ell_2(\Zdpos)\right\}$$ and let $\|\tau\|_{\hat\ell_d}$ be the infimum of all such $c.$ 

\begin{enumerate}[label=(\roman*)]
\item $(\hat\ell_d,\|\cdot\|_{\hat\ell_d})$ is a normed algebra, $\|\cdot\|_{\hat\ell_d}$ is sub-multiplicative, that is $\|\tau*\mu\|_{\hat\ell_d} \leq \|\tau\|_{\hat\ell_d} \|\mu\|_{\hat\ell_d}$ for all $\tau, \mu \in \hat\ell_d$ and we have the embeddings $\ell_1 \hookrightarrow \hat\ell_d \hookrightarrow \ell_2.$ 
$\hat \ell_d$ is commutative and unital with unit $e$ and $\|e\|_{\hat \ell_d}=1$.
\item Let $d \in \N$, then $$S_\tau: L_2([0,1]^d)_0 \to L_2([0,1]^d)_0, \sum \limits_{J \in \Zdpos} a_J \sqrt 2 c_J + b_J \sqrt 2 s_J \mapsto \sum \limits_{J \in \Zdpos} (\tau *_d a)_J \sqrt 2 c_J + (\tau *_d b)_J \sqrt 2 s_J$$ is a well-defined and bounded linear operator if, and only if, $\tau \in \hat\ell_d$ holds, in which case $\|S_\tau\|=\|\tau\|_{\hat\ell_d}$ holds true. $S_\tau$ is bijective if, and only if, $\tau_1 \neq 0$ and $\tau^{-1} \in \hat\ell_d$ holds, where $\tau^{-1}$ is the Dirichlet inverse, in which case $S_\tau^{-1}=S_{\tau^{-1}}$ holds true.
\item Given $\tau \in \ell_2$ define $C^\tau, S^\tau$ as in \eqref{Funktionentau}, then $C^\tau_J, S^\tau_J \in L_2([0,1]^d)$ for all $d \in \N$ and $J \in \Zdpos.$ Define $\mathcal{Z}^\tau_0:=\mathcal Z^\tau \backslash \{\tau_0\},$
then 
is a Riesz basis of $L_2([0,1]^d)_0$ if, and only if, 
$\tau_1\neq 0$ and $\tau, \tau^{-1} \in \hat\ell_d.$ The optimal Riesz constants in this case are $A=\|\tau^{-1}\|_{\hat\ell_d}^{-2}$ and $B=\|\tau\|_{\hat\ell_d}^2.$
\item The operator $S: \hat\ell_d \to \{S_\tau: \tau \in \hat\ell_d\}, \ \tau \mapsto S_\tau$ is an 
isometric isomorphism and an isomorphism of algebras.
\end{enumerate}
\end{proposition}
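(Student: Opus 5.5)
We will treat $\tau\mapsto(a\mapsto\tau*_da)$ as a representation of sequences by operators on $\ell_2(\Zdpos)$, and transport everything to $L_2([0,1]^d)_0$ through the unitary map $U:\ell_2(\Zdpos)\times\ell_2(\Zdpos)\to L_2([0,1]^d)_0$, $(a,b)\mapsto\sum_J a_J\sqrt2c_J+b_J\sqrt2s_J$. The trigonometric system $\mathcal Z^e$ is an orthonormal basis, so $U$ is unitary and $S_\tau=U(M_\tau\oplus M_\tau)U^{-1}$ with $M_\tau a:=\tau*_da$.

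\textbf{Part (i).} Linearity of $*_d$ in $\tau$ shows that $\hat\ell_d$ is a vector space and $\|\cdot\|_{\hat\ell_d}=\|M_\tau\|$ is a seminorm. Definiteness follows by testing with $a=\delta_{\mathbf 1}$: then $(\tau*_da)_{k\mathbf 1}=\tau_k$, so $\|\tau\|_{\ell_2}\le\|\tau\|_{\hat\ell_d}$. This also gives the embedding $\hat\ell_d\hookrightarrow\ell_2$.

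For $\ell_1\hookrightarrow\hat\ell_d$, write $M_\tau=\sum_k\tau_k D_k$ with $D_k a:=(a_{N/k}\mathbf 1_{k\mid N_0})_N$. Each $D_k$ is an isometry of $\ell_2(\Zdpos)$, since $J\mapsto kJ$ is injective. Hence $\|M_\tau\|\le\|\tau\|_{\ell_1}$.

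Pseudo-associativity (Lemma \ref{dDirichlet}(ii)) gives $M_{\tau*\lambda}=M_\tau M_\lambda$. This yields closure of $\hat\ell_d$ under $*$ and submultiplicativity. Commutativity is inherited from $*$, and Lemma \ref{dDirichlet}(iii) gives $M_e=\mathrm{Id}$, so $e$ is the unit with $\|e\|_{\hat\ell_d}=1$.

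\textbf{Part (ii).} If $\tau\in\hat\ell_d$, then $S_\tau=U(M_\tau\oplus M_\tau)U^{-1}$ is bounded, and since $U$ is unitary, $\|S_\tau\|=\|M_\tau\|=\|\tau\|_{\hat\ell_d}$. Conversely, if $S_\tau$ is well defined and bounded, restricting to the cosine part gives $\|\tau*_da\|_{\ell_2}\le\|S_\tau\|\|a\|_{\ell_2}$, so $\tau\in\hat\ell_d$.

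For bijectivity, if $\tau_1\ne0$ and $\tau^{-1}\in\hat\ell_d$, then $S_\tau S_{\tau^{-1}}=S_e=\mathrm{Id}$ and likewise in the other order.

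The converse is the main obstacle. We must show that $\tau_1=0$ prevents surjectivity, and that the inverse of $S_\tau$ is again of the form $S_\lambda$.
\begin{itemize}
\item If $\tau_1=0$, then $(\tau*_da)_{\mathbf 1}=0$ for all $a$ by Lemma \ref{kJ=N}, so $\sqrt2c_{\mathbf 1}\notin\operatorname{ran}S_\tau$.
\item If $\tau_1\ne0$ and $S_\tau$ is bijective, put $R:=S_\tau^{-1}$. Take $a=\tau^{-1}*_d\delta_{\mathbf 1}$, i.e.\ $a_{k\mathbf 1}=\tau^{-1}_k$ and $a_N=0$ otherwise; it is an a priori non-$\ell_2$ formal sequence. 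Then $\tau*_da=\delta_{\mathbf 1}$ formally.
\item Injectivity of formal $d$-convolution (Lemma \ref{dDirichlet}(iv)) forces the cosine coefficient sequence of $R(\sqrt2c_{\mathbf 1})$ to equal $a$. Hence $\tau^{-1}\in\ell_2$.
\item More generally, for every $b\in\ell_2$ the cosine coefficients of $R(Ub)$ are $\tau^{-1}*_db$, again by Lemma \ref{dDirichlet}(iv). So $M_{\tau^{-1}}$ maps $\ell_2$ into $\ell_2$ with norm at most $\|R\|$, i.e.\ $\tau^{-1}\in\hat\ell_d$, and then $S_\tau^{-1}=S_{\tau^{-1}}$.
\end{itemize}

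\textbf{Part (iii).} For $\tau\in\ell_2$, $C^\tau\in L_2([0,1])$ by Parseval, and $C^\tau_J=\sum_k\tau_k\sqrt2c_{kJ}$ is an $L_2$-convergent orthogonal series, since $k\mapsto kJ$ is injective. Thus $C^\tau_J=S_\tau(\sqrt2c_J)$ whenever $S_\tau$ is bounded, and similarly for $S^\tau_J$.

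If $\tau_1\ne0$ and $\tau,\tau^{-1}\in\hat\ell_d$, then $S_\tau$ is bijective by (ii). As in Lemma \ref{Mainbases}, $\mathcal Z^\tau_0$ is then a Riesz basis with optimal constants $\|S_\tau^{-1}\|^{-2}=\|\tau^{-1}\|_{\hat\ell_d}^{-2}$ and $\|S_\tau\|^2=\|\tau\|_{\hat\ell_d}^2$.

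Conversely, suppose $\mathcal Z^\tau_0$ is a Riesz basis. Then the synthesis map $(a,b)\mapsto\sum a_JC^\tau_J+b_JS^\tau_J$ is bounded and bijective from $\ell_2\times\ell_2$ onto $L_2([0,1]^d)_0$. Its Fourier coefficients, computed as in Proposition \ref{Coeff}(ii) (the same computation works for $\tau\in\ell_2$), are $(\tau*_da,\tau*_db)$. So this map equals $S_\tau U$, hence $S_\tau$ is bounded and bijective, and (ii) gives $\tau_1\ne0$ and $\tau,\tau^{-1}\in\hat\ell_d$.

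\textbf{Part (iv).} This follows from the identities $\|S_\tau\|=\|\tau\|_{\hat\ell_d}$ and $S_{\tau*\lambda}=S_\tau S_\lambda$ established above. Injectivity of $\tau\mapsto S_\tau$ follows because the norm is definite, and surjectivity holds by definition of the target set.
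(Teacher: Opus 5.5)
Your proposal is correct and follows essentially the same route as the paper: you conjugate $a\mapsto\tau*_da$ by the trigonometric orthonormal basis, use pseudo-associativity for the algebra structure, test with $\delta_{\mathbf 1}$ for $\hat\ell_d\hookrightarrow\ell_2$, and solve $S_\tau f=g$ coefficientwise via the Dirichlet inverse, while your direct observation $(\tau*_da)_{\mathbf 1}=\tau_1a_{\mathbf 1}$ and your explicit identification of the synthesis map with $S_\tau U$ in (iii) are slightly cleaner than the paper's corresponding steps. The only thing left implicit is that $R=S_\tau^{-1}$ is bounded, which you should justify, as the paper does, by the bounded inverse (open mapping) theorem applied to the bounded bijection $S_\tau$.
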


\begin{proof}
\begin{enumerate}[label=(\roman*)]
\item Let $\tau, \lambda \in \hat\ell_d$ and choose $c(\tau), c(\lambda)$ as in the definition of $
\hat\ell_d.$ For all $a \in \ell_2$ it holds 
$$\|(\tau+\lambda)*_da\|_{\ell_2}=\|\tau*_da+\lambda*_da\|_{\ell_2} \leq \|\tau*_da\|_{\ell_2}+\|\lambda*_da\|_{\ell_2} \leq (c(\tau)+c(\lambda))\|a\|_{\ell_2},$$ 
whence $\tau+\lambda \in \hat\ell_d$ and taking the infimum yields $\|\tau + \lambda\|_{\hat\ell_d}\leq \|\tau\|_{\hat\ell_d}+\|\lambda\|_{\hat\ell_d}.$ Scalar multiples and absolute homogeneity can be addressed similarly. From $\|\tau\|_{\hat\ell_d}=0$ it follows $0 \leq \|\tau*a\|_{\ell_2} \leq 0 \|a\|_{\ell_2}=0$ and $\|\tau\|_{\ell_2}=\|\tau*a\|_{\ell_2}=0$, which implies $\tau*_da=0$ for all $a \in \ell_2(\Zdpos)$ and shows $\tau=0$ by \ref{dDirichlet}(iv). All remaining properties of vector spaces, norms and the multiplication are immediately seen to hold true. 

Let $\tau, \mu \in \hat\ell_d$ and $a \in \ell_2(\Zdpos),$ then $\|(\tau*\mu)*_da\|_{\ell_2}=\|\tau*_d(\mu*_da)\|_{\ell_2} \leq \|\tau\|_{\hat\ell_d} \|\mu*_da\|_{\ell_2} \leq \|\tau\|_{\hat\ell_d} \|\mu\|_{\hat\ell_d} \|a\|_{\ell_2}$ shows $\tau*\mu \in \hat\ell_d$ and $\|\tau*\mu\|_{\hat\ell_d} \leq \|\tau\|_{\hat\ell_d} \|\mu\|_{\hat\ell_d}.$ The embedding $\ell_1 \hookrightarrow \hat\ell_d$ follows from Theorem \ref{Ttau}(i) and Proposition \ref{Coeff}. The embedding $\hat\ell_d \hookrightarrow \ell_2$ follows from $\|\tau\|_{\ell_2} = \|\tau*_da\|_{\ell_2} \leq \|\tau\|_{\hat\ell_d} \|a\|_{\ell_2}=\|\tau\|_{\hat\ell_d},$ where $a_{(1,...,1)}=1$ and $a_J=0$ for all $J \in \Zdpos \backslash \{(1,...,1)\}$.
\item The first part follows immediately from the definition of $\hat\ell_d, S_\tau$ and $\mathcal Z^e \backslash \{1\}$ being an orthonormal basis of $L_2([0,1]^d)_0.$ If $\tau_1 \neq 0$ and $\tau^{-1} \in \hat\ell_d,$ then clearly $S_\tau^{-1}=S_{\tau^{-1}}.$ Let $S_\tau$ be bijective, then one can proceed as in Lemma \ref{SurjTtau} to see $\tau_1 \neq 0.$ Let $g=\sum \limits_{J\in \Zdpos} a_J \sqrt 2 c_J + b_J \sqrt 2 s_J$ be any element of $L_2([0,1]^d)_0$.
Then the coefficients form a square-summable sequence. Because $S_\tau$ is surjective, there is another element $f=\sum \limits_{J \in \Zdpos} \alpha_J \sqrt 2 c_J + \beta_J \sqrt 2 s_J,$ where the coefficients again form a square-summable sequence, such that $S_\tau f=g.$ This shows $\tau*_d\alpha=a$ and $\tau*_d\beta=b,$ from which $\alpha=\tau^{-1}*_da$ and $\beta=\tau^{-1}*_db$ follow. In particular, because any square-summable sequence can be chosen as the coefficients of $g,$ this shows $\tau^{-1}*_da \in \ell_2(\Zdpos)$ for all $a \in \ell_2(\Zdpos)$ 
and that the inverse is given by $$S_\tau^{-1}: L_2([0,1]^d)_0 \to L_2([0,1]^d)_0, \quad \sum \limits_{J \in \Zdpos} a_J \sqrt 2 c_J + b_J \sqrt 2 s_J \mapsto \sum \limits_{J \in \Zdpos} (\tau^{-1} *_d a)_J \sqrt 2 c_J + (\tau^{-1} *_d b)_J \sqrt 2 s_J,$$ which means $S_\tau^{-1}=S_{\tau^{-1}}.$ By the open mapping theorem, $S_\tau^{-1}=S_{\tau^{-1}}$ is bounded, implying $\tau^{-1} \in \hat\ell_d.$ 

\item The first part follows from $
\mathcal Z^e_0:=\mathcal Z^e \backslash \{1\}$ being an orthonormal basis of the Hilbert space $L_2([0,1]^d)_0,$
since the coefficients are in $\ell_2.$ Further, $S_\tau$ maps the elements of $
\mathcal Z^e_0$ to those of $\mathcal Z^\tau_0.$ 
By \cite[Ch.3]{Basen} it follows that $\mathcal Z^\tau_0$
is a Riesz basis of $L_2([0,1]^d)_0$ if, and only if, $S_\tau$ is bijective and $S_\tau, S_\tau^{-1}=S_{\tau^{-1}}$ are bounded. 
The rest follows from (ii), 
in particular, that the optimal Riesz constants in this case are $B=\|S_\tau\|^2=\|\tau\|^2_{\hat\ell_d}$ and $A=\|S_\tau^{-1}\|^{-2}=\|\tau^{-1}\|^{-2}_{\hat\ell_d}.$
\item $S$ being an isomorphism of algebras follows from $*_d$ being pseudo-associative, see 
Lemma \ref{dDirichlet}(ii), and bilinear. Clearly, $S$ is surjective and an isometric isomorphism by (ii), from which injectivity and consequently bijectivity follow. 
\end{enumerate}
\end{proof}

To regain a Riesz basis of $L_2([0,1]^d)$ from a Riesz basis of $L_2([0,1]^d)_0$ one can add any non-zero constant map to the system. If one chooses $\|\tau\|_{\ell_2}$ every element has the same norm. 

The previously developed results were always independent of the dimension $d.$ When investigating the sequence spaces $\hat \ell_d$ it turns out that $\hat \ell_d=\hat \ell_1$ holds for all $d \in \N.$

\begin{lemma}[{$\boldsymbol{\hat \ell_d=\hat \ell_1}$}]\label{ellhatd}
   It holds $\hat \ell_d = \hat \ell_1$ and $\|\cdot\|_{\hat \ell_1} = \|\cdot\|_{\hat \ell_d}$ for all $d \in \N.$
\end{lemma}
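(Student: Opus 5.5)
The idea is to decompose $\Zdpos$ into disjoint ``rays'' on which the $d$-Dirichlet convolution acts as the ordinary Dirichlet convolution, and then compare $\ell_2$ norms ray by ray. Let $P:=\{M \in \Zdpos: M_0=1\}$ be the set of primitive vectors. Every $N\in\Zdpos$ can be written uniquely as $N=nM$ with $n=N_0\in\N$ and $M=N/N_0\in P$. Hence
$$\Zdpos=\bigsqcup_{M\in P}\{nM:n\in\N\}.$$
For $a:\Zdpos\to\R$ and $M\in P$ we set $a^{(M)}:\N\to\R$, $a^{(M)}_n:=a_{nM}$; this is exactly the sequence $a'$ of Lemma~\ref{dDirichlet} for any $N$ with $N/N_0=M$. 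Lemma~\ref{dDirichlet}(i) then gives
$$(\tau*_da)^{(M)}=\tau*a^{(M)}\quad\text{for all } M\in P.$$
Since the rays partition $\Zdpos$, we also have $\|a\|_{\ell_2}^2=\sum_{M\in P}\|a^{(M)}\|_{\ell_2}^2$.

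\emph{Inclusion $\hat\ell_1\subseteq\hat\ell_d$ with $\|\tau\|_{\hat\ell_d}\le\|\tau\|_{\hat\ell_1}$.} Take $\tau\in\hat\ell_1$ and $a\in\ell_2(\Zdpos)$. Each $a^{(M)}$ lies in $\ell_2$, and we compute
$$\|\tau*_da\|_{\ell_2}^2=\sum_{M\in P}\|\tau*a^{(M)}\|_{\ell_2}^2\le\|\tau\|_{\hat\ell_1}^2\sum_{M\in P}\|a^{(M)}\|_{\ell_2}^2=\|\tau\|_{\hat\ell_1}^2\|a\|_{\ell_2}^2.$$

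\emph{Inclusion $\hat\ell_d\subseteq\hat\ell_1$ with $\|\tau\|_{\hat\ell_1}\le\|\tau\|_{\hat\ell_d}$.} Take $\tau\in\hat\ell_d$ and $b\in\ell_2(\N)$. We embed $b$ into a single ray, say $M=(1,0,\dots,0)\in P$, by putting $a_{nM}:=b_n$ and $a_N:=0$ off this ray. Then $\|a\|_{\ell_2}=\|b\|_{\ell_2}$. Moreover $(\tau*_da)^{(M)}=\tau*b$, while on every other ray $M'$ we get $(\tau*_da)^{(M')}=\tau*0=0$. Therefore
$$\|\tau*b\|_{\ell_2}=\|\tau*_da\|_{\ell_2}\le\|\tau\|_{\hat\ell_d}\|b\|_{\ell_2}.$$
Taking infima in both directions yields equality of the sets and of the norms.

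The only step needing care is the ray decomposition. Uniqueness of $N=nM$ follows from Lemma~\ref{kJ=N}, and the remaining point is to confirm that $N/N_0$ really lies in $\Zdpos$: dividing by a positive integer preserves the sign of the first nonzero entry. Everything else is direct bookkeeping with Lemma~\ref{dDirichlet}(i).
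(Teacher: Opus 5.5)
Your proof is correct and follows essentially the same route as the paper's. Both arguments decompose $\Zdpos$ into rays $\N M$ over primitive vectors $M$, on which $*_d$ reduces to $*$; you cite Lemma~\ref{dDirichlet}(i) for this, whereas the paper re-derives it from Lemma~\ref{kJ=N}. Both then sum ray by ray for $\hat\ell_1\subseteq\hat\ell_d$ and embed $\ell_2$ into a single ray for $\hat\ell_d\subseteq\hat\ell_1$; the only cosmetic difference is that you use the ray through $(1,0,\dots,0)$ instead of the paper's $(1,\dots,1)$.
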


\begin{proof}
Let $P_d:=\{J\in\Zdpos:\gcd(J_1,\dots,J_d)=1\}$ denote the set of primitive vectors in $\Zdpos$. Every $K\in\Zdpos$ can be written uniquely as $K=nJ$ with $n\in\N$ and $J\in P_d$, namely $n=\gcd(K_1,\dots,K_d)$ and $J=\frac1nK$. Moreover, if $J\in P_d$, $k,n\in\N$ and $M\in\Zdpos$ satisfy $kM=nJ$, 
then $M=\frac nkJ$ by Lemma \ref{kJ=N}, since $(nJ)_0=n$ for $J\in P_d$. 
Let $\tau \in \hat \ell_d$ and $a \in \ell_2.$ Pick $J=(1,\dots,1)\in P_d$ and define $b:\Zdpos \to \R$ by $b_{kJ}:=a_k$ for all $k \in \N$ and $b_M:=0$ else. Then $b\in\ell_2(\Zdpos)$ with $\|b\|_{\ell_2(\Zdpos)}=\|a\|_{\ell_2}$, and by the above
$$(\tau*a)_n=\sum \limits_{k|n} \tau_k a_{\frac n k} = \sum \limits_{k|n} \tau_k b_{\frac n k J} = (\tau *_d b)_{nJ} \quad\text{for all } n\in\N.$$
Since $b$ vanishes outside $\N J$ and $kM\in\N J$ for all $M\in\N J$, also $\tau*_db$ vanishes outside $\N J$. Whence
$$\|\tau*a\|_{\ell_2}=\|\tau*_db\|_{\ell_2(\Zdpos)} \leq \|\tau\|_{\hat \ell_d} \|b\|_{\ell_2(\Zdpos)}=\|\tau\|_{\hat \ell_d} \|a\|_{\ell_2},$$
which shows $\hat \ell_d \subset \hat \ell_1$ and $\|\cdot\|_{\hat \ell_1} \leq \|\cdot\|_{\hat \ell_d}.$

Now let $\tau \in \hat \ell_1$ and $a \in \ell_2(\Zdpos).$ For $J\in P_d$ define $a^J\in\ell_2$ by $a^J_n:=a_{nJ}$, $n\in\N$. By the above, $(\tau*_da)_{nJ}=\sum_{k|n}\tau_ka_{\frac nkJ}=(\tau*a^J)_n$ for all $n\in\N$ and $J\in P_d$. Using the unique decomposition $K=nJ$ we obtain
$$\|\tau*_da\|_{\ell_2(\Zdpos)}^2 = \sum_{J \in P_d} \sum_{n=1}^\infty (\tau*_da)_{nJ}^2 = \sum_{J \in P_d} \|\tau*a^J\|_{\ell_2}^2 \leq \|\tau\|_{\hat \ell_1}^2 \sum_{J \in P_d} \|a^J\|_{\ell_2}^2 = \|\tau\|_{\hat \ell_1}^2 \sum_{J \in P_d}\sum_{n=1}^\infty a_{nJ}^2 = \|\tau\|_{\hat \ell_1}^2 \|a\|_{\ell_2(\Zdpos)}^2.$$
Taking square roots gives $\|\tau*_da\|_{\ell_2(\Zdpos)} \leq \|\tau\|_{\hat \ell_1} \|a\|_{\ell_2(\Zdpos)}$, which shows $\hat \ell_1 \subset \hat \ell_d$ and $\|\cdot\|_{\hat \ell_d} \leq \|\cdot\|_{\hat \ell_1}.$
\end{proof}

The characterisation of the space $\hat \ell_1$ is equivalent to a related question that has been thoroughly studied in \cite{Seip}. 



\begin{remark}[\textbf{Comparison with \cite{Seip}}]\label{CompHLS}
For $\tau\in\ell_1$ let $F_\tau(s):=\sum_{n=1}^\infty\tau_n n^{-s}$. By definition, $\hat\ell_1$ consists of the coefficient sequences of the multipliers of the Hilbert space $\mathcal H^2=\{\sum_n a_nn^{-s}:a\in\ell_2\}$ of Dirichlet series, and $\|\tau\|_{\hat\ell_1}$ is the multiplier norm. By \cite[Thm.~3.1]{Seip} the multipliers of $\mathcal H^2$ are exactly the Dirichlet series that converge to bounded analytic functions in $\operatorname{Re}s>0$, and
\[
\|\tau\|_{\hat\ell_1}=\sup_{\operatorname{Re}s>0}|F_\tau(s)|.
\]
Since $F_{\tau^{-1}}=1/F_\tau$, Proposition~\ref{ellhat}(iii) yields the optimal Riesz constants
\[
A=\inf_{\operatorname{Re}s>0}|F_\tau(s)|^2,\qquad B=\sup_{\operatorname{Re}s>0}|F_\tau(s)|^2
\]
of $\mathcal Z^\tau_0$ in $L_2([0,1]^d)_0$, independently of $d$, by Lemma \ref{ellhatd}.

For $d=1$ the restriction to $[0,\frac12]$ followed by the substitution $x=\frac y2$ maps $\overline{\operatorname{span}}\{s_k:k\in\N\}$ isometrically onto $L_2(0,1)$ and $S^\tau_k$ onto $\varphi(ky)$ with $\varphi(y)=\sqrt2\sum_{n}\tau_n\sin(\pi ny)$. Thus, for the sine part, Proposition~\ref{ellhat}(iii) reduces to the Riesz basis criterion of \cite{Seip}. The new aspects of our setting, besides a different method of proof, are the simultaneous treatment of the cosine and the sine part, the multivariate systems indexed by $\Zdpos$, the spaces $L_q([0,1]^d)$ with $q\neq2$, as well as the generalisation to mixed systems in 
the following Subsection \ref{subsect-mixed}.
\end{remark}

\subsection{Mixed systems}\rev{\label{subsect-mixed}}

Until now we only studied function systems for which the coefficients of the cosine series $C^\tau$ and the coefficients of the sine series $S^\tau$ were perfectly equal. 
In this subsection the cosine and the sine part of the system are instead generated by two different sequences $\tau$ and $\lambda$. We first prove the basis properties of mixed systems (Theorem \ref{MixedZystems}). Afterwards we introduce the reflection operator $R$ and the operators $T_\tau^R$ and show that $T_\tau+T_\lambda^R$ maps the trigonometric system $\mathcal Z^e$ to such a mixed system (Lemma \ref{MixOp}). This allows the generalisation of the algebraic behaviour that was seen in Theorem \ref{AlgTtau} and the invertibility criterion of Theorem \ref{InvTtau} to these operators (Lemma  \ref{MixAlg}).

\begin{theorem}[\textbf{Mixed systems}] \label{MixedZystems}
    Let $q\in(1,\infty), d \in \N, k_0 \neq 0$ and $\tau, \lambda \in \ell_1$ such that $\mathcal Z^\tau, \ \mathcal Z^\lambda$ are Schauder bases of $L_q([0,1]^d).$ Then, the system $$\{k_0\}\cup \{C_J^\tau, S_J^\lambda: J \in \Zdpos\}$$ is a Schauder basis of $L_q([0,1]^d)$ as well.  
    
    If $q=2,$ it is a Riesz basis, if $\mathcal Z^\tau$ and $\mathcal Z^\lambda$ are Riesz bases with constants $A_\tau \leq B_\tau$ and $A_\lambda \leq B_\lambda,$ where $$A=\min\{k_0^2,A_\tau, A_\lambda\}, \quad B=\max\{k_0^2,B_\tau, B_\lambda\}.$$
\end{theorem}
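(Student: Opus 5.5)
We construct a bounded bijection mapping the trigonometric system $\mathcal Z^e$ onto the mixed system, and then invoke Lemma~\ref{Mainbases}. Hypothesis and Lemma~\ref{SurjTtau} come first. Since $\mathcal Z^\tau$ and $\mathcal Z^\lambda$ are Schauder bases, the operators $T_\tau$ and $T_\lambda$ should be bijective. Strictly, only the basis property is assumed, so we first deduce bijectivity: the basis property gives $\tau_0\neq0$, so $T_\tau$ is injective by Theorem~\ref{InvTtau}(i). Surjectivity follows because every $f=\sum\alpha_n z_n$ with $z_n\in\mathcal Z^\tau$ has the preimage $\sum\alpha_n e_n$ in the trigonometric basis. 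Convergence of that preimage is the delicate point. If needed, I would simply read the hypothesis as "$T_\tau$ and $T_\lambda$ bijective", which is how it is used later.

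Next I decompose $L_q([0,1]^d)$. Let $X_c=\overline{\operatorname{span}}\{c_J\}$ and $X_s=\overline{\operatorname{span}}\{s_J\}$. I use the reflection $Rg(x)=g(-x)$, which is an isometry with $R^2=\mathrm{Id}$ that commutes with every $G_k$. The projections $P_\pm=\frac12(\mathrm{Id}\pm R)$ are then bounded and give $L_q=\R\oplus X_c\oplus X_s$; more precisely, the even functions are $\R\oplus X_c$ and the odd functions are $X_s$. Because $T_\tau$ commutes with $R$, it preserves even and odd parts. By Proposition~\ref{Coeff}(i) it also maps mean-zero functions to mean-zero functions, since the constant part is just multiplied by $\tau_0$. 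So $T_\tau$ maps $X_c$ into $X_c$, and bijectivity together with these invariances makes $T_\tau|_{X_c}:X_c\to X_c$ bijective; the same holds for $T_\lambda|_{X_s}$. Define
\[T:=k_0 P_0+T_\tau P_c+T_\lambda P_s,\]
where $P_0g=\int g$, $P_c$ and $P_s$ are the bounded projections onto $X_c$ and $X_s$, and $P_0+P_c+P_s=\mathrm{Id}$. Then $T$ is bounded, with inverse $k_0^{-1}P_0+(T_\tau|_{X_c})^{-1}P_c+(T_\lambda|_{X_s})^{-1}P_s$. It maps $1\mapsto k_0$, $\sqrt2c_J\mapsto C^\tau_J$ and $\sqrt2 s_J\mapsto S^\lambda_J$ by Theorem~\ref{Ttau}(iv). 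Lemma~\ref{Mainbases} then gives the Schauder basis property.

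For $q=2$ the three subspaces are mutually orthogonal, so $T$ is an orthogonal direct sum and $\|Tg\|^2=k_0^2\|P_0g\|^2+\|T_\tau P_cg\|^2+\|T_\lambda P_sg\|^2$. The Riesz bounds of $\mathcal Z^\tau$ give $A_\tau\|h\|^2\le\|T_\tau h\|^2\le B_\tau\|h\|^2$ for $h\in X_c$, taking coefficients in the orthonormal system $\sqrt2c_J$ with $b=0$. The analogous bound holds for $\lambda$ on $X_s$. Summing yields $A\|g\|^2\le\|Tg\|^2\le B\|g\|^2$ with $A=\min\{k_0^2,A_\tau,A_\lambda\}$ and $B=\max\{k_0^2,B_\tau,B_\lambda\}$, which are the stated Riesz constants.

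The main obstacle is the $L_q$ bookkeeping: checking that $P_c$ and $P_s$ are bounded on $L_q$ (via the reflection $R$, which avoids any Fourier-multiplier argument) and that $T_\tau$ restricts to a bijection of $X_c$. The invariances above settle the latter directly.
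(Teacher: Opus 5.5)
\textbf{There is a genuine gap in the Schauder part.} Your operator argument needs $T_\tau|_{X_c}$ and $T_\lambda|_{X_s}$ to be onto. This does not follow from the hypothesis that $\mathcal Z^\tau$ and $\mathcal Z^\lambda$ are Schauder bases, and the step you call delicate does fail.

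A bounded injective operator can map a basis onto a basis without being surjective, for example $e_n\mapsto e_n/n$ on $\ell_2$. This also happens for the operators $T_\tau$ themselves. Take $d=1$, $q=2$, and $\tau_{2^i}=\binom{\alpha}{i}$ for some $0<\alpha<\frac12$, with $\tau_k=0$ otherwise.
\begin{itemize}
\item By Proposition~\ref{Coeff}(i), $T_\tau$ acts on the cosine and on the sine coefficients as $a\mapsto\tau*a$. This splits along the chains $\{m2^i\}_{i\geq0}$ with $m$ odd.
\item Identifying $e_{m2^i}$ with $z^i$, on each chain it is multiplication by the outer function $(1+z)^\alpha$ on $H^2$.
\item Since $|1+e^{i\theta}|^{2\alpha}$ is an $A_2$ weight, $\{(1+z)^\alpha z^j\}_{j\geq0}$ is a (conditional) basis of $H^2$, with the same basis constant on every chain.
\item Hence $\mathcal Z^\tau$ is a Schauder basis of $L_2([0,1])$, with $\tau_0=2^\alpha\neq0$.
\item Yet $T_\tau$ is not onto, because $(1+z)^\alpha$ vanishes at $z=-1$.
\end{itemize}
So reading the hypothesis as ``$T_\tau$, $T_\lambda$ bijective'' proves a strictly weaker theorem. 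That would suffice for Theorem~\ref{Rdbasis}, where Theorem~\ref{Basiscrit}(ii) supplies bijectivity, but it does not prove the statement.

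Only the Riesz part is unaffected. There the lower Riesz bound makes $T_\tau|_{X_c}$ bounded below. Its range is dense in $X_c$, as you see by applying $P_c$ to the complete system $\mathcal Z^\tau$. So your $T$ is bijective; up to the constant term it is the operator of Lemma~\ref{MixOp}. Your orthogonal norm computation then gives exactly $A$ and $B$.

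\textbf{The repair} is to apply your projections to expansions rather than to operators, which is what the paper does.
\begin{itemize}
\item Write $g=P_0g+P_cg+P_sg$, the paper's $g_0+g_++g_-$.
\item Expand $P_cg$ in $\mathcal Z^\tau$ and apply the bounded projection $P_c$ to that convergent series. Since $P_c\tau_0=0$, $P_cC^\tau_J=C^\tau_J$ and $P_cS^\tau_J=0$, this gives $P_cg=\sum_Ja_JC^\tau_J$.
\item Likewise $P_sg=\sum_Ja'_JS^\lambda_J$ via $\mathcal Z^\lambda$, and the constant $P_0g$ is a multiple of $k_0$.
\item For uniqueness, apply $P_0$, $P_c$, $P_s$ to an arbitrary expansion of $g$ in the mixed system, and invoke uniqueness of expansions in $\mathcal Z^\tau$ and $\mathcal Z^\lambda$. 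The paper instead uses the coefficient identity $\hat a=\tau*_da$ from Proposition~\ref{Coeff}(ii).
\end{itemize}
This uses only the basis property of $\mathcal Z^\tau$ and $\mathcal Z^\lambda$, and never the surjectivity of $T_\tau$ or $T_\lambda$.
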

\begin{proof}
We will prove this theorem in four steps: First we will show that any $g \in L_q([0,1]^d)$ can uniquely be represented as $g=g_0+g_++g_-,$ where $g_0$ is a constant, $g_+$ is even with vanishing mean and $g_-$ is odd (with vanishing mean). Then, we will use this to find representations with respect to the given system and afterwards show uniqueness. Lastly, we will estimate the Riesz constants. Let $g \in L_q([0,1]^d).$

\textbf{Step 1:} We set $g_0:=\int_{[0,1]^d} g(x) \ \mathrm d x.$ Then, $g= g_0 + g_+ + g_-,$ where $$g_+(x):=\frac{g(x)+g(-x)}2-g_0, \quad g_-(x) := \frac{g(x)-g(-x)}2,$$ is a suitable representation. We will prove that it is the only one.

Assume there would be another representation $g=g_0'+g_+'+g_-'.$ Integrating $g$ over $[0,1]^d$ shows $g_0'=g_0.$ Now from $g_0+g_++g_-=g=g_0'+g_+'+g_-'$ we get $g_+-g_+'=g_-'-g_-.$ The left hand side is an even function, while the right hand side is an odd function. The only such function is the zero function, since any odd and even function $h$ satisfies $-h(x)=h(-x)=h(x)$ and consequently $h(x)=0$ for almost every $x \in [0,1]^d.$ Whence, $g_+-g_+'=0=g_-'-g_-$ and $g_+=g_+', g_-=g_-'$ follow.

\textbf{Step 2:}
Since $\mathcal Z^\tau$ is a Schauder basis of $L_q([0,1]^d)$, we can find a representation of $g_+$ with respect to {$\mathcal Z^\tau,$} that is, $a_J, b_J, \gamma_0 \in \R$ such that $$g_+=\gamma_0+\sum \limits_{J \in \Zdpos} a_J C_J^\tau + \sum \limits_{J \in \Zdpos} b_J S_J^\tau.$$ Integrating over $[0,1]^d$ shows $\gamma_0=0.$ Furthermore, since $g_+$ is an even function, $$g_+(x)=\frac{g_+(x)+g_+(-x)}{2}=\sum \limits_{J \in \Zdpos} a_J \frac{C_J^\tau(x)+C_J^\tau(-x)}2 + \sum \limits_{J \in \Zdpos} b_J \frac{S_J^\tau(x)+S_J^\tau(-x)}2 = \sum \limits_{J \in \Zdpos} a_J C_J^\tau$$ follows. Similarly, $g_-$ can be expressed with respect to $\mathcal Z^\lambda$ as $g_-(x)=\sum \limits_{J \in \Zdpos} a'_J S_J^\lambda$ giving $$g=g_0+g_++g_-=\frac{\int_{[0,1]^d} g(x) \ \mathrm d x} {k_0} \cdot k_0 + \sum \limits_{J \in \Zdpos} a_J C_J^\tau + \sum \limits_{J \in \Zdpos} a'_J S_J^\lambda.$$ 
 
\textbf{Step 3:}
Let $$g=b_0 \cdot k_0+ \sum \limits_{J \in \Zdpos} b_J C_J^\tau + \sum \limits_{J \in \Zdpos} b'_J S_J^\lambda$$ be any representation of $g$ with respect to the system. The coefficients $b_J, b_J'$ are uniquely determined by $g$ because of Proposition \ref{Coeff}(ii). Similarly $b_0$ is uniquely determined by $b_0 \cdot k_0 = \int_{[0,1]^d} g(x) \mathrm d x,$ since $k_0\neq 0.$

\textbf{Step 4:} Due to orthogonality we can estimate \begin{align*}
    \left\|a_0 \cdot k_0 + \sum \limits_{J \in \Zdpos} a_J C_J^\tau + \sum \limits_{J \in \Zdpos} a'_J S_J^\lambda\right\|^2 &= a_0^2k_0^2 + \left\|\sum \limits_{J \in \Zdpos} a_J C_J^\tau\right\|^2 + \left\| \sum \limits_{J \in \Zdpos} a'_J S_J^\lambda\right\|^2 \\ &\leq k_0^2 \cdot a_0^2 + B_\tau \cdot \sum \limits_{J \in \Zdpos}a_J^2 + B_\lambda \cdot \sum \limits_{J \in \Zdpos}(a_J')^2 \\ &\leq \max\{k_0^2,B_\tau, B_\lambda\} \left(a_0^2+\sum \limits_{J \in \Zdpos}(a_J^2+(a_J')^2)\right) \end{align*} and similarly $$\left\|a_0 \cdot k_0 + \sum \limits_{J \in \Zdpos} a_J C_J^\tau + \sum \limits_{J \in \Zdpos} a'_J S_J^\lambda\right\|^2 \geq \\ \min\{k_0^2,A_\tau, A_\lambda\} \left(a_0^2+\sum \limits_{J \in \Zdpos}(a_J^2+(a_J')^2)\right).$$
\end{proof}

These mixed systems can again be obtained from $\mathcal Z^e$ using a bounded operator $T.$ We start with a definition.

\begin{definition}[\textbf{The operators $R$ and $T_\tau^R$}]
    Let $q \in (1,\infty)$ and $d \in \N.$ We define the reflection map $$R:L_q([0,1]^d) \to L_q([0,1]^d), \ Rg(x) := g(-x).$$ Further, given $\tau \in \ell_1$ we set $$T_\tau^R: L_q([0,1]^d) \to L_q([0,1]^d), \quad T_\tau^Rg(x):= (T_\tau \circ R) g(x)= \sum \limits_{k=1}^\infty \tau_k g(-kx).$$
\end{definition}

The following facts are easily seen: Firstly, $R$ is its own inverse and an isometry. Secondly, for any $J \in \Zdpos$ it holds $Rc_J=c_J$ and $Rs_J=-s_J$ as well as $R1=1.$ Lastly, for any $\tau \in \ell_1$ the operators $R$ and $T_\tau$ commute. 

\begin{lemma}[\textbf{Mixed system operator}] \label{MixOp}
    Let $q \in (1,\infty), d \in \N, J \in \Zdpos$ and $\tau, \lambda \in \ell_1.$ The operator $T=T_{\frac{\tau+\lambda} 2} + T_{\frac{\tau-\lambda} 2}^R \in \mathcal L(L_q([0,1]^d))$ satisfies $$T 1 = \tau_0, \quad T (\sqrt 2 c_J)=C_J^\tau \quad \text{and} \quad T(\sqrt 2 s_J)=S^\lambda_J.$$
\end{lemma}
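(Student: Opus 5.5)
The proof is a direct computation on the three generators, using linearity together with the facts listed just before the lemma: $R1=1$, $Rc_J=c_J$, $Rs_J=-s_J$, and $T^R_\kappa=T_\kappa\circ R$. First, boundedness of $T$ is immediate. Both $\frac{\tau+\lambda}2$ and $\frac{\tau-\lambda}2$ lie in $\ell_1$, so Theorem \ref{Ttau}(i) applies to both summands, and $R$ is an isometry. Second, $T_\kappa$ is linear in $\kappa$ (Theorem \ref{AlgTtau}), which gives $T_{\frac{\tau+\lambda}2}=\frac12(T_\tau+T_\lambda)$ and $T^R_{\frac{\tau-\lambda}2}=\frac12(T_\tau-T_\lambda)\circ R$.

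Now we evaluate $T$ on each generator.

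For the constant function, $R1=1$ gives
\[
T1=\tfrac12(T_\tau+T_\lambda)1+\tfrac12(T_\tau-T_\lambda)1=T_\tau 1=\tau_0,
\]
using $T_\tau(1)=\sum_k\tau_k=\tau_0$ from the proof of Lemma \ref{ZystemSRBasis}(ii).

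For the cosine, $R(\sqrt2 c_J)=\sqrt2 c_J$, so the same cancellation leaves $T(\sqrt2c_J)=T_\tau(\sqrt2 c_J)$. Here I would invoke Theorem \ref{Ttau}(iv) with $h=\sqrt2 c$ and $f=T_\tau(\sqrt2c)=C^\tau$; this gives $T_\tau(\sqrt2 c_J)=C^\tau_J$.

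For the sine, $R(\sqrt2 s_J)=-\sqrt2 s_J$, so
\[
T(\sqrt2s_J)=\tfrac12(T_\tau+T_\lambda)\sqrt2s_J-\tfrac12(T_\tau-T_\lambda)\sqrt2 s_J=T_\lambda(\sqrt2 s_J),
\]
and Theorem \ref{Ttau}(iv) again yields $S^\lambda_J$.

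The only point needing care is the reflection identities on the torus, since functions are extended $1$-periodically. Evenness of $c_J$ and oddness of $s_J$ hold pointwise on $\R^d$, because $J^T(-x)=-J^Tx$. Theorem \ref{Ttau}(iv) is stated for $\alpha\in\Zdpos$, which is exactly our $J$, so it applies as written. Nothing here is a genuine obstacle; the proof is bookkeeping. Note also that $T1=\tau_0$ rather than $\lambda_0$, because the $T^R$ part adds $\frac12(\tau_0-\lambda_0)$ to the $\frac12(\tau_0+\lambda_0)$ coming from the first summand.
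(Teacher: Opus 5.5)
Your proof is correct and is essentially the paper's direct computation. The paper expands both series termwise and adds the coefficients $\frac{\tau_k\pm\lambda_k}{2}$ in front of $1$, $c_{kJ}$ and $s_{kJ}$. You instead regroup $T=\frac12(T_\tau+T_\lambda)+\frac12(T_\tau-T_\lambda)\circ R$ and use $R1=1$, $Rc_J=c_J$, $Rs_J=-s_J$ together with Theorem~\ref{Ttau}(iv). This is the same cancellation, organised at the operator level rather than coefficient by coefficient.
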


\begin{proof}
    Because $\ell_1$ is a vector space it holds $\frac{\tau \pm \lambda}{2} \in \ell_1.$ By Theorem \ref{Ttau}(i), $T$ is linear and bounded. It holds \begin{align*}
    T(1) &= \sum \limits_{k=1}^\infty \frac{\tau_k + \lambda_k}2 + \sum \limits_{k=1}^\infty \frac{\tau_k-\lambda_k}{2} = \sum \limits_{k=1}^\infty \tau_k = \tau_0, \\ T(\sqrt 2 c_J) &= \sqrt 2 \sum \limits_{k=1}^\infty \frac{\tau_k + \lambda_k}2c_{kJ} + \sqrt 2 \sum \limits_{k=1}^\infty \frac{\tau_k - \lambda_k}2c_{kJ}=\sqrt 2 \sum \limits_{k=1}^\infty \tau_kc_{kJ}=
    C^\tau_J, \\
    T(\sqrt 2 s_J) &= \sqrt 2\sum \limits_{k=1}^\infty \frac{\tau_k + \lambda_k}2s_{kJ} - \sqrt 2\sum \limits_{k=1}^\infty \frac{\tau_k - \lambda_k}2s_{kJ} = \sqrt 2 \sum \limits_{k=1}^\infty \lambda_k s_{kJ}=S^\lambda_J.
\end{align*}
\end{proof}

Lemma \ref{MixOp} realises the mixed systems as images of $\mathcal Z^e$ under operators of the form $T_\tau+T_\lambda^R$. In case $q=2$ one can show, that these operators are again homeomorphisms, if the corresponding mixed system forms a Riesz basis of $L_2([0,1]^d),$ see \cite[Ch. 3.6]{Basen} As in the case of the operators $T_\tau$, these operators form an algebra, and their inverses can be computed explicitly from Dirichlet inverses of sequences. This is the content of the following lemma.

\begin{lemma}[Algebraic behaviour of $T_\tau+T_\lambda^R$] \label{MixAlg}
    Let $q \in (1,\infty), d \in \N, 
    \alpha \in \R$ and $\tau, \lambda, \kappa,\nu \in \ell_1.$ In $\mathcal L(L_q([0,1]^d))$ it holds
    \begin{enumerate}[label=(\roman*)]
        \item$
        \alpha(T_\tau+T_\lambda^R)+(T_\kappa+T_\nu^R)=T_{
        \alpha \tau+\kappa} + T_{
        \alpha \lambda+\nu}^R$
        \item $(T_\kappa+T_\nu^R) \circ(T_\tau + T_\lambda^R)=(T_\tau + T_\lambda^R)\circ(T_\kappa+T_\nu^R)=T_{\tau*\kappa+\lambda*\nu}+T^R_{\tau*\nu+\lambda*\kappa}$
        \item $(T_\tau+T_\lambda^R)^{-1}=T_{(\tau^{*2}-\lambda^{*2})^{-1}*\tau}+T_{-(\tau^{*2}-\lambda^{*2})^{-1}*\lambda}^R$ as long as $|\tau_1|\neq |\lambda_1|$ and $(\tau^{*2}-\lambda^{*2})^{-1}\in \ell_1.$
    \end{enumerate}    
\end{lemma}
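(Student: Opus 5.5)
The plan is to reduce all three parts to the algebra homomorphism of Theorem~\ref{AlgTtau}, together with two facts about the reflection $R$: $R^2=\mathrm{Id}$, and $R$ commutes with every $T_\tau$. The second fact holds because $RG_kg(x)=g(-kx)=G_kRg(x)$, and $T_\tau=\sum_k\tau_kG_k$ converges absolutely in operator norm.

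Part (i) is immediate from the linearity of $\tau\mapsto T_\tau$. We have $\alpha T_\tau+T_\kappa=T_{\alpha\tau+\kappa}$. Likewise $\alpha T_\lambda^R+T_\nu^R=(\alpha T_\lambda+T_\nu)R=T_{\alpha\lambda+\nu}R=T^R_{\alpha\lambda+\nu}$.

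For part (ii), I would expand the product into four terms:
\[
(T_\kappa+T_\nu R)(T_\tau+T_\lambda R)=T_\kappa T_\tau+T_\kappa T_\lambda R+T_\nu R T_\tau+T_\nu R T_\lambda R .
\]
Using the commutation $RT_\tau=T_\tau R$ and $R^2=\mathrm{Id}$, the last term becomes $T_\nu T_\lambda$. Theorem~\ref{AlgTtau} then gives $T_{\kappa*\tau}+T_{\nu*\lambda}+(T_{\kappa*\lambda}+T_{\nu*\tau})R$. Commutativity of $*$ turns this into the stated form $T_{\tau*\kappa+\lambda*\nu}+T^R_{\tau*\nu+\lambda*\kappa}$. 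This expression is symmetric under swapping $(\tau,\lambda)\leftrightarrow(\kappa,\nu)$, which yields the commutativity claim.

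For part (iii), set $\rho:=\tau^{*2}-\lambda^{*2}$. Its first entry is $\rho_1=\tau_1^2-\lambda_1^2\neq0$ because $|\tau_1|\neq|\lambda_1|$, so the Dirichlet inverse $\rho^{-1}$ exists, and by hypothesis $\rho^{-1}\in\ell_1$. Then $\kappa:=\rho^{-1}*\tau$ and $\nu:=-\rho^{-1}*\lambda$ lie in $\ell_1$ by \eqref{Submult}. Plugging these into (ii), the non-reflected part is
\[
\tau*\kappa+\lambda*\nu=\rho^{-1}*(\tau^{*2}-\lambda^{*2})=e,
\]
and the reflected part is
\[
\tau*\nu+\lambda*\kappa=-\rho^{-1}*\tau*\lambda+\rho^{-1}*\lambda*\tau=0 .
\]
Hence the composition is $T_e+T_0^R=\mathrm{Id}$ in both orders, since (ii) already shows the two factors commute. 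This proves bijectivity and gives the stated formula for the inverse.

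The only step needing care is the commutation $RT_\tau=T_\tau R$, which the paper states but does not prove. I expect it to be the main obstacle only in the sense that the exchange of $R$ with the infinite sum must be justified. That follows because $R$ is bounded, being an isometry, and the series defining $T_\tau$ converges in operator norm. Everything else is bookkeeping with the Dirichlet convolution.
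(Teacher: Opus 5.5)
Your proposal is correct and follows the paper's route: parts (i) and (ii) come from Theorem~\ref{AlgTtau} together with $R^2=\mathrm{Id}$ and $RT_\tau=T_\tau R$, and part (iii) comes from inserting $\rho^{-1}*\tau$ and $-\rho^{-1}*\lambda$ into (ii) to obtain $T_e+T_0^R=\mathrm{Id}$. You also make explicit several points the paper leaves implicit: the justification of the commutation with $R$, that these sequences lie in $\ell_1$ by \eqref{Submult}, and that the inverse is two-sided because the factors commute by (ii).
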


\begin{proof}
    Using Theorem \ref{AlgTtau} parts (i) and (ii) are easily seen, we only prove (iii).

    The sequence $\tau^{*2}-\lambda^{*2}$ admits a Dirichlet inverse, since $(\tau^{*2}-\lambda^{*2})_1=(\tau*\tau)_1-(\lambda*\lambda)_1=
    \tau^2_1-\lambda^2_1 \neq 0$ holds because of the assumption $|\tau_1| \neq |\lambda_1|$. Because the inverse was further assumed to be summable, the operator $T_{(\tau^{*2}-\lambda^{*2})^{-1}*\tau}+T_{-(\tau^{*2}-\lambda^{*2})^{-1}*\lambda}^R$ is well-defined. Lastly, using (ii) we check \begin{align*}
        (T_\tau+T_\lambda^R)(T_{(\tau^{*2}-\lambda^{*2})^{-1}*\tau}+T_{-(\tau^{*2}-\lambda^{*2})^{-1}*\lambda}^R)=T_{(\tau^{*2}-\lambda^{*2})*(\tau^{*2}-\lambda^{*2})^{-1}}+T^R_{(\lambda*\tau-\tau*\lambda)*(\tau^{*2}-\lambda^{*2})^{-1}}=T_e+T_0^R=\text{Id}.
    \end{align*}
\end{proof}

\subsection{Examples} \label{subsect-ex}
We now give two examples of sequences $\tau\in\ell_1$ to which Theorem~\ref{Basiscrit} applies and for which the series defining $C^\tau$ and $S^\tau$ can be summed explicitly. This phenomenon is further developed in the subsequent Remark \ref{Power series}.

\begin{example}[\textbf{Applications of Theorem~\ref{Basiscrit}}] \label{Example}
\begin{enumerate}[label=(\roman*)]
    \item \emph{The geometric sequence.} Let $r\in(-1,1)$ and $\tau_k:=r^k.$ Then $\sum_{n=1}^\infty |r|^n=\frac{|r|}{1-|r|}$ converges, whence $\tau\in\ell_1$. We check whether $2|\tau_1|>\|\tau\|_{\ell_1}$ holds. For $r=0$ this is not the case, so let $r\neq0.$ Then $$2|r|>\frac{|r|}{1-|r|}\ \text{ is equivalent to }\ 1-|r|>\frac12, \ \text{ that is, to }\ |r|<\frac12.$$ Whence Theorem~\ref{Basiscrit}(i) applies for every $r\in(-\frac12,\frac12)\setminus\{0\}$, and for $q=2$ it yields the Riesz constants $B=\big(\frac{|r|}{1-|r|}\big)^2$ and $A\geq\big(|r|\frac{1-2|r|}{1-|r|}\big)^2.$

    The functions $C^\tau, S^\tau$ can be written in closed form. For every $r\in(-1,1)$ and $x\in\R$ we have $|re^{2\pi ix}|=|r|<1$, and therefore
    \begin{align*}
        &\sum_{k=1}^\infty r^k\cos(2\pi kx)+i\sum_{k=1}^\infty r^k\sin(2\pi kx)
        =\sum_{k=1}^\infty \big(re^{2\pi ix}\big)^k=\frac{re^{2\pi ix}}{1-re^{2\pi ix}}
        =\frac{re^{2\pi ix}\big(1-re^{-2\pi ix}\big)}{\big(1-re^{2\pi ix}\big)\big(1-re^{-2\pi ix}\big)}\\
        &\qquad=\frac{re^{2\pi ix}-r^2}{1+r^2-2r\cos(2\pi x)}
        =\frac{\big(r\cos(2\pi x)-r^2\big)+ir\sin(2\pi x)}{1+r^2-2r\cos(2\pi x)}.
    \end{align*}
    Taking real and imaginary parts, we obtain $$C^\tau(x)=\sqrt2\,\frac{r\cos(2\pi x)-r^2}{1+r^2-2r\cos(2\pi x)}\quad\text{and}\quad S^\tau(x)=\sqrt2\,\frac{r\sin(2\pi x)}{1+r^2-2r\cos(2\pi x)}.$$
    \begin{figure}[h]
    \begin{subfigure}[c]{0.5\textwidth}
    \centering
    \includegraphics[width=0.7\textwidth]{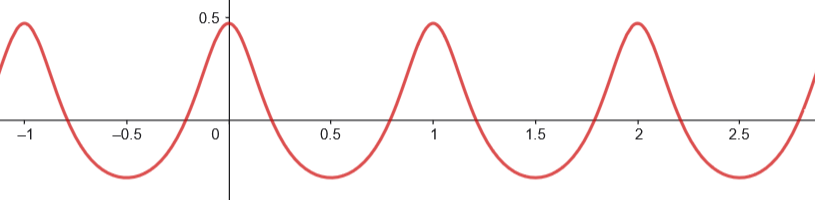}
    \subcaption{$C^\tau$}
    \end{subfigure}
    \begin{subfigure}[c]{0.5\textwidth}
    \centering
    \includegraphics[width=0.7\textwidth]{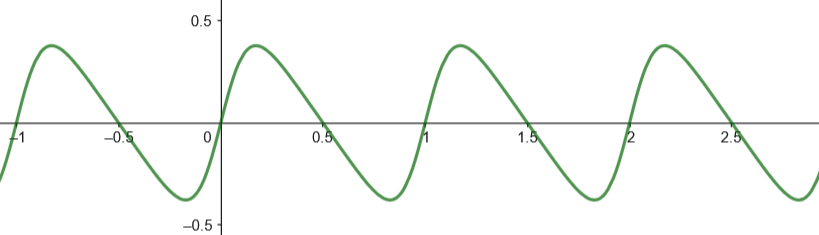}
    \subcaption{$S^\tau$}
    \end{subfigure}
    \caption{Plots of $C^\tau$ and $S^\tau$ in Example~\ref{Example}(i) with $r=\frac14$}
    \end{figure}
    \item \emph{The exponential series.} Let $\tau_k:=\frac1{k!}.$ Then $\tau\in\ell_1$ and $\|\tau\|_{\ell_1}=e-1<2=2|\tau_1|$, so Theorem~\ref{Basiscrit}(i) applies, and for $q=2$ the Riesz constants are $B=(e-1)^2$ and $A\geq(3-e)^2.$ In $\C$ we compute
    \begin{align*}
        &\sum_{k=1}^\infty\frac{\cos(2\pi kx)}{k!}+i\sum_{k=1}^\infty\frac{\sin(2\pi kx)}{k!}
        =\sum_{k=1}^\infty\frac{\big(e^{2\pi ix}\big)^k}{k!}=\exp\big(\cos(2\pi x)+i\sin(2\pi x)\big)-1\\
        &\qquad=e^{\cos(2\pi x)}\big(\cos(\sin(2\pi x))+i\sin(\sin(2\pi x))\big)-1,
    \end{align*}
    which shows $$C^\tau(x)=\sqrt2\,e^{\cos(2\pi x)}\cos(\sin(2\pi x))-\sqrt2\quad\text{and}\quad S^\tau(x)=\sqrt2\,e^{\cos(2\pi x)}\sin(\sin(2\pi x)).$$
    \begin{figure}[h]
    \begin{subfigure}[c]{0.5\textwidth}
    \centering
    \includegraphics[width=0.6\textwidth]{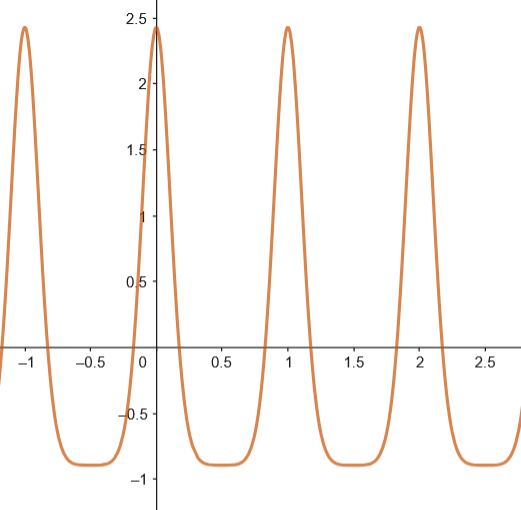}
    \subcaption{$C^\tau$}
    \end{subfigure}
    \begin{subfigure}[c]{0.5\textwidth}
    \centering
    \includegraphics[width=0.6\textwidth]{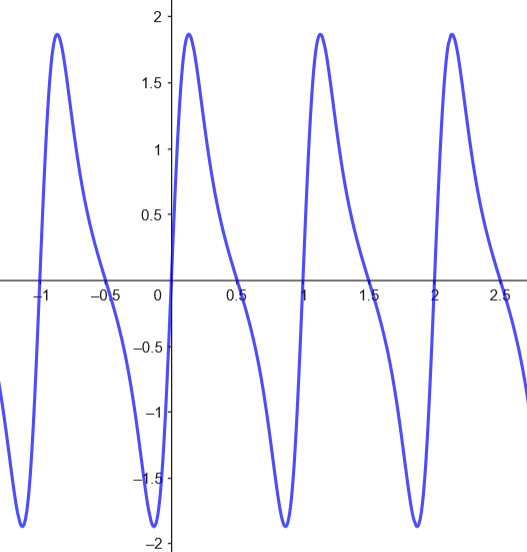}
    \subcaption{$S^\tau$}
    \end{subfigure}
    \caption{Plots of $C^\tau$ and $S^\tau$ in Example~\ref{Example}(ii)}
    \end{figure}
\end{enumerate}
\end{example}

\begin{remark}{\textbf{(Holomorphic interpretation).}} \label{Power series} \hypertarget{Power series} Let $\tau \in \ell_1.$ In order to obtain closed expressions for $C^\tau, S^\tau$ we make use of Euler's formula and compute $C^\tau(x) + i S^\tau(x)$ using Taylor series. More generally one may study $\mathcal Z^\tau$ in a complex framework: Consider the power series $$f(z):=\sum \limits_{n=1}^\infty \tau_n z^n,$$ which converges for all $z \in \C$ where $|z| \leq 1,$ thus showing that the radius of convergence $\rho$ is at least 1. This allows the evaluation $f(\exp(ix))$ for all $x \in \R$ since  $|\exp(ix)|=1,$ and leads to the formulae $$C^\tau(x) = \sqrt{2}\text{Re}(f(\exp(ix))=\sqrt{2}\frac{f(\exp(ix))+\overline{f(\exp(ix)}} 2 =\sqrt{2} \frac{f(\exp(ix))+f(\exp(-ix))}2,$$ as well as $$S^\tau(x) = \sqrt{2}\text{Im}(f(\exp(ix))=\sqrt{2}\frac{f(\exp(ix))-\overline{f(\exp(ix)}} {2i} = \sqrt{2}\frac{f(\exp(ix))-f(\exp(-ix))}{2i}.$$

Whenever we can find closed expressions for $f,$ we are also able to find analytical expressions for $C^\tau$ and $S^\tau.$ 
If the radius of convergence satisfies $\rho>1,$ then $f$ is holomorphic, whence smooth, on $B_\rho(0).$ Since for all $x \in \R$ we have $|\exp(ix)|=1<\rho,$ this shows that $C^\tau$ and $S^\tau$ are even smooth in this case. It is possible that some other insides can be gathered by the means of complex analysis, but we will not look into that.

    Such a power series $f$ can be interpreted as the complex analogue to the operator $T_\tau.$ If additionally $\lambda \in \ell_1$ we can also interpret the operator $T$ from Lemma \ref{MixOp} in terms of holomorphic functions. The reflection operator $R$ behaves similar to the complex conjugation, which 
    however is not an analytic function. Therefore, we need to find a better representation. 
    Since we are only interested in evaluating $\exp(ix)$ the conjugate of which is $\exp(-ix)=\frac{1}{\exp(ix)},$ we will identify $R$ with taking the multiplicative inverse, which is holomorphic on $\C \backslash \{0\}.$ 
    The analytic analogue to the operator $$T=T_\frac{\tau+\lambda}{2}+T_\frac{\tau-\lambda}{2}^R, \quad (Tg)(x) := \sum \limits_{k=1}^\infty \frac{\tau_k+\lambda_k}{2} g(kx) + \sum \limits_{k=1}^\infty \frac{\tau_k-\lambda_k}{2}g(-kx)$$ then takes the form $$f(z):=\sum \limits_{k=1}^\infty \frac{\tau_k+\lambda_k}{2}z^k + \sum \limits_{k=1}^\infty \frac{\tau_k-\lambda_k}{2}z^{-k}.$$ Since $\frac{\tau\pm\lambda}{2} \in \ell_1$ the 
    analytic part converges at least on $\{z \in \C: |z| \leq 1\}$ 
    whereas the principal part converges at least on $\{z \in \C: |z|\geq 1\}.$ Thus, we may evaluate $f(\exp(ix))$ for all $x \in \R$ since $|\exp(ix)|=1$ holds. 
      In order to use the theory from complex analysis we   need to ensure better convergence once more. Let $\rho_g$ be the radius of convergence of the power series $g(z)=\sum \limits_{k=1}^\infty \frac{\tau_k+\lambda_k}{2}z^k$ and $\rho_h$ the radius of convergence of the power series $\sum \limits_{k=1}^\infty \frac{\tau_k-\lambda_k}{2}z^k,$ then from the summability of the coefficients we get $\rho_g, \ \rho_h \geq 1.$ 
      In case both are strictly greater than $1$, the Laurent series
$f(z)=g(z)+h(1/z)$ with $h(w):=\sum_{k\ge1}\frac{\tau_k-\lambda_k}{2}w^k$
converges on the open annulus $\{z\in\C:\rho_h^{-1}<|z|<\rho_g\}$ and defines a holomorphic function there. 
This annulus contains the unit circle and whence $\{\exp(ix):x\in\R\}$.
For details about the convergence and holomorphity of power series and the expansion into Taylor series see for instance \cite[I §7 and §8, III §4 and §5]{Funktionentheorie} and for details about convergence and holomorphity of Laurent series, see for instance \cite[VI§1]{Funktionentheorie}.
 Lastly, 
 let us point out again that 
 in \cite{Seip} a very similar framework was studied where a connection to Dirichlet series was established, which deepens the connection to holomorphic functions, see Remark \ref{CompHLS}. It is highly likely that a similar procedure can be implemented in our setting.
\end{remark}

\section{Application to the ReLU-inspired system $\mathcal R_d$}\label{sect-app}

In this section we apply the general theory to the system $\mathcal R_d$ from the introduction. Our results are multivariate analogues of the one-dimensional results of \cite{4}. We first collect some elementary properties of $\mathcal C$ and $\mathcal S$, in particular which identities for sine and cosine carry over to them. We then compute the scalar products of the dilated functions $\mathcal C_J$ and $\mathcal S_J$ for $J \in \Zdpos$ (Theorem \ref{SKP2}, extending \cite[Lem. 2.1]{4}) and show that the truncated Gram matrices of the cosine part and of the sine part have the same spectra, which extends \cite[Thm. 2.2]{4} to arbitrary dimensions $d \in \N$. Finally, we write $\mathcal R_d$ as a mixed system in the sense of Theorem \ref{MixedZystems}. This is used to show that $\mathcal R_d$ is a Schauder basis of $L_q([0,1]^d)$ and, for $q=2$, a Riesz basis with the improved lower Riesz constant $\frac 2 3$ (Theorem \ref{Rdbasis}). 

We start with a positive result on the generalisation of trigonometric identities.




\begin{lemma}[\textbf{Double Argument Theorem for $\mathcal C$}] \label{DWSC}
    For all $x \in \R$ it holds that $$\mathcal C(2x)= \mathcal C(x)^2 - \mathcal S(x)^2.$$
\end{lemma}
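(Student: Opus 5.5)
Both sides of the identity are 1-periodic, since $\mathcal C$ and $\mathcal S$ are 1-periodic by definition and hence so is $x\mapsto\mathcal C(2x)$. It therefore suffices to verify the identity for $x\in[0,1)$. The plan is a direct piecewise computation. First we write $\mathcal C$ and $\mathcal S$ explicitly as piecewise linear functions on the four quarters $[0,\frac14)$, $[\frac14,\frac12)$, $[\frac12,\frac34)$, $[\frac34,1)$. Unfolding the absolute values in Definition~\ref{Zacken} gives
$$\mathcal C(x)=1-4x \text{ on } [0,\tfrac12), \qquad \mathcal C(x)=4x-3 \text{ on } [\tfrac12,1),$$
and, since $4|x-\frac14|\le 2$ exactly for $x\in[-\frac14,\frac34]$,
$$\mathcal S(x)=4x \text{ on } [0,\tfrac14),\quad \mathcal S(x)=2-4x \text{ on } [\tfrac14,\tfrac34),\quad \mathcal S(x)=4x-4 \text{ on } [\tfrac34,1).$$

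Next we use symmetry to reduce the number of cases. From these formulas, $\mathcal C(x+\frac12)=-\mathcal C(x)$ and $\mathcal S(x+\frac12)=-\mathcal S(x)$. Hence the right-hand side $\mathcal C(x)^2-\mathcal S(x)^2$ is $\frac12$-periodic. The left-hand side $\mathcal C(2x)$ is also $\frac12$-periodic. It therefore suffices to check $x\in[0,\frac12)$, which splits into two quarters.

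On $[0,\frac14)$ we have $2x\in[0,\frac12)$, so $\mathcal C(2x)=1-8x$. The right-hand side is $(1-4x)^2-16x^2=1-8x$, so the two agree. On $[\frac14,\frac12)$ we have $2x\in[\frac12,1)$, so $\mathcal C(2x)=8x-3$. The right-hand side is $(1-4x)^2-(2-4x)^2=(-1)(3-8x)=8x-3$, so again they agree.

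The only real obstacle is bookkeeping: unfolding the nested absolute value in $\mathcal S$ correctly and pinning down the breakpoints. As a safeguard we check continuity at the breakpoints: $\mathcal S(\frac14)=1$, $\mathcal S(\frac34)=-1$, and $\mathcal C(\frac12)=-1$. Since all pieces are polynomials, no measure-theoretic issues arise and the identity holds for every $x\in\R$, not merely almost everywhere.
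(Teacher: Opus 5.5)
Your proof is correct and follows the paper's route. Like the paper, you reduce to $[0,1)$ by periodicity, write $\mathcal C$ and $\mathcal S$ as the same explicit piecewise linear formulas, and check the identity by direct computation on each quarter. The one difference is that you use the half-period antisymmetry $\mathcal C(x+\tfrac12)=-\mathcal C(x)$, $\mathcal S(x+\tfrac12)=-\mathcal S(x)$ (which does hold, and of which only the absolute-value version is needed) to reduce the paper's four cases to two.
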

\begin{proof}
Since both maps are 1-periodic, it is enough to show equality on [0,1). It holds
$$\mathcal C(x) = \begin{cases}
    1-4x, &\text{ if } 0 \leq x \leq \frac 1 2, \\
    4x-3, &\text{ if } \frac 1 2 < x < 1,
\end{cases}$$ 
as well as 
$$\mathcal S(x) = \begin{cases}
    4x, &\text{ if } 0 \leq x \leq \frac 1 4, \\
    2-4x, &\text{ if } \frac 1 4 < x < \frac 3 4, \\
    4x-4, &\text{ if } \frac 3 4 \leq x < 1.
\end{cases}$$ 
We will distinguish the cases $x \in [0, \frac 1 4], (\frac 1 4, \frac 1 2], (\frac 1 2, \frac 3 4), [\frac 3 4, 1).$ This yields \\
\begin{itemize}
    \item $x \in {[0,\frac 1 4]}: \mathcal C(2x) = 1-8x = (1-4x)^2-(4x)^2 = \mathcal C(x)^2-\mathcal S(x)^2,$
    \item $x \in {(\frac 1 4, \frac 1 2]}: \mathcal C(2x)=8x-3 = (1-4x)^2-(2-4x)^2 = \mathcal C(x)^2-\mathcal S(x)^2,$ 
    \item $x \in {(\frac 1 2, \frac 3 4)}: \mathcal C(2x) = \mathcal C(2x-1) = 5-8x = (4x-3)^2-(2-4x)^2 = \mathcal C(x)^2-\mathcal S(x)^2,$ 
    \item $x \in {[\frac 3 4, 1)}: \mathcal C(2x) = \mathcal C(2x-1) = 8x-7 = (4x-3)^2 - (4x-4)^2 = \mathcal C(x)^2-\mathcal S(x)^2,$ 
\end{itemize} 
which completes the proof.
\end{proof}

\noindent This is quite remarkable since $\mathcal C(2x)$ is again a piecewise linear function, whereas $\mathcal C(x)^2 - \mathcal S(x)^2$ a priori might not be. 
Also interesting is that the addition theorem of cosine also holds partially, namely on $[0, \frac 1 4]^2$ as for all $x,y \in [0,\frac 1 4]$ we have the equality $$\mathcal C(x+y)= 1-4(x+y) = (1-4x)(1-4y) - (4x)(4y) = \mathcal C(x) \mathcal C(y) - \mathcal S(x) \mathcal S(y).$$
Sadly for general $x,y \in \R$ this is not the case. Taking $x=\frac{1}{5}, y=\frac{1}{3}$ yields $$\mathcal C\left(\frac 1 5 + \frac 1 3\right) = -\frac {13} {15} \neq - \frac 3 5 = -\frac 1 {15} - \frac 4 5 \cdot \frac 2 3 = \mathcal C\left(\frac 1 5\right) \mathcal C\left(\frac 1 3\right) - \mathcal S\left(\frac 1 5\right) \mathcal S\left(\frac 1 3\right).$$
Similar statements regarding $\mathcal S$ turn out to be false. The analogue to Lemma \ref{DWSC} for $\mathcal S$ would take the form $\mathcal S(2x)=2 \mathcal S(x) \mathcal C(x).$ This cannot hold true, since the expression 
on the right hand side depends quadratically on $x,$ while $\mathcal S(2x)$ is again piecewise linear. Consequently the addition theorem of the sine function cannot be carried over to $\mathcal S.$ Unsurprisingly, the Pythagorean identity $\cos^2+\sin^2=1$ (which is characteristic for the trigonometric functions) does not generalise to $\mathcal C$ and $\mathcal S$ either. However, because sine and cosine are piecewise convex where the inflection points are located at the roots, we can infer $|\mathcal C| \leq |\cos|, |\mathcal S| \leq |\sin|$ and $\mathcal C^2+ \mathcal S^2 \leq 1.$

The scalar products of the functions $\mathcal C_J$ and $\mathcal S_J$ determine the Gram matrices of $\mathcal R_d$ and whence its Riesz constants. For $d=1$ they were computed in \cite[Lem. 2.1]{4}: all scalar products vanish unless the frequencies are oddly co-linear, in which case they are given by an explicit rational number, and the cosine part and the sine part are orthogonal to each other. Moreover, it was shown in \cite[Thm. 2.2]{4} that the truncated Gram matrices of the cosine system and of the sine system are similar, so that their spectra coincide. We now extend both statements to arbitrary dimensions $d$. First we compute the scalar products of $\mathcal C_J$ and $\mathcal S_J$ for $J\in\Zdpos$ by induction over the dimension $d$, without using Fourier analysis (Theorem \ref{SKP2}). Afterwards we show that the truncated Gram matrices of the cosine part and of the sine part of $\mathcal R_d$ are similar (Theorem \ref{Gram}). Recall that $\alpha,\beta\in\Zdpos$ are called oddly co-linear if there exist $p,q\in\N_0$ with $\gcd(2p+1,2q+1)=1$ and $\alpha=\frac{2p+1}{2q+1}\beta$ and that in this case $p$ and $q$ are uniquely determined.

\begin{theorem}[\textbf{Scalar products of $\mathcal C_\alpha, \mathcal S_\alpha$}]\label{SKP2}
    Let $d \in \N, \alpha,\beta \in \Zdpos.$
    \begin{enumerate}[label=(\roman*)]
        \item If $\alpha,\beta$ are oddly co-linear, that is, if there are $p,q \in \N_0$ such that $\alpha = \frac {2p+1} {2q+1} \beta,$ and $\gcd(2p+1,2q+1)=1,$ then \begin{align*}
        3 \langle \mathcal C_\alpha, \mathcal C_\beta \rangle = \frac 1 {(2p+1)^2(2q+1)^2} \quad \text{ and } \quad
        3 \langle \mathcal S_\alpha, \mathcal S_\beta \rangle = \frac {(-1)^{p+q}} {(2p+1)^2(2q+1)^2}.  
        \end{align*}
        \item If $\alpha, \beta$ are not oddly co-linear, then \[\langle \mathcal C_\alpha, \mathcal C_\beta \rangle = 0 = \langle \mathcal S_\alpha, \mathcal S_\beta \rangle.\]
        \item Further, \[\langle \mathcal C_\alpha, \mathcal S_\beta \rangle = 0.\]
    \end{enumerate}
\end{theorem}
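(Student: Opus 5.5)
We compute the scalar products through the Fourier expansions of $\mathcal C$ and $\mathcal S$, which reduces everything to orthogonality of the trigonometric system $\mathcal Z^e$ and a count of solutions of $n\alpha=m\beta$. This is in contrast to the induction over $d$ announced above.

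\textbf{Step 1: Fourier series of $\mathcal C$ and $\mathcal S$.} A short computation on the pieces listed in the proof of Lemma~\ref{DWSC} gives, with $\chi$ the non-principal character modulo $4$ ($\chi(n)=(-1)^{(n-1)/2}$ for odd $n$, and $\chi(n)=0$ for even $n$),
\[
\mathcal C=\frac{8}{\pi^2}\sum_{n\ \mathrm{odd}}\frac{c_n}{n^2},\qquad \mathcal S=\frac{8}{\pi^2}\sum_{n\ \mathrm{odd}}\frac{\chi(n)\,s_n}{n^2}.
\]
The cosine series follows because $\mathcal C$ is even with mean zero and satisfies $\mathcal C(x+\tfrac12)=-\mathcal C(x)$. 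The sine series follows because $\mathcal S$ is odd and is the quarter-shifted triangle wave. Both series converge absolutely and uniformly. Since $J^T(\cdot)$ is linear, we may substitute $J^Tx$ termwise and obtain
\[
\mathcal C_\alpha=\frac{8}{\pi^2}\sum_{n\ \mathrm{odd}}\frac{c_{n\alpha}}{n^2},\qquad \mathcal S_\alpha=\frac{8}{\pi^2}\sum_{n\ \mathrm{odd}}\frac{\chi(n)\,s_{n\alpha}}{n^2},
\]
again with uniform convergence. Hence scalar products can be computed termwise.

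\textbf{Step 2: orthogonality.} For $K,L\in\Zdpos$ we have $\langle c_K,s_L\rangle=0$ and $\langle c_K,c_L\rangle=\langle s_K,s_L\rangle=\tfrac12\delta_{K,L}$. This is the orthonormality of $\mathcal Z^e$ used in Lemma~\ref{Mainbases}; the relation $K=-L$ is impossible because the first non-zero entries of $K$ and $L$ are both positive. Part (iii) follows immediately, since every cross term vanishes. For (i) and (ii) we obtain
\[
\langle \mathcal C_\alpha,\mathcal C_\beta\rangle=\frac{32}{\pi^4}\sum_{\substack{n,m\ \mathrm{odd}\\ n\alpha=m\beta}}\frac1{n^2m^2},\qquad \langle \mathcal S_\alpha,\mathcal S_\beta\rangle=\frac{32}{\pi^4}\sum_{\substack{n,m\ \mathrm{odd}\\ n\alpha=m\beta}}\frac{\chi(n)\chi(m)}{n^2m^2}.
\]

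\textbf{Step 3: solving $n\alpha=m\beta$ with $n,m$ odd.} This is the main step. If a solution exists, then $\alpha=\frac mn\beta$ with $\frac mn>0$. Writing $\frac mn$ in lowest terms as $\frac{2p+1}{2q+1}$ shows that $\alpha,\beta$ are oddly co-linear. Consequently, if they are not oddly co-linear, both sums are empty and (ii) holds.

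Conversely, suppose $\alpha=\frac{2p+1}{2q+1}\beta$ with $\gcd(2p+1,2q+1)=1$. Pick an index $i$ with $\beta_i\neq0$. Then $n\alpha_i=m\beta_i$ gives $n(2p+1)=m(2q+1)$. By coprimality the solutions are exactly $n=(2q+1)t$ and $m=(2p+1)t$ with $t$ odd, since $n$ and $m$ must both be odd.

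\textbf{Step 4: evaluation.} For the cosine part,
\[
\sum_{t\ \mathrm{odd}}\frac{1}{t^4(2p+1)^2(2q+1)^2}=\frac{\pi^4}{96}\cdot\frac{1}{(2p+1)^2(2q+1)^2},
\]
and multiplying by $\frac{32}{\pi^4}$ gives $\frac{1}{3(2p+1)^2(2q+1)^2}$.

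For the sine part, complete multiplicativity of $\chi$ gives
\[
\chi(n)\chi(m)=\chi(2q+1)\chi(2p+1)\chi(t)^2=(-1)^{q}(-1)^{p},
\]
since $\chi(t)^2=1$ for odd $t$. This yields the stated factor $(-1)^{p+q}$.

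The only delicate points are verifying the Fourier coefficients of $\mathcal S$, including the sign pattern $\chi$, and the positivity argument that rules out $K=-L$. Everything else is routine.
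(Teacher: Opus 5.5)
Your proof is correct, but it takes a genuinely different route from the paper. The paper argues by induction on $d$. The case $d=1$ is imported from \cite[Lem.~2.1]{4}, after checking that odd co-linearity in $\N$ means equal $2$-adic valuation. In the induction step, a unimodular integer matrix built from the extended Euclidean algorithm maps $\alpha$ to $(\alpha_1,\dots,\alpha_{d-1},\gcd(\alpha_d,\alpha_{d+1}),0)$. By periodicity and Fubini the scalar product then either vanishes (the inner integral of a mean-zero periodic function) or drops to dimension $d$. Most of the effort goes into a case analysis showing that odd co-linearity is correctly transported by this substitution.

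You instead expand $\mathcal C_\alpha$ and $\mathcal S_\beta$ in the uniformly convergent Fourier series that the paper itself quotes from \cite{mathcalCS} in the proof of Theorem~\ref{Rdbasis}. Everything then reduces to orthonormality of $\mathcal Z^e$ and to solving $n\alpha=m\beta$ in odd integers. The details check out: $\mathcal S(x)=\mathcal C(x-\frac14)$ yields the sign pattern $\chi$, $\sum_{t\text{ odd}}t^{-4}=\frac{\pi^4}{96}$ yields the factor $\frac13$, and $K=-L$ is indeed impossible in $\Zdpos$.

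Your route is shorter, treats all $d$ at once, and needs neither the one-dimensional input from \cite{4} nor any case distinction. It also fits the paper's own framework: with $\sqrt3\,\mathcal C=C^\tau$, the Gram entries are just $\frac12\sum_{k\alpha=l\beta}\tau_k\tau_l$. As a bonus, let $o_\alpha$ denote the odd part of $\gcd(\alpha_1,\dots,\alpha_d)$. Then $n\alpha=m\beta$ with $n,m$ odd forces $n\,o_\alpha=m\,o_\beta$, so $\chi(n)\chi(m)=\chi(o_\alpha)\chi(o_\beta)$. Your formula therefore gives $\langle\mathcal S_\alpha,\mathcal S_\beta\rangle=\chi(o_\alpha)\chi(o_\beta)\langle\mathcal C_\alpha,\mathcal C_\beta\rangle$ for all $\alpha,\beta$, i.e.\ $G^{\mathcal S}_M=DG^{\mathcal C}_MD$ with $D=\operatorname{diag}(\chi(o_\alpha))$. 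This proves Theorem~\ref{Gram} directly, without \cite[Thm.~2.2]{4} or Lemma~\ref{Streichmat}.

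What the paper's approach buys in exchange is that it avoids Fourier analysis altogether. It isolates a geometric reduction from $d$ dimensions to one that uses only periodicity and vanishing means, so it transfers any one-dimensional Gram computation for ridge functions $f(\alpha^T\cdot)$ to higher dimensions.
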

\begin{proof}
We will show this theorem using induction over $d \in \N.$

We start with the case $d=1$ for which we are going to use \cite[Lem. 2.1]{4}. We will first prove that $\alpha,\beta \in \N$ are oddly co-linear, if and only if, $\nu_2(\alpha)=\nu_2(\beta),$ where $\nu_2(n)$ denotes the exponent corresponding to 2 in the prime factorisation of $n.$

If $\alpha = \frac{2p+1}{2q+1} \beta,$ then $\nu_2(\alpha)=\nu_2(\alpha (2q+1)) = \nu_2(\beta (2p+1)) = \nu_2(\beta).$ On the other hand, if $\nu_2(\alpha)=\nu_2(\beta),$ then $\nu_2(\gcd(\alpha,\beta)) = \nu_2(\alpha)=\nu_2(\beta)$ and $2p+1:= \frac{\alpha}{\gcd(\alpha, \beta)}$ as well as $2q+1:= \frac{\beta}{\gcd(\alpha,\beta)}$ are odd, co-prime numbers such that $\alpha = \frac{ \ \frac{\alpha}{\gcd(\alpha,\beta)} \ }{\frac{\beta}{\gcd(\alpha,\beta)}} \beta = \frac{2p+1}{2q+1} \beta.$ Therefore, due to \cite[Lem. 2.1]{4} it holds $$3|\langle \mathcal S_\alpha, \mathcal S_\beta \rangle|=3 \langle \mathcal C_\alpha, \mathcal C_\beta \rangle = \frac{\gcd(\alpha,\beta)^4}{\alpha^2\beta^2} = \frac{1}{\left(\frac{\alpha}{\gcd(\alpha,\beta)}\right)^2\left(\frac{\beta}{\gcd(\alpha,\beta)}\right)^2} = \frac{1}{(2p+1)^2(2q+1)^2}$$ and $$\operatorname{sgn}(\langle \mathcal S_\alpha, \mathcal S_\beta \rangle) = - (-1)^{\frac{\alpha + \beta}{2\gcd(\alpha,\beta)}} = (-1)^{\frac 1 2(2p+1+2q+1)-1} = (-1)^{p+q}$$ if $\alpha$ and $\beta$ are oddly co-linear. 

Else we have $\nu_2(\alpha) \neq \nu_2(\beta)$ and the scalar products $\langle \mathcal C_\alpha, \mathcal C_\beta \rangle, \langle \mathcal S_\alpha, \mathcal S_\beta \rangle$ vanish according to \cite[Lem. 2.1]{4}(iii). Similarly all mixed scalar products $\langle \mathcal C_\alpha, \mathcal S_\beta \rangle$ vanish regardless of $\nu_2(\alpha), \nu_2(\beta)$ because of \cite[Lem. 2.1]{4}(i).

Now assume the assertion is proved for $d \in \N$ and let $\alpha,\beta \in \Z^{d+1}_+.$ Given a vector $z \in \R^{d+1}$ let $z' \in \R^d$ denote the truncated vector with $z'_j = z_j$ for all $1 \leq j \leq d.$

If $\alpha_{d+1}=0=\beta_{d+1},$ then $\alpha',\beta' \in \Zdpos$ as well as $\alpha^Tx=(\alpha')^Tx, \beta x' = (\beta')^Tx'$ hold true for all $x \in [0,1]^{d+1}$ and by Fubini's Theorem it holds $$\langle \mathcal C_\alpha, \mathcal C_\beta \rangle = \int_{[0,1]^d} \left(\int_{[0,1]} \mathcal C(\alpha^T x) \mathcal C(\beta^T x) \ \mathrm d x_{d+1} \right) \mathrm d x' = \langle \mathcal C_{\alpha'}, \mathcal C_{\beta'}\rangle$$ and analogously for $\mathcal S$ and mixed products. In this case, the theorem directly follows from assumption, as $\alpha$ and $\beta$ are oddly co-linear, if and only if, $\alpha'$ and $\beta'$ are. 

Otherwise we may assume $\alpha_{d+1} \neq 0$ because of symmetry. Let $g:=\gcd(\alpha_d,\alpha_{d+1}),$ then $g \in \N$ and by Euclid's Extended Algorithm we can find integers $u,v \in \Z$ such that $g=u \alpha_d + v \alpha_{d+1}.$ Let $M$ be the block matrix $$M := \begin{pmatrix}
    1 \!\!\! 1_{d-1} & 0_{d-1,2} \\ 0_{2,d-1} & A 
\end{pmatrix}$$ where $A:=\begin{pmatrix}
        u & v \\ -\frac{\alpha_{d+1}}{g} & \frac{\alpha_d}{g}
    \end{pmatrix}$, then all entries of $M$ are integers and $$\det M = \det A = \frac{u \alpha_d + v \alpha_{d+1}}{g}=1.$$ Further, we set $J:= M \alpha = (\alpha_1, ..., \alpha_{d-1}, g, 0)^T, K := M \beta \in \Z^{d+1}_+.$
    Since the integrand is $1$-periodic in every coordinate and $M^T$ is an integer matrix with $\det M^T=1$, the substitution $x\mapsto M^Tx$ maps $[0,1]^{d+1}$ onto a fundamental domain of $\Z^{d+1}$, so by the change of variables formula and periodicity the integral does not change. Using $\alpha^TM^Tx=J^Tx$ and $\beta^TM^Tx=K^Tx$ together with Fubini's Theorem we obtain \begin{align*} \int_{[0,1]^{d+1}} \mathcal C(\alpha^T x) \mathcal C(\beta^T x) \mathrm d x &= \int_{[0,1]^{d+1}} \mathcal C(\alpha^T M^T x) \mathcal C(\beta^T M^T x) \mathrm d x \\ &=\int_{[0,1]^{d+1}} \mathcal C(J^T x) \mathcal C(K^T x) \mathrm d x = \int_{[0,1]^d} \mathcal C_{J'} (x') \left(\int_0^1 \mathcal C((K')^T x' +K_{d+1} x_{d+1}) \mathrm d x_{d+1} \right) \mathrm d x'. \end{align*}
    Here, the 1-periodicity of $\mathcal C$ is used, to ensure $\mathcal C(\alpha^T(x+i)) = \mathcal C(\alpha^Tx + \alpha^Ti) = \mathcal C(\alpha^Tx)$ holds for all $x \in \R^d, i \in \Z^d$ as $\alpha^Ti = \sum \limits_{j=1}^d\alpha_j i_j \in \Z.$ Similarly for $\mathcal S$ and mixed products.
    
    Now there are two cases. First consider the case $K_{d+1} \neq 0.$ Then the inner integral vanishes and consequently the scalar product is 0. If $K_{d+1}=0$ however, then the integrand of the inner integral is constant and the value is $\mathcal C((K')^Tx')=\mathcal C_{K'}(x')$ leading to $\langle \mathcal C_\alpha, \mathcal C_\beta \rangle = \langle \mathcal C_{J'}, \mathcal C_{K'} \rangle.$ Summarised this reads as $$\langle \mathcal C_\alpha, \mathcal C_\beta \rangle = \begin{cases}
        0, &\text{ if } K_{d+1} \neq 0, \\ \langle \mathcal C_{J'}, \mathcal C_{K'} \rangle, &\text{ if } K_{d+1}=0,
    \end{cases}$$ where $K_{d+1}=\frac{-\alpha_{d+1} \beta_d + \alpha_d \beta_{d+1}}{g} \in \Z$ as $g=\gcd(\alpha_d, \alpha_{d+1})$ divides $\alpha_d$ and $\alpha_{d+1}$ and consequently the numerator. A completely analogous calculation shows the same result for $\mathcal S$ as well as mixed products. 
    
    We are now able to show the desired formulae. 
    \begin{itemize}
        \item We have seen $\langle \mathcal C_\alpha, \mathcal S_\beta \rangle = 0,$ if $K_{d+1} \neq 0.$ If $K_{d+1} = 0$ this follows from the induction hypothesis and $\langle \mathcal C_\alpha, \mathcal S_\beta \rangle = \langle \mathcal C_{J'}, \mathcal S_{K'} \rangle = 0$. Similarly $\langle \mathcal C_\beta, \mathcal S_\alpha \rangle=0.$
        \item Let $\alpha = \frac{2p+1}{2q+1} \beta,$ then $J= M \alpha = \frac{2p+1}{2q+1} M \beta = \frac{2p+1}{2q+1} K$ and $J' = \frac{2p+1}{2q+1}K'$ hold. If one of $\alpha_1, ..., \alpha_{d-1}$ is positive, we have $J \in \Z^{d+1}_+$ by definition. Otherwise $\alpha_1, ..., \alpha_{d-1}$ vanish and $g>0$ is the first non-zero entry. In either case it holds $J \in \Z^{d+1}_+$ and $K=\frac{2q+1}{2p+1} J \in \Z^{d+1}_+$. From $K_{d+1} = \frac{2q+1}{2p+1} J_{d+1} = 0,$ we get $$3 \langle \mathcal C_\alpha, \mathcal C_\beta \rangle = 3 \langle \mathcal C_{J'}, \mathcal C_{K'}  \rangle = \frac{1}{(2p+1)^2(2q+1)^2} \quad \text{and} \quad 3 \langle \mathcal S_\alpha, \mathcal S_\beta \rangle = 3 \langle \mathcal S_{J'}, \mathcal S_{K'} \rangle = \frac{(-1)^{p+q}}{(2p+1)^2(2q+1)^2}.$$
        \item Assume now $\alpha, \beta$ not to be oddly co-linear, then there are three cases to be distinguished: 
        \begin{itemize}
        \item[-] If $K_{d+1} \neq 0$ we are done, since the integrals vanish. 
        \item[-] If $K_{d+1}=0$ we again get $\langle \mathcal C_\alpha, \mathcal C_\beta \rangle = \langle \mathcal C_{J'}, \mathcal C_{K'} \rangle$ and exactly the same for $\mathcal S.$ If $(\alpha_1, ..., \alpha_{d-1}), (\beta_1, ..., \beta_{d-1})$ are not oddly co-linear, then $J'$ and $K'$ are not as well and we again get $\langle \mathcal C_\alpha, \mathcal C_\beta \rangle=0$ and $\langle \mathcal S_\alpha, \mathcal S_\beta \rangle=0$. 
        \item[-] It remains to show, that in the last case $(\alpha_1, ..., \alpha_{d-1}) = \frac{2p+1}{2q+1} (\beta_1, ..., \beta_{d-1})$ and $K_{d+1}=0$ the vectors $J',K'$ are not oddly co-linear. Because the first $d-1$ entries coincide with those from $\alpha$ and $\beta,$ we need to prove $g \neq \frac{2p+1}{2q+1} K_d.$
        
        Since $\alpha_{d+1} \neq 0$ was assumed, $\frac{\alpha_d \beta_{d+1}-\alpha_{d+1} \beta_d}{g}=K_{d+1}=0 \text{ implies } \ \alpha_d \frac{\beta_{d+1}}{\alpha_{d+1}}=\beta_d.$
        
        We now distinguish two cases again, namely $\beta_d=0$ and $\beta_d \neq 0.$ First, let us assume $\beta_d$ being zero, then either $\alpha_d=0$ or $\beta_{d+1}=0$ need to hold.
        
        If $\alpha_d=0,$ it follows $\alpha_d = 0 = \frac{2p+1}{2q+1} \cdot 0 = \frac{2p+1}{2q+1} \beta_d.$ We get $\alpha_{d+1} \neq \frac{2p+1}{2q+1} \beta_{d+1}$ because $\alpha, \beta$ are not oddly co-linear. Further, we have $$g=\alpha_{d+1}, \ u=0, \ v=1, \ K_d=u \beta_d + v \beta_{d+1} = \beta_{d+1},$$ yielding $g=\alpha_{d+1}\neq \frac{2p+1}{2q+1}\beta_{d+1}=\frac{2p+1}{2q+1}K_d.$ Second, if $\beta_{d+1}=0,$ then $K_d=0$ and $\frac{2p+1}{2q+1}K_d = 0 \neq g$ ending the proof of the first subcase. 
        
        We may now assume $\beta_d \neq 0$ and rearrange $\alpha_d \frac{\beta_{d+1}}{\alpha_{d+1}}=\beta_d$ to $\alpha_{d+1}=\frac{\beta_{d+1}}{\beta_d}\alpha_d.$
        
        Finally, assume aiming towards a contradiction, that $g=\frac{2p+1}{2q+1}K_d,$ then $$u \alpha_d + v \alpha_{d+1} = g = \frac{2p+1}{2q+1} K_d = u \frac{2p+1}{2q+1} \beta_d + v \frac{2p+1}{2q+1}\beta_{d+1} $$ holds, which in turn implies $$ v \left(\alpha_{d+1}-\frac{2p+1}{2q+1} \beta_{d+1}\right)=u \left(\frac{2p+1}{2q+1} \beta_d-\alpha_d \right).$$
        Plugging in $\alpha_{d+1}=\frac{\beta_{d+1}}{\beta_d}\alpha_d$ and factoring out $\frac{\beta_{d+1}}{\beta_d},$ leads to $$v \frac{\beta_{d+1}}{\beta_d}\left(\alpha_d-\frac{2p+1}{2q+1}\beta_d\right) = u \left(\frac{2p+1}{2q+1} \beta_d-\alpha_d\right).$$ Since $\alpha_d = \frac{2p+1}{2q+1}\beta_d$ would imply $\alpha_{d+1} =\frac{\alpha_d}{\beta_d} \beta_{d+1} = \frac{2p+1}{2q+1} \beta_{d+1}$ and consequently $\alpha=\frac{2p+1}{2q+1} \beta,$ we get $\alpha_d-\frac{2p+1}{2q+1} \beta_d \neq 0$ and may divide both sides by this term. We obtain $v \frac{\beta_{d+1}}{\beta_d} = -u \text{ implying } K_d = u \beta_d + v \beta_{d+1} = 0$ and $g = \frac{2p+1}{2q+1}K_d = 0,$ a contradiction to $g \in \N$, showing $g \neq \frac{2p+1}{2q+1}K_d$ as desired.
        
        Therefore $J',K'$ are not oddly co-linear and $K_{d+1}=0$ implies $$\langle \mathcal C_\alpha, \mathcal C_\beta \rangle = \langle \mathcal C_{J'}, \mathcal C_{K'} \rangle=0 \text{ as well as } \langle \mathcal S_\alpha, \mathcal S_\beta \rangle = \langle \mathcal S_{J'}, \mathcal S_{K'} \rangle=0.$$
        \end{itemize}
    \end{itemize}
\end{proof}

Recall the definition of symmetric groups $\{\sigma:\{1,...,n\} \to \{1,...,n\} \text{ bijective}\}$ which usually are denoted by $S_n$ but will be written as $P_n$ to avoid confusion with $\mathcal S$.

\begin{lemma}[\textbf{Similarity of sub-matrices}]\label{Streichmat}
    Let $k,n \in \N$ such that $n \geq2, 1 \leq k \leq n.$ Further, let $A,B,C,D\in \R^{n \times n}$ such that $D$ is a diagonal matrix with diagonal entries $D_{ii}$ in $\{-1,1\}.$ We define $C^{\dagger k} \in \R^{(n-1) \times (n-1)}$ to be the matrix gained by deleting the $k$-th row and column from $C,$ that is $$C^{\dagger k}_{ij} = \begin{cases}
    C_{ij} &\text{ if } 1 \leq i,j < k,\\
    C_{i(j+1)} &\text{ if } 1 \leq i < k \leq j \leq n-1,\\
    C_{(i+1)j} &\text{ if } 1 \leq j < k \leq i \leq n-1,\\
    C_{(i+1)(j+1)} &\text{ if } k \leq i,j \leq n-1. \\
\end{cases}$$ 
If $B=DAD,$ then $B^{\dagger k} = D^{\dagger k} A^{\dagger k} D^{\dagger k}.$
\end{lemma}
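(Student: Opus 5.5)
The plan is to compare entries directly, using the fact that conjugating by a diagonal matrix acts entrywise. Since $D$ is diagonal with entries $D_{ii}\in\{-1,1\}$, we have $(DAD)_{ij}=\sum_{l,m}D_{il}A_{lm}D_{mj}=D_{ii}A_{ij}D_{jj}$ for all $1\le i,j\le n$. The hypothesis $B=DAD$ is therefore equivalent to
\[
B_{ij}=D_{ii}A_{ij}D_{jj}\quad\text{for all } 1\le i,j\le n .
\]

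Next, introduce the index map $\iota:\{1,\dots,n-1\}\to\{1,\dots,n\}\setminus\{k\}$, defined by $\iota(i)=i$ for $i<k$ and $\iota(i)=i+1$ for $i\ge k$. The four cases in the definition of $C^{\dagger k}$ then collapse into the single formula $C^{\dagger k}_{ij}=C_{\iota(i)\iota(j)}$, valid for every $C\in\R^{n\times n}$.

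Applying this to $D$ shows that $D^{\dagger k}$ is again diagonal. Indeed, $\iota$ is injective, so $i\neq j$ gives $\iota(i)\neq\iota(j)$ and hence $D^{\dagger k}_{ij}=0$. Its diagonal entries are $D^{\dagger k}_{ii}=D_{\iota(i)\iota(i)}\in\{-1,1\}$.

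Repeating the entrywise computation for the product $D^{\dagger k}A^{\dagger k}D^{\dagger k}$ gives
\[
(D^{\dagger k}A^{\dagger k}D^{\dagger k})_{ij}=D_{\iota(i)\iota(i)}A_{\iota(i)\iota(j)}D_{\iota(j)\iota(j)}=B_{\iota(i)\iota(j)}=B^{\dagger k}_{ij},
\]
where the middle equality is the entrywise form of $B=DAD$ evaluated at the indices $\iota(i),\iota(j)$. This proves the claim.

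The only step needing care is rewriting the four-case definition of $C^{\dagger k}$ through $\iota$. After that the argument is routine, with no real obstacle.
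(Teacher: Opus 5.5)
Your proof is correct and follows the paper's argument: both reduce $B=DAD$ to the entrywise identity $B_{ij}=D_{ii}D_{jj}A_{ij}$ and then verify it on the entries of the reduced matrices. Your index map $\iota$ compresses the paper's explicit four-case check into one line, and it also makes explicit why $D^{\dagger k}$ remains diagonal, which the paper only asserts.
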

\begin{proof} Note that $$B_{ij}=(DAD)_{ij}=\sum \limits_{k=1}^n (DA)_{ik} D_{kj} = (DA)_{ij} D_{jj} = \sum \limits_{k=1}^n D_{ik} A_{kj} D_{jj} = D_{ii}D_{jj}A_{ij}$$ holds true for all $1 \leq i,j \leq n.$ Similarly, $D^{\dagger k}$ is also diagonal, such that for all $1 \leq i,j \leq n-1$ it holds \begin{align*} (D^{\dagger k} A^{\dagger k} D^{\dagger k})_{ij}=D^{\dagger k}_{ii} D^{\dagger k}_{jj} A^{\dagger k}_{ij} = \begin{cases}
    D_{ii}D_{jj}A_{ij} &\text{ if } 1 \leq i,j < k,\\
    D_{ii}D_{(j+1)(j+1)}A_{i(j+1)} &\text{ if } 1 \leq i < k \leq j,\\
    D_{(i+1)(i+1)}D_{jj}A_{(i+1)j} &\text{ if } 1 \leq j < k \leq i,\\
    D_{(i+1)(i+1)}D_{(j+1)(j+1)}A_{(i+1)(j+1)} &\text{ if } k \leq i,j, \\
\end{cases} = (DAD)^{\dagger k}_{ij} = B^{\dagger k}_{ij}. \end{align*}
\end{proof}

Note that it holds $D^{-1}=D$ and $(D^{\dagger k})^{-1}=D^{\dagger k}$ as $D^{\dagger k}$ is once again diagonal with entries in $\{-1,1\}.$ Hence $B$ is similar to $A$ and $B^{\dagger k}$ is similar to $A^{\dagger k}$.

The explicit form of the scalar products shows that the Gram matrices decompose into blocks which are indexed by classes of oddly co-linear vectors. This allows us to reduce the comparison of the cosine and the sine system to the one-dimensional case.

\begin{theorem}[\textbf{Spectra of the truncated Gram matrices}] \label{Gram}
Fix $d \in \N.$ Let $M = \{\alpha_1, ..., \alpha_{|M|}\} \subset \{\alpha \in \Zdpos\}$ be non-empty and finite. Let $G^\mathcal C_M$ be the matrix with $(G^\mathcal C_M)_{ij}:=\langle \mathcal C_{\alpha_i}, \mathcal C_{\alpha_j}\rangle$ and define $G^\mathcal S_M$ analogously. Then $G^\mathcal C_M$ is similar to $G^\mathcal S_M$ and, in particular, their spectra coincide. 
\end{theorem}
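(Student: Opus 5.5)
The plan is to exhibit an explicit diagonal matrix $D$ with entries in $\{-1,1\}$ such that $G^\mathcal S_M = D G^\mathcal C_M D$. Since $D^{-1}=D$, this gives similarity and hence equal spectra. By Theorem \ref{SKP2}, $(G^\mathcal C_M)_{ij}$ and $(G^\mathcal S_M)_{ij}$ both vanish unless $\alpha_i,\alpha_j$ are oddly co-linear. If $\alpha_i = \frac{2p+1}{2q+1}\alpha_j$, they differ only by the factor $(-1)^{p+q}$. So it suffices to find signs $\varepsilon_i\in\{-1,1\}$ with $\varepsilon_i\varepsilon_j=(-1)^{p+q}$ whenever $\alpha_i,\alpha_j$ are oddly co-linear with parameters $p,q$. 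We then set $D_{ii}:=\varepsilon_i$ and use the entrywise identity $(DAD)_{ij}=D_{ii}D_{jj}A_{ij}$ from the proof of Lemma \ref{Streichmat}.

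To construct the signs, decompose every $\alpha\in\Zdpos$ as $\alpha=2^{\nu}m\,P$. Here $P$ is primitive (as in the proof of Lemma \ref{ellhatd}), $\alpha_0=2^\nu m$ with $m$ odd, and $\nu=\nu_2(\alpha_0)$. Define
\[
\varepsilon(\alpha):=(-1)^{(m-1)/2},
\]
that is, $\varepsilon(\alpha)=1$ if $m\equiv 1 \pmod 4$ and $-1$ if $m\equiv 3\pmod 4$. Put $\varepsilon_i:=\varepsilon(\alpha_i)$.

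Next, check the sign identity. If $\alpha_i=\frac{2p+1}{2q+1}\alpha_j$ with $\gcd(2p+1,2q+1)=1$, then both vectors have the same primitive direction $P$. Their gcd's satisfy $(2q+1)(\alpha_i)_0=(2p+1)(\alpha_j)_0$, so they share $\nu$, and for the odd parts we get $(2q+1)m_i=(2p+1)m_j$. By coprimality, $m_i=(2p+1)t$ and $m_j=(2q+1)t$ for some odd $t$. Now use the elementary multiplicativity $(-1)^{(ab-1)/2}=(-1)^{(a-1)/2}(-1)^{(b-1)/2}$ for odd $a,b$. This gives
\[
\varepsilon_i\varepsilon_j=(-1)^{p}(-1)^{q}\big((-1)^{(t-1)/2}\big)^2=(-1)^{p+q},
\]
as required. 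The diagonal entries are consistent, since $i=j$ corresponds to $p=q=0$.

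Combining these steps gives $G^\mathcal S_M=DG^\mathcal C_MD$ entrywise. The off-colinear zero entries match trivially, and the co-linear entries match by the two formulas of Theorem \ref{SKP2}(i). The main obstacle I anticipate is making sure the sign assignment is globally consistent rather than only pairwise. The approach above avoids a block-by-block argument by defining $\varepsilon$ intrinsically through the character $m\mapsto(-1)^{(m-1)/2}$ modulo $4$. If this bookkeeping turned out to be awkward, the fallback would be to treat each class of oddly co-linear vectors separately. Each such class, intersected with $M$, is identified with a finite set of odd multiples $2^\nu t P$, which reduces the problem to the one-dimensional situation of \cite[Thm. 2.2]{4}. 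The blocks would then be reassembled via a permutation, which preserves similarity.
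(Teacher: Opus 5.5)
Your proof is correct, and it differs genuinely from the paper's, whose argument is essentially your fallback. The paper partitions $M$ into classes of oddly co-linear vectors and reorders $M$ so that both Gram matrices become block diagonal. It identifies each block with a principal submatrix of the one-dimensional Gram matrix via the first coordinate $l$ that is nonzero on the class. It then imports the one-dimensional similarity from \cite[Thm.~2.2]{4}, using Lemma~\ref{Streichmat} to pass to submatrices.

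You instead define one global sign $\varepsilon(\alpha)=\chi(m)$, where $\chi(n)=(-1)^{(n-1)/2}$ is the non-principal character modulo $4$ and $m$ is the odd part of $\alpha_0$. You then verify directly from Theorem~\ref{SKP2} that $\varepsilon(\alpha)\varepsilon(\beta)=(-1)^{p+q}$ on every oddly co-linear pair. The step $m_i=(2p+1)t$, $m_j=(2q+1)t$, followed by complete multiplicativity of $\chi$ on odd integers, is sound. Because $\varepsilon$ is defined intrinsically, there is no consistency issue and no reordering is needed.

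Your route buys three things. It is self-contained, since only Theorem~\ref{SKP2} is used and neither \cite[Thm.~2.2]{4} nor Lemma~\ref{Streichmat} is needed. It is uniform in $d$. It also explains where the signs come from: $\chi$ is precisely the twist relating the two coefficient sequences in Theorem~\ref{Rdbasis}, since $\lambda_n=\chi(n)\tau_n$ for odd $n$. The paper's route instead presents the statement as a direct lift of the one-dimensional result of \cite{4}, in line with its aim of extending that work, at the price of extra block bookkeeping.
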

\begin{proof}
First we notice that odd co-linearity defines an equivalence relation on $M.$
    \begin{itemize}
        \item Reflexivity: For any $\alpha \in M$ we choose $p=0=q$ to get $\alpha = \frac{1}{1} \alpha.$
        \item Symmetry: Let $\alpha, \beta \in M$ and $\alpha=\frac{2p+1}{2q+1} \beta,$ then $\beta = \frac{2q+1}{2p+1} \alpha.$
        \item Transitivity: Let $\alpha, \beta, \gamma \in M$ with $\alpha = \frac{2p+1}{2q+1} \beta, \beta = \frac{2r+1}{2s+1} \gamma,$ then $\alpha = \frac{(2p+1)(2r+1)}{(2q+1)(2s+1)}\gamma.$ Numerator and denominator of this fraction are odd as products of odd numbers and simplifying gives co-prime odd numbers as numerator and denominator.
    \end{itemize}

Therefore, $M$ is partitioned into finitely many pairwise disjoint equivalence classes. We may order $M$ in any way and will do so by grouping equivalence classes. To be more precise: 

Let $Q_1=\{\alpha^1_1,  ..., \alpha^1_{|Q_1|}\}, Q_2=\{\alpha^2_1, ..., \alpha^2_{|Q_2|}\}, ..., Q_K=\{\alpha^K_1, ..., \alpha^K_{|Q_K|}\}$ be the equivalence classes arising from odd co-linearity. 
Because of Theorem \ref{SKP2}, the resulting matrices are in block-diagonal form and we can write $G^\mathcal C_{M,k} = (\langle \mathcal C_{\alpha_i^k}, \mathcal C_{\alpha_j^k} \rangle)_{i,j \in \{1,..., |Q_k|\}}$ and define $G^\mathcal S_{M,k}$ analogously for all $1 \leq k \leq K$. Then $G^\mathcal C_M$ is similar to $$\begin{pmatrix}
    G^\mathcal C_{M,1} & 0 & ... & 0 \\
    0 & G^\mathcal C_{M,2} & ... & 0 \\ 
    \vdots & \vdots & \ddots & 0 \\ 0 & 0 & ... & G^\mathcal C_{M,K}
\end{pmatrix}$$ and all the same for $\mathcal S.$ 

For any equivalence class $Q_k$ and any $l \in \{1, ..., d\},$ $\alpha_l=0$ holds either for all $\alpha \in Q_k$ or for none. Since $\alpha \in \Zdpos$ there has to be a smallest $l \in \{1, ..., d\}$ such that $\alpha_l > 0$ for all $\alpha \in Q_k.$

Since the value of $\langle \mathcal C_{\alpha_i^k}, \mathcal C_{\alpha_j^k} \rangle$ only depends on the integers defining the co-linearity scalar which themselves are uniquely determined by the $l$-th entries of the vectors as they are assumed to be co-prime, due to Theorem \ref{SKP2}(i) and (ii), we get $\langle \mathcal C_{\alpha_i^k}, \mathcal C_{\alpha_j^k} \rangle=\langle \mathcal C_{(\alpha_i^k)_l}, \mathcal C_{(\alpha_j^k)_l} \rangle$ and equally for $\mathcal S.$ Thus $G^\mathcal C_{M,k}$ is equal to a block sub-matrix of the one-dimensional Gram matrix and the same is true for $G_{M,k}^\mathcal S.$ These are similar because of \cite[Thm. 2.2]{4} and Lemma \ref{Streichmat}. Therefore, $G_M^\mathcal C$ and $G_M^\mathcal S$ are similar as well, which in particular implies their spectra to coincide. Note that the transformation matrix is again block diagonal, where each block is a diagonal matrix with entries in $\{-1,1\}.$ Therefore, $DG_M^\mathcal C D = G_M^\mathcal S$ holds for some diagonal matrix $D$ with $D_{jj} \in \{-1,1\}$ for all $1 \leq j \leq |M|.$
\end{proof}

We now come to the main result of this section. The Fourier coefficients of $\sqrt3\,\mathcal C$ and $\sqrt3\,\mathcal S$ define two sequences $\tau$ and $\lambda$ for which $\mathcal R_d$ is a mixed system. Since $\frac1{\tau_1}\tau$ and $\frac1{\lambda_1}\lambda$ are completely multiplicative, Theorem \ref{Basiscrit}(ii) applies.  
As a result, we obtain the lower Riesz constant $A=\frac 2 3$ for $\mathcal R_d$. This improves the constant $\frac 1 2$ from \cite{mathcalCS,2} and the constant $\frac{96}{\pi^4}\big(2-\frac{\pi^2}{8}\big)^2\approx 0.579$ from \cite{4}, while the upper constant $B=\frac 3 2$ is the same as before. All constants are independent of the dimension $d$.

\begin{theorem}[\textbf{Basis properties of $\mathcal R_d$}]\label{Rdbasis}
    Let $q \in (1,\infty)$ and $d \in \N.$ The system $\mathcal R_d$ forms a Schauder basis of $L_q([0,1]^d).$ In case of $q=2,$ it forms a Riesz basis where $A=\frac 2 3$ and $B=\frac 3 2.$
\end{theorem}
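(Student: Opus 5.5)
The plan is to realise $\mathcal R_d$ as a mixed system $\{1\}\cup\{C^\tau_J,S^\lambda_J:J\in\Zdpos\}$ in the sense of Theorem~\ref{MixedZystems} with $k_0=1$. The sequences $\tau,\lambda\in\ell_1$ are read off from the Fourier series of $\sqrt3\,\mathcal C$ and $\sqrt3\,\mathcal S$. We then verify the hypotheses of Theorem~\ref{Basiscrit}(ii) for both sequences and compute the constants explicitly.

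\textbf{Step 1 (Fourier expansions).} On $[0,1)$ the function $\mathcal C$ is the even triangle wave with mean zero, and $\mathcal S(x)=\mathcal C(x-\frac14)$. A direct computation of the Fourier coefficients by integrating the piecewise linear pieces should give
\[
\mathcal C(x)=\frac{8}{\pi^2}\sum_{k\ \mathrm{odd}}\frac{\cos(2\pi kx)}{k^2},\qquad
\mathcal S(x)=\frac{8}{\pi^2}\sum_{k\ \mathrm{odd}}\frac{(-1)^{(k-1)/2}\sin(2\pi kx)}{k^2}.
\]
As a sanity check, at $x=0$ and $x=\frac14$ these evaluate to $1$, using $\sum_{k\ \mathrm{odd}}k^{-2}=\frac{\pi^2}8$. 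The coefficients are summable, so both series converge uniformly, and the identities hold pointwise since both sides are continuous.

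Hence $\sqrt3\,\mathcal C=C^\tau$ and $\sqrt3\,\mathcal S=S^\lambda$, where
\[
\tau_k=c\,\chi_0(k)k^{-2},\qquad \lambda_k=c\,\chi_4(k)k^{-2},\qquad c:=\frac{8\sqrt3}{\sqrt2\,\pi^2}.
\]
Here $\chi_0$ is the principal character mod $2$ and $\chi_4$ the nontrivial character mod $4$. Composing with $x\mapsto J^Tx$ gives $\sqrt3\,\mathcal C_J=C^\tau_J$ and $\sqrt3\,\mathcal S_J=S^\lambda_J$, so $\mathcal R_d$ is exactly the mixed system with $k_0=1$.

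\textbf{Step 2 (hypotheses and constants).} Since Dirichlet characters are completely multiplicative, $\frac1{\tau_1}\tau=\chi_0(k)k^{-2}$ and $\frac1{\lambda_1}\lambda=\chi_4(k)k^{-2}$ are completely multiplicative, with $\tau_1=\lambda_1=c\neq0$. Theorem~\ref{Basiscrit}(ii) therefore shows that $\mathcal Z^\tau$ and $\mathcal Z^\lambda$ are Schauder bases of $L_q([0,1]^d)$, and Riesz bases for $q=2$.

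For the upper constant, $\|\tau\|_{\ell_1}=\|\lambda\|_{\ell_1}=c\cdot\frac{\pi^2}{8}=\sqrt{3/2}$, so $B_\tau=B_\lambda=\frac32$.

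For the lower constant, $|\mu\cdot\tau|=|\mu\cdot\lambda|$, and by the Euler product
\[
\|\mu\cdot\tau\|_{\ell_1}=c\prod_{p\ \mathrm{odd}}\bigl(1+p^{-2}\bigr)=c\,\frac{\zeta(2)}{\zeta(4)}\cdot\frac45=c\,\frac{12}{\pi^2}.
\]
Thus
\[
A_\tau=A_\lambda=\frac{c^4}{\|\mu\cdot\tau\|_{\ell_1}^2}=\frac{c^2\pi^4}{144}=\frac{96}{144}=\frac23,
\]
using $c^2=\frac{96}{\pi^4}$.

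\textbf{Step 3 (conclusion).} Theorem~\ref{MixedZystems} with $k_0=1$ now yields that $\mathcal R_d$ is a Schauder basis of $L_q([0,1]^d)$. For $q=2$ it is a Riesz basis with
\[
A=\min\{1,\tfrac23,\tfrac23\}=\tfrac23,\qquad B=\max\{1,\tfrac32,\tfrac32\}=\tfrac32,
\]
independently of $d$.

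The main obstacle is Step~1: the Fourier coefficient computation, and in particular getting the sign pattern $(-1)^{(k-1)/2}$ for $\mathcal S$ right, since that is what makes $\frac1{\lambda_1}\lambda$ the completely multiplicative sequence $\chi_4(k)k^{-2}$. The Euler product evaluation in Step~2 is routine once the support on odd squarefree $k$ is identified.
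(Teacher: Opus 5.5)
Your proposal is correct and follows the paper's proof essentially step by step. You realise $\mathcal R_d$ as the mixed system $\{1\}\cup\{C^\tau_J,S^\lambda_J\}$ with $\tau_k=\frac{4\sqrt6}{\pi^2}k^{-2}$ on odd $k$ and $\lambda$ its sign-twisted version, apply Theorem~\ref{Basiscrit}(ii) via complete multiplicativity, compute $\|\mu\cdot\tau\|_{\ell_1}=\frac{48\sqrt6}{\pi^4}$ to get $A=\frac23$ and $B=\frac32$, and conclude with Theorem~\ref{MixedZystems} for $k_0=1$. The only differences are cosmetic: you derive the Fourier series yourself from $\mathcal S(x)=\mathcal C(x-\frac14)$ rather than citing it, and you evaluate $\sum_{k\ \mathrm{odd}}|\mu(k)|k^{-2}=\frac{12}{\pi^2}$ by an Euler product over odd primes rather than through the Liouville function and $\eta(s)=\zeta(s)/\zeta(2s)$.
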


\begin{proof}
    We will first prove the formula $\sum \limits_{n=1}^\infty \frac{|\mu(2n-1)|}{(2n-1)^2}=\frac{12}{\pi^2},$ where $\mu$ is the Möbius function. Let $L: \N \to \{-1,1\}$ be the Liouville function, that is introduced in \cite[Ch. 2.12]{Numertheory}. There, it is shown, that $L$ is completely multiplicative and that it is the Dirichlet inverse of $|\mu|$. 
    
    Since $L$ and $|\mu|$ are bounded by 1 the Dirichlet series $\zeta(s):=\sum \limits_{n=1}^\infty \frac 1 {n^s}$, $\xi(s):=\sum \limits_{n=1}^\infty \frac{L(n)}{n^s}$ and $\eta(s):=\sum \limits_{n=1}^\infty \frac{|\mu(n)|}{n^s}$ converge for all $s \in \C$ with $\text{Re}(s)>1.$ In \cite[Ex. 6 in Ch. 11.4]{Numertheory} the formula $\eta(s)=
    \frac{\zeta(s)}{\zeta(2s)}$ for all $s \in \C$ such that $\text{Re}(s)>1$ is derived. Moreover, for all $s \in \C$ such that $\text{Re}(s)>1$ it holds $\xi(s) \eta(s)=\sum \limits_{n=1}^\infty \frac{(L*|\mu|)(n)}{n^s}=\sum \limits_{n=1}^\infty \frac{e(n)}{n^s}=1$ and $\xi(s)=\frac 1 {\eta(s)}=
    \frac{\zeta(2s)}{\zeta(s)}.$ 
 Now $\sum \limits_{n=1}^\infty \frac{|\mu(n)|}{n^2}=
 \eta(2)=\frac{\zeta(2)}{\zeta(4)}=\frac{15}{\pi^2}$ follows. Further, \begin{align*}\frac{15}{\pi^2}=\sum \limits_{n=1}^\infty \frac{|\mu(n)|}{n^2}&=\sum \limits_{n=1}^\infty \frac{|\mu(2n)|}{(2n)^2}+\sum \limits_{n=1}^\infty \frac{|\mu(2n-1)|}{(2n-1)^2} \\ &=\sum \limits_{n \in 2\N} \frac{|\mu(2n)|}{(2n)^2}+\sum \limits_{n \in 2\N-1} \frac{|\mu(2n)|}{(2n)^2}+\sum \limits_{n \in 2\N-1} \frac{|\mu(n)|}{n^2} \\ &= \frac{|\mu(2)|}{2^2}\sum \limits_{n \in 2\N-1} \frac{|\mu(n)|}{n^2}+\sum \limits_{n \in 2\N-1} \frac{|\mu(n)|}{n^2}=\frac 5 4 \sum \limits_{n \in 2\N-1} \frac{|
 \mu(n)|}{n^2} \end{align*} holds true, 
 because $2^2 \mid 2n$ for even $n$ implies $\mu(2n)=0$ in this case, and for odd $n$ the multiplicativity of $\mu$ together with $\gcd(2,n)=1$ yields $|\mu(2n)|=|\mu(2)|\,|\mu(n)|=|\mu(n)|$.
 Thus, the desired formula is achieved after division by $\frac 5 4.$

    By \cite[p.166]{mathcalCS} we see that it holds $$\sqrt 3 \mathcal C = \sqrt 2 \sum \limits_{k=1}^\infty \frac{4 \sqrt 6}{\pi^2} \frac{1}{(2k-1)^2}c_{2k-1}=C^\tau \quad \text{ and } \quad \sqrt 3 \mathcal S=\sqrt 2 \sum \limits_{k=1}^\infty \frac{4 \sqrt 6}{\pi^2} \frac{(-1)^{k+1}}{(2k-1)^2} s_{2k-1}=S^\lambda,$$ where $$\tau_k:= \begin{cases}
        0, &\text{ if } k \text{ is even, } \\ \frac{4\sqrt 6}{\pi^2} \frac 1 {k^2}, &\text{ if } k \text{ is odd,}
    \end{cases}$$ and 
    $\lambda_{2k-1}=(-1)^{k+1}\tau_{2k-1}$. Both sequences $\tau$ and $\lambda$ satisfy the hypotheses of Theorem \ref{Basiscrit}(ii) and Theorem \ref{MixedZystems}. 
In particular, using Theorem \ref{Basiscrit}(ii)  with $\tau_1=\lambda_1=\frac{4\sqrt6}{\pi^2}$, $\|\tau\|_{\ell_1}=\|\lambda\|_{\ell_1}=\frac{4\sqrt6}{\pi^2}\cdot\frac{\pi^2}{8}=\frac{\sqrt6}{2}$ and $\|\mu\cdot\tau\|_{\ell_1}=\|\mu\cdot\lambda\|_{\ell_1}=\frac{48\sqrt6}{\pi^4}$ we obtain
$$A_\tau=A_\lambda=\frac 1 {\|\tau^{-1}\|_{\ell_1}^2}=\frac{|\tau_1|^4}{\|\mu\cdot\tau\|_{\ell_1}^2}=\frac{(4\sqrt6)^4\pi^{-8}}{(48\sqrt6)^2\pi^{-8}}=\frac23,
\quad \text{ and } \quad B_\tau=B_\lambda=\|\tau\|_{\ell_1}^2=\frac32 .$$
Theorem \ref{MixedZystems} with $k_0=1$ now yields $A=\min\{1,\frac23\}=\frac23$ and $B=\max\{1,\frac32\}=\frac32$.
\end{proof}

\textbf{Acknowledgement:}
The first author thanks Christopher Sorg for some valuable comments when preparing this manuscript. 

{\bf Declaration of generative AI use:} Claude Opus 5.5.

\bibliographystyle{alpha}

\bibliography{bibliography}


\end{document}